\newtheorem{theorem}{Theorem}[section]
\newtheorem{corollary}[theorem]{Corollary}
\newtheorem{lemma}[theorem]{Lemma}
\newtheorem{definition-theorem}[theorem]{Definition-Theorem}
\newtheorem{proposition}[theorem]{Proposition}
\theoremstyle{definition}
\newtheorem{definition}[theorem]{Definition}
\newtheorem{example}[theorem]{Example}
\newtheorem{question}[theorem]{Question}
\numberwithin{equation}{section}
\newcommand{\add}{\mathsf{add}\hspace{.01in}}
\newcommand{\Kb}{\mathsf{K}^{\rm b}}
\renewcommand{\mod}{\mathsf{mod}\hspace{.01in}}
\newcommand{\proj}{\mathsf{proj}\hspace{.01in}}
\newcommand{\thick}{\mathsf{thick}\hspace{.01in}}
\newcommand{\Aut}{\operatorname{Aut}\nolimits}
\newcommand{\End}{\operatorname{End}\nolimits}
\newcommand{\Hom}{\operatorname{Hom}\nolimits}
\newcommand{\rad}{\operatorname{rad}\nolimits}
\newcommand{\soc}{\operatorname{soc}\nolimits}
\newcommand{\tilt}{\mbox{\rm tilt}\hspace{.01in}}
\newcommand{\silt}{\mbox{\rm silt}\hspace{.01in}}
\newcommand{\nsilt}{\mbox{\rm silt${}^\nu$}\hspace{.01in}}
\newcommand{\ssilt}[1]{{#1}\operatorname{-silt}}
\newcommand{\ttilt}[1]{{#1}\operatorname{-tilt}}
\newcommand{\xto}{\xrightarrow}
\newcommand{\kD}{\mathbb{D}}
\begin{document}
\title[Tilting-discrete triangulated algebras not silting-discrete]{Examples of tilting-discrete self-injective algebras which are not silting-discrete}
\author{Takahide Adachi}
\address{T.~Adachi: Faculty of Global and Science Studies, Yamaguchi University, 1677-1 Yoshida, Yamaguchi 753-8541, Japan}
\email{tadachi@yamaguchi-u.ac.jp}
\thanks{T.~Adachi is supported by JSPS KAKENHI Grant Number JP20K14291.}
\author{Ryoichi Kase}
\address{R.~Kase: Faculty of Informatics, Okayama University of Science, 1-1 Ridaicho, Kita-ku, Okayama 700-0005, Japan}
\email{r-kase@mis.ous.ac.jp}
\thanks{R.~Kase is is supported by JSPS KAKENHI Grant Number JP17K14169.}
\dedicatory{Dedicated to Professor Susumu Ariki on the occasion of his 60th birthday}
\subjclass[2020]{Primary 18G80, Secondly 16G60}
\keywords{silting objects, tilting objects, mutation}

\begin{abstract}
In this paper, we introduce the notion of $\nu$-stable silting-discrete algebras, which unify silting-discrete algebras and tilting-discrete self-injective algebras, where $\nu$ is a triangle auto-equivalence of the bounded homotopy category of finitely generated projective modules. Moreover, we give an example of tilting-discrete self-injective algebras which are not silting-discrete.
\end{abstract}
\maketitle

\section{Introduction}
The study of derived categories is considered as an important subject in various mathematics, for example, ring theory, representation theory, algebraic geometry and mathematical physics.
In the representation theory of algebras, since the equivalences of derived categories preserve many homological properties, it is a natural problem to determine the derived equivalence class of a given algebra.
It is a well-known result (\cite{Ri89}) that derived equivalences are controlled by tilting objects.
Hence the problem above is reduced to finding all tilting objects for an algebra.

Recently, mutation theory has been intensely studied in the representation theory of algebras.
Mutation is an operation to construct a new object from an original one by exchanging direct summands.
As a typical example, for a symmetric algebra, mutations of tilting objects are also tilting, known as Okuyama--Rickard complexes.
Unfortunately, for any algebra, the class of tilting objects is not necessarily closed under mutations.
Aihara--Iyama (\cite{AI12}) shows that mutations of silting objects are always silting objects, and hence  mutations make infinitely many silting objects from a given silting object.
Silting objects are introduced by Keller--Vossieck (\cite{KV88}) as a generalization of tilting objects in order  to study bounded $t$-structures on derived categories.

We may expect silting connectedness, that is, any two silting objects are obtained from each other by iterated mutation.
However, Aihara--Grant--Iyama and recently Dugas (\cite{Du21}) give examples of algebras which do not satisfy silting connectedness.
In \cite{Ai13}, Aihara introduce the notion of silting-discrete algebras, which gives a reasonable class of finite dimensional algebras satisfying silting connectedness. 
A finite dimensional algebra is called a \emph{silting-discrete algebra} if for each positive integer $d$, the set of isomorphism classes of basic $d$-term silting objects of the bounded homotopy category of finitely generated projective modules is finite.
As nice properties of silting-discrete algebras, bounded $t$-structures correspond bijectively with silting objects (\cite{KY14,AMY19}) and hence the stability space (in the sense of Bridgeland) of the bounded derived category is contractible (\cite{PSZ18,AMY19}).

As mentioned above, for any algebra, mutations of tilting objects are not necessarily tilting.
However, for a self-injective algebra, Chan--Koenig--Liu (\cite{CKL15}) introduce the notion of $\nu$-stable mutation and show that $\nu$-stable mutations of tilting objects are also tilting, where $\nu$ is a Nakayama functor.
It is shown (\cite{AM17}) that tilting-discrete self-injective algebras, which are a tilting analog of silting-discrete algebras, satisfy a property that any two tilting objects are obtained from each other by iterated $\nu$-stable mutation.

In this paper, we discuss a unification of silting-discrete algebras and tilting-discrete self-injective algebras.
Moreover, we give an example of tilting-discrete self-injective algebras that are not silting-discrete.
Let $A$ be a finite dimensional algebra and $\mathcal{T}:=\Kb(\proj A)$ the bounded homotopy category of finitely generated projective $A$-modules.
For a triangle auto-equivalence $\nu$ on $\mathcal{T}$,
we introduce the notion of $\nu$-stable silting-discrete algebras, that is, algebras with finitely many $d$-term $\nu$-stable silting objects of $\mathcal{T}$ for each $d>0$.
Remark that $\nu$-stable silting objects of $\mathcal{T}$ is a generalization of tilting objects for self-injective algebras (see Proposition \ref{prop:selfinjective-tilting}).
The following theorem is one of our main results, which is an analog of \cite[Theorem 1.2]{AM17}.

\begin{theorem}\label{thmA}
Let $A$ be a finite dimensional algebra and $\mathcal{T}:=\Kb(\proj A)$. 
Assume that $\mathcal{T}$ admits a triangle auto-equivalence $\nu$.
Then the following statements are equivalent:
\begin{itemize}
\item[(1)] $A$ is $\nu$-stable silting-discrete.
\item[(2)] For each object $M$ obtained by finite sequence of minimal $\nu$-stable mutations from $A$, the set of isomorphism classes of basic $\nu$-stable silting objects $N$ of $\mathcal{T}$ satisfying $M\ge N\ge M[1]$ is a finite set, where $X\ge Y$ means $\Hom_{\mathcal{T}}(X,Y[i])=0$ for all $i>0$.
\end{itemize}
\end{theorem}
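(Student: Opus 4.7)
The plan is to model the argument on \cite[Theorem 1.2]{AM17}, transferring from tilting objects over self-injective algebras to $\nu$-stable silting objects over an arbitrary finite-dimensional algebra.

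The direction (1) $\Rightarrow$ (2) should be the routine one. Any object $M$ produced from $A$ by $k$ successive minimal $\nu$-stable mutations lies in an interval of the form $A[-k] \geq M \geq A[k]$, so every $\nu$-stable silting $N$ satisfying $M \geq N \geq M[1]$ is $d$-term for some $d = d(M)$ depending only on the length of the mutation sequence. Hypothesis (1) then supplies only finitely many $d$-term $\nu$-stable silting objects, and the set in (2) is a subset of this finite collection.

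For the non-trivial direction (2) $\Rightarrow$ (1), I would induct on $d \geq 1$ to prove that the set
\[ \mathcal{S}_d := \{ N \text{ basic }\nu\text{-stable silting in } \mathcal{T} \mid A \geq N \geq A[d-1] \} \]
is finite and that every $N \in \mathcal{S}_d$ can be reached from $A$ by a finite sequence of minimal $\nu$-stable mutations. The base case $d = 1$ is exactly hypothesis (2) applied to $M = A$. For the inductive step, given $N \in \mathcal{S}_{d+1}$, the strategy is to construct a $\nu$-stable silting $M$ which is itself reachable from $A$ by a finite sequence of minimal $\nu$-stable mutations and which satisfies $M \geq N \geq M[1]$. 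Hypothesis (2) bounds the number of $N$ above each such $M$, while the inductive hypothesis bounds the number of possible intermediates $M$ (each lying in some $\mathcal{S}_{d'}$ with $d' \leq d$); the product of these two finite bounds controls $\mathcal{S}_{d+1}$.

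The main obstacle will be the construction of the intermediate $M$ together with an explicit mutation path, and the verification that $\nu$-stability is preserved at every step. In the non-$\nu$-stable prototype, one uses an iterated irreducible-mutation argument of Aihara--Mizuno which successively mutates $A$ in the direction of $N$ until the mutant lies in the one-shift interval around $N$. Adapting this here requires that each such mutation can be chosen to be minimal and $\nu$-stable, which should follow from the structural results on $\nu$-stable silting mutations established earlier in the paper. A secondary technical point is to formulate the inductive statement in sufficient generality; it is likely cleaner to induct on intervals of the form $L \geq N \geq L[d-1]$ for an arbitrary $\nu$-stable silting $L$ reachable from $A$ by minimal $\nu$-stable mutations, rather than fixing $L = A$, so that the construction of the intermediate $M$ closes the induction without extra bookkeeping.
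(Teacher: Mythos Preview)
Your proposal is correct and follows essentially the same route as the paper: the hard direction is proved (as Theorem~\ref{thm:equiv-nu-silting-discrete}) by iterating the key reduction step Lemma~\ref{lem:technical-lem}---exactly your anticipated ``main obstacle''---together with Proposition~\ref{prop:local-property-nu-mutation}(2) to certify that each intermediate mutant is reachable from $A$, yielding the finite union bound you describe. One minor simplification: under the paper's formal definition of $\nu$-stable silting-discrete (finiteness of the interval $[M[d],M]$ for \emph{every} $\nu$-stable silting $M$, not just $M=A$), the implication (1)$\Rightarrow$(2) is immediate and does not require bounding the term-length of a mutant.
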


Remark that Theorem \ref{thmA} is extended to the case of triangulated categories (see Theorem \ref{thm:equiv-nu-silting-discrete}).

For a symmetric algebra, all silting objects are tilting objects.
Hence tilting-discrete symmetric algebras are silting-discrete.
This result is generalized to weakly symmetric algebras as follows.

\begin{theorem}[Theorem \ref{thm:weakly-symmetric-silting-discrete}]\label{thmB}
Let $A$ be a weakly symmetric algebra and $\mathcal{T}:=\Kb(\proj A)$.
Let $\nu$ be a Nakayama functor.
Then the following statements are equivalent:
\begin{itemize}
\item[(1)] $A$ is silting-discrete.
\item[(2)] $A$ is $\nu$-stable silting-discrete.
\item[(3)] $A$ is tilting-discrete.
\end{itemize}
In this case, all silting objects are tilting.
\end{theorem}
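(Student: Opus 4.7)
The plan is to establish the final assertion---that every silting object of $\mathcal{T}=\Kb(\proj A)$ is tilting when $A$ is weakly symmetric---and to deduce the equivalences $(1)\Leftrightarrow(2)\Leftrightarrow(3)$ as an immediate consequence. Once silting equals tilting, Proposition \ref{prop:selfinjective-tilting} (which identifies tilting with $\nu$-stable silting in the self-injective case) forces the three classes of basic $d$-term objects counted in (1), (2), (3) to coincide, so the finiteness conditions match for every $d$.

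To prove every silting object is tilting, I would proceed in two steps. First, I show that any basic silting object $T$ is $\nu$-stable, i.e.\ $\nu T\cong T$. Since $A$ is weakly symmetric, the Nakayama permutation is trivial, so $\nu P_i\cong P_i$ for each indecomposable projective $P_i$, and hence $\nu$ acts as the identity on $K_0(\mathcal{T})$. Writing $T=\bigoplus T_i$ with the $T_i$ indecomposable, the auto-equivalence $\nu$ sends $T$ to the basic silting object $\nu T=\bigoplus\nu T_i$ with $[\nu T_i]=[T_i]$ in $K_0(\mathcal{T})$ for every $i$. Invoking the standard fact that basic silting objects of $\Kb(\proj A)$ are determined up to isomorphism by the unordered tuple of $g$-vectors of their indecomposable summands, I conclude $\nu T\cong T$. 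Second, since $A$ is self-injective, $\nu$ is the Serre functor of $\mathcal{T}$, and Serre duality gives
\[
D\Hom_{\mathcal{T}}(T,T[i])\cong \Hom_{\mathcal{T}}(T,\nu T[-i])\cong \Hom_{\mathcal{T}}(T,T[-i]).
\]
The silting hypothesis $\Hom_{\mathcal{T}}(T,T[i])=0$ for $i>0$ therefore forces $\Hom_{\mathcal{T}}(T,T[-i])=0$ for $i>0$ as well, and combined with $\thick T=\mathcal{T}$ this shows that $T$ is tilting.

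The main obstacle is the $g$-vector step upgrading $[\nu T_i]=[T_i]$ to $\nu T\cong T$; the cleanest route is to invoke the standard classification of basic silting objects by $g$-vectors. The remainder is bookkeeping: by the two steps above, silting, $\nu$-stable silting, and tilting coincide as classes of objects in $\mathcal{T}$, so their $d$-term parts coincide for every $d$, yielding $(1)\Leftrightarrow(2)\Leftrightarrow(3)$ together with the final sentence of the theorem.
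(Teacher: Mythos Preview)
Your strategy---proving unconditionally that every silting object over a weakly symmetric algebra is tilting, and then reading off $(1)\Leftrightarrow(2)\Leftrightarrow(3)$---would certainly yield the theorem if it worked, but the key step has a genuine gap. The ``standard fact'' you invoke, that basic silting objects of $\Kb(\proj A)$ are determined up to isomorphism by the multiset of $g$-vectors of their indecomposable summands, is \emph{false} beyond the two-term situation. For a counterexample take any local non-semisimple algebra $A$: then $A$ and $A[2]$ are non-isomorphic basic silting objects, each with a single indecomposable summand, and $[A]=[A[2]]$ in $K_0(\Kb(\proj A))\cong\mathbb{Z}$. The result you presumably have in mind is \cite[Theorem~5.5]{AIR14}, which is valid only for two-term silting complexes; there is no analogue for arbitrary silting objects. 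So from $[\nu T_i]=[T_i]$ alone you cannot conclude $\nu T\cong T$, and your argument does not go through as written. (The unconditional assertion you are aiming for is in fact established in \cite{AD21}, but by a different route.)

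The paper's proof does not attempt the unconditional claim and instead uses the discreteness hypothesis in an essential way. The implication $(1)\Rightarrow(2)$ is trivial. For $(2)\Rightarrow(1)$, given an arbitrary basic silting object $M$ one chooses $n$ with $A[-n]\ge M\ge A[n]$; since $A[-n]$ is weakly symmetric and the set of $\nu$-stable silting objects between $M$ and $A[-n]$ is finite by hypothesis $(2)$, Proposition~\ref{prop:local-property-nu-mutation}(2) under its assumption~(b) shows that $M$ is reached from $A[-n]$ by a finite chain of irreducible $\nu$-stable mutations. Such mutations preserve weak symmetry (Proposition~\ref{prop:wekaly-symmetric-mutation}), so $M$ is itself weakly symmetric and in particular $\nu$-stable. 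Hence $\silt\mathcal{T}=\silt^{\nu}\mathcal{T}$, which gives $(1)$; Proposition~\ref{prop:nu-silting-tilting} then shows every silting object is tilting, and the equivalence with $(3)$ follows from Proposition~\ref{prop:selfinjective-tilting}. The finiteness in $(2)$ is exactly what forces the mutation chain to terminate at $M$; without it the argument would not close.
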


Independently of the present work, the same result is obtained by August--Dugas \cite{AD21}.

In \cite{AM17}, it is shown that preprojective algebras of Dynkin type are tilting-discrete self-injective algebras.
As an application of Theorem \ref{thmB}, we show that, if $A$ is the preprojective algebra of one of Dynkin diagrams $\mathbf{D}_{2n}(n\ge 2)$, $\mathbf{E}_{7}$ and $\mathbf{E}_{8}$, then it is silting-discrete.
However, we do not know whether each tilting-discrete self-injective algebra is silting-discrete.
Now we propose a natural question.
\begin{question}
Is a tilting-discrete self-injective algebra always silting-discrete?
\end{question}

One of our aims of this paper is to give two counterexamples for the question above.
The first counterexample is as follows.

\begin{theorem}[Theorem \ref{thm:local-tilting}]
Let $A$ be a basic connected non-semisimple self-injective algebra over an algebraically field and let $\nu$ be its Nakayama functor. Assume that $A$ is $\nu$-cyclic.
Then there exists a self-injective algebra $\widetilde{A}$ such that 
\begin{itemize}
\item it is not silting-discrete,
\item $\{ \widetilde{A}[i]\mid i\in\mathbb{Z} \}$ coincides with the set of isomorphism classes of all basic tilting objects for $\widetilde{A}$.
In particular, $\widetilde{A}$ is tilting-discrete.
\end{itemize}
\end{theorem}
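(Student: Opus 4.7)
\medskip

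The plan is to first exhibit an explicit self-injective algebra $\widetilde{A}$ produced from the data $(A,\nu)$ whose underlying structure collapses the Nakayama cycle of $A$ into a single idempotent class, and then to analyse its tilting and silting classes separately. A natural candidate for $\widetilde{A}$ is a ``folded'' algebra such as a skew group construction $A\rtimes\langle\nu\rangle$, a specific quotient $eAe$ for an idempotent distinguished by the cyclic Nakayama orbit, or a tensor/extension algebra built from $A$ and its automorphism group generated by $\nu$. The $\nu$-cyclic hypothesis is exactly what allows this folding to produce a single-vertex (local) basic algebra, while $A$ being non-semisimple keeps $\widetilde{A}$ non-trivially Loewy-long. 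Self-injectivity of $\widetilde{A}$ would then be verified by exhibiting a Nakayama automorphism explicitly, or by describing $\widetilde{A}$ as a Frobenius extension of $A$ so that self-injectivity transfers.

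Once $\widetilde{A}$ is constructed as a \emph{basic connected local} self-injective algebra, the classification of tilting objects becomes essentially formal. Since $K_{0}(\Kb(\proj\widetilde{A}))\cong\mathbb{Z}$, every basic silting object has a unique indecomposable summand. For a candidate tilting complex $T$, the derived Morita condition $\End_{\mathcal{D}}(T)\cong\widetilde{A}$ together with indecomposability forces $T$ to be a stalk complex $\widetilde{A}[i]$; the key argument is that any two-sided tilting spread across multiple degrees would violate $\Hom_{\mathcal{T}}(T,T[j])=0$ for $j\neq 0$ once one tracks the Loewy data of $\widetilde{A}$ through the differential. This step uses the $\nu$-cyclic hypothesis only implicitly, via the fact that $\widetilde{A}$ is local.

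Non-silting-discreteness is then shown by producing infinitely many pairwise non-isomorphic two-term silting objects. Starting at $\widetilde{A}$ and iterating irreducible silting mutation at its unique indecomposable summand, one obtains an infinite sequence $\widetilde{A}=T_{0},T_{1},T_{2},\ldots$ of basic $2$-term silting objects in the interval $[\widetilde{A}[1],\widetilde{A}]$; these are pairwise non-isomorphic because their $g$-vectors (or equivalently, the ranks of the two projective terms of their indecomposable summands) strictly increase. The existence of such a non-stabilising mutation chain uses that $\widetilde{A}$, being non-semisimple and local with several non-trivial endomorphisms of its unique indecomposable projective (inherited from the non-semisimplicity of $A$), admits an infinite supply of rigid indecomposable two-term complexes $\widetilde{A}\xrightarrow{f}\widetilde{A}^{r}$.

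The main obstacle is the construction step: one must build $\widetilde{A}$ so that three incompatible-looking requirements hold simultaneously — it must be local (to kill the tilting class down to shifts), it must be self-injective (which is non-trivial for local algebras that are not already symmetric), and it must carry enough endomorphisms of its indecomposable projective to sustain an infinite mutation chain. The $\nu$-cyclic hypothesis is essential in reconciling these, since it gives a canonical action allowing the folding to preserve the Frobenius structure. Once the construction is in place, the remaining steps are largely forced by the rigidity of the local setting.
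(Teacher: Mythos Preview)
Your approach has a fatal structural error: a \emph{local} self-injective algebra is always silting-discrete. Indeed, if $\widetilde{A}$ is local with unique simple $S$, then any nonzero torsion class in $\mod\widetilde{A}$ contains $S$ (as a quotient of any nonzero module) and hence, by closure under extensions, contains every finite-length module; so the only torsion classes are $0$ and $\mod\widetilde{A}$, whence $\ssilt{2_{\widetilde{A}}}\Kb(\proj\widetilde{A})=\{\widetilde{A},\widetilde{A}[1]\}$ and all silting objects are shifts of $\widetilde{A}$. Your proposed infinite mutation chain therefore cannot exist: the unique irreducible left mutation of $\widetilde{A}$ at its only summand uses a minimal left $\add(0)$-approximation, giving simply $\widetilde{A}[1]$. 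The very feature you rely on to trivialise the tilting class (locality) simultaneously trivialises the silting class.

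The paper's construction avoids this trap by \emph{not} collapsing the Nakayama orbit. Instead it keeps the vertex set of $A$ intact and \emph{doubles the arrows}: from $A=KQ/I$ one forms $\widetilde{A}=K\widetilde{Q}/\widetilde{I}$ where $\widetilde{Q}$ has the same vertices but two copies $a^{+},a^{-}$ of each arrow $a$, with relations making the $(+)$- and $(-)$-parts orthogonal and identifying the two copies of the socle. Self-injectivity is checked by realising $\widetilde{A}$ as a quotient of a suitable subalgebra of $A\times A$, using a bilinear-form argument. The point is that $\widetilde{A}$ inherits the Nakayama permutation of $A$, so it is again $\nu_{\widetilde{A}}$-cyclic; the classification $\tilt\Kb(\proj\widetilde{A})=\{\widetilde{A}[i]\}$ then follows from the general fact (Proposition~\ref{prop:cyclic-silting}) that a $\nu$-cyclic silting object has no $\nu$-stable silting neighbours other than its shifts, combined with $\tilt=\silt^{\nu}$ for self-injective algebras. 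Non-silting-discreteness comes for free from the doubled arrows: $\widetilde{A}$ surjects onto a Kronecker path algebra, which is not $\tau$-tilting finite. So the mechanism is ``$\nu$-cyclic with a wild factor'', not ``local with rich endomorphisms''.
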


The second counterexample is as follows.
Let $n,m$ be positive integers and let $K$ be an algebraically closed field.
We denote by $A_{n,m}$ the stable Auslander algebra of a self-injective Nakayama $K$-algebra with $m$ simple modules (up to isomorphism) and Loewy length $n$.
It is known that $A_{n,m}$ is always a self-injective algebra.

\begin{theorem}[Theorem \ref{thm:counterexample-question}]
Let $n,m\ge 5$ be integers with $\gcd(n-1,m)=1$.
Assume that $n$ is odd and $m$ is not divisible by the characteristic of $K$.
Then $A_{n,m}$ is a tilting-discrete algebra but not silting-discrete.
\end{theorem}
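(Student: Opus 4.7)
The plan is to first show that the Nakayama automorphism of $A_{n,m}$ acts on the simples as a single cycle, use this together with Theorem \ref{thmA} to deduce tilting-discreteness with only the shifts $A_{n,m}[i]$ as tilting objects, and then to exhibit an infinite family of pairwise non-isomorphic $2$-term silting objects to refute silting-discreteness.

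First, I would record a quiver and relations of $A_{n,m}$, obtained from the stable Auslander--Reiten quiver of $\Lambda_{n,m}$; the indecomposable non-projective $\Lambda_{n,m}$-modules are indexed by $(i,\ell)\in\mathbb{Z}/m\mathbb{Z}\times\{1,\dots,n-1\}$, and $A_{n,m}$ is their stable endomorphism algebra. The hypothesis $\Char K\nmid m$ guarantees basicness of $A_{n,m}$. The Nakayama permutation $\pi$ on the simples of $A_{n,m}$ is then determined by composing the $\mathbb{Z}/(n-1)$-action arising from AR translation on the stable category with the $\mathbb{Z}/m$-action arising from the Nakayama permutation of $\Lambda_{n,m}$. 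A direct calculation, in which $n$ odd and $\gcd(n-1,m)=1$ are used precisely to rule out non-trivial invariant subsets, forces $\pi$ to be a single cycle of length $m(n-1)$, so that $A_{n,m}$ is $\nu$-cyclic.

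For tilting-discreteness, since $A_{n,m}$ is $\nu$-cyclic, for any basic tilting object $M$ derived equivalent to $A_{n,m}$ the Nakayama permutation of $\End(M)$ is again a single cycle (this structure being a derived invariant up to conjugation). Hence the only $\nu$-orbit-closed subsets of the simples of $\End(M)$ are $\emptyset$ and the whole set, which means that the only basic $\nu$-stable silting objects in the interval $[M[1],M]$ are $M$ itself and $M[1]$. Theorem \ref{thmA} then yields $\nu$-stable silting-discreteness of $A_{n,m}$, and Proposition \ref{prop:selfinjective-tilting} translates this into tilting-discreteness; moreover, the only basic tilting objects are the shifts $A_{n,m}[i]$ with $i\in\mathbb{Z}$.

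For failure of silting-discreteness, I would construct infinitely many pairwise non-isomorphic basic $2$-term silting objects in $[A_{n,m}[1],A_{n,m}]$. A single-vertex left silting mutation at any vertex $i$ of the quiver of $A_{n,m}$ already produces a silting object that is \emph{not} $\nu$-stable (so not tilting), since $\{i\}$ is not $\pi$-closed. The hard part, and the main obstacle of the whole proof, is to pick a vertex $i$ together with a carefully chosen neighbour $j$ such that the local shape of the Gabriel quiver of $A_{n,m}$ near $\{i,j\}$ contains a Kronecker-type subpattern with enough relations to allow iterated silting mutation at $i$ and $j$ to produce an infinite chain of $2$-term silting objects, and then to show that their $g$-vectors (computed from the minimal projective presentations of $S_i$ and $S_j$) sweep out pairwise distinct points of $\mathbb{Z}^2$ rather than cycling. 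By the standard bijection between basic $2$-term silting complexes and $g$-vectors, pairwise distinctness of the $g$-vectors gives pairwise non-isomorphism of the complexes, contradicting silting-discreteness at $d=2$. The hypotheses $n,m\ge 5$ are used precisely to ensure that such a Kronecker-like subpattern can be located in the quiver of $A_{n,m}$, while $\Char K\nmid m$ is used to make the explicit computation of the relevant $g$-vectors reliable.
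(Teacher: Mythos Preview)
There is a genuine gap: your central claim that $A_{n,m}$ is $\nu$-cyclic is false. In the paper's presentation of $A_{n,m}$ via the quiver $\mathbb{T}_{n,m}$ with vertex set $[1,n]\times\mathbb{Z}/m\mathbb{Z}$, one has $\nu P(i,r)\cong P(n-i+1,\,r+i-n)$ by Lemma~\ref{lem:comb-property}(3), and hence $\nu^{2}P(i,r)\cong P(i,\,r-(n-1))$. The $\nu$-orbit of $P(i,r)$ is therefore
\[
\mathcal{O}_{i}=\{(i,s):s\in\mathbb{Z}/m\mathbb{Z}\}\cup\{(n-i+1,s):s\in\mathbb{Z}/m\mathbb{Z}\},
\]
and there are $(n+1)/2$ such orbits, not one; for $n\ge 5$ this is at least three. (Your count of $m(n-1)$ simples also disagrees with the paper's $nm$, but even with your indexing the permutation you describe has more than one orbit.) Consequently Proposition~\ref{prop:cyclic-silting} does not apply, and the conclusion that the only basic tilting objects are the shifts $A_{n,m}[i]$ is wrong. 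Indeed, the whole point of this section, as opposed to Section~3, is that $A_{n,m}$ is a \emph{non-trivial} tilting-discrete example: there genuinely are irreducible $\nu$-stable mutations other than the shift, and tilting-discreteness is not automatic.

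The paper's actual argument for tilting-discreteness is substantially harder than what you outline. Finiteness of $\ttilt{2}A_{n,m}$ (Proposition~\ref{prop:card-silting-tilting}(2)) is obtained by realising $A_{n,m}$ as the skew group algebra $A_{n,1}\ast G_{m}$, showing that $\nu$-stability forces $\psi$-stability (Lemma~\ref{lem:two-stable-silt}), and then reducing via \cite{HZ16} to $\ssilt{2}^{g}A_{n,1}$, which is finite because the preprojective algebra $A_{n,1}$ of type $\mathbf{A}_{n}$ is $\tau$-tilting finite by \cite{Miz14}; this is where the hypothesis $\Char K\nmid m$ actually enters. To feed this into Corollary~\ref{cor:AM17-thm} one must check the same finiteness at \emph{every} iterated $\nu$-stable mutation, and the paper does this by a lengthy explicit computation (all of Subsection~4.6) establishing $\End_{\mathcal{T}}(\mu_{X_{\ell}}(A_{n,m}))\cong A_{n,m}$ for every minimal $\nu$-stable summand $X_{\ell}$. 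Your suggestion to replace this by a derived-invariance argument for the cycle structure of the Nakayama permutation does not suffice: what is needed is an honest $K$-algebra isomorphism so that Proposition~\ref{prop:card-silting-tilting}(2) can be re-applied. For non-silting-discreteness, the paper's route is also cleaner than your iterated-mutation sketch: one picks five vertices so that the idempotent subalgebra $eA_{n,m}e$ is the path algebra of type $\widetilde{\mathbf{D}}_{4}$, and then passes from $\tau$-tilting infiniteness of $eA_{n,m}e$ to that of $A_{n,m}$ via \cite{Ad16} and \cite{DIJ19}.
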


\subsection*{Notation}
Let $K$ be a field and $\kD:=\Hom_{K}(-,K)$.
Throughout this paper, $\mathcal{T}$ is a $K$-linear Hom-finite Krull-Schmidt triangulated category with shift functor $[1]$.
For an object $M$ of $\mathcal{T}$, we denote by $\add(M)$ the smallest full subcategory of $\mathcal{T}$ which contains $M$ and which is closed under taking finite direct sums and direct summands, and by $\thick M$ the smallest triangulated full subcategory of $\mathcal{T}$ which contains $M$ and which is closed under taking direct summands.
For full subcategories $\mathcal{X},\mathcal{Y}$ of $\mathcal{T}$, we define $\mathcal{X}\ast\mathcal{Y}$ as the full subcategory of $\mathcal{T}$ consisting of $T\in\mathcal{T}$ which admits a triangle $X\to T\to Y\to X[1]$ with $X\in\mathcal{X}$ and $Y\in\mathcal{Y}$.

\section{$\nu$-stable silting theory}
In this section, we introduce $\nu$-stable silting mutation theory, which unifies silting mutation theory (\cite{AI12}) and tilting mutation theory of self-injective algebras (\cite{CKL15}, \cite{AM17}), where $\nu$ is a triangle auto-equivalence.
Let $\mathcal{T}$ be a $K$-linear Hom-finite Krull-Schmidt triangulated category with shift functor $[1]$.
Assume that $\mathcal{T}$ has a triangle auto-equivalence $\nu:\mathcal{T} \to \mathcal{T}$.

\subsection{$\nu$-stable objects}
In this subsection, we recall the notion of $\nu$-stable objects, which plays an important role in this paper.

\begin{definition}
An object $M$ of $\mathcal{T}$ is said to be \emph{$\nu$-stable} if $\nu M \cong M$ holds.
\end{definition}

Let $M$ be a basic $\nu$-stable object of $\mathcal{T}$.
We decompose $M$ as $M=\oplus_{i\in I}M_{i}$, where $M_{i}$ is indecomposable.
Then for each $i\in I$, there uniquely exists $j\in I$ such that $\nu M_{i} \cong M_{j}$ because $\nu$ preserves indecomposability. 
Define a permutation $v_{M}:I\to I$ as $\nu M_{i}\cong M_{v_{M}(i)}$.
Now we introduce two classes of $\nu$-stable objects.

\begin{definition}
Let $M$ be a basic $\nu$-stable object of $\mathcal{T}$.
\begin{itemize}
\item[(1)] We call $M$ a \emph{weakly symmetric $\nu$-stable object} if $v_{M}$ is an identity map.
\item[(2)] We call $M$ a \emph{symmetric $\nu$-stable object} if the restriction $\nu|_{\add M}$ is functorial isomorphic to the identity functor.
\end{itemize}
\end{definition}

For simplicity, we omit the word ``$\nu$-stable'' in (weakly) symmetric $\nu$-stable objects.
Note that all symmetric objects are weakly symmetric.
Moreover, if $M$ is a symmetric object, then each object of $\thick M$ is $\nu$-stable.

Under the condition that $\nu$ is a Serre functor (i.e., there exists a bifunctorial isomorphism
\begin{align}\label{seq:serre}
\Hom_{\mathcal{T}}(X,Y)\cong\kD\Hom_{\mathcal{T}}(Y,\nu X)
\end{align}
for each $X,Y\in\mathcal{T}$), we obtain the following result.

\begin{proposition}\label{prop:object-algebra}
Assume that $\nu$ is a Serre functor.
If $M$ is a basic $\nu$-stable $($respectively, weakly symmetric, symmetric$)$ object, then $\End_{\mathcal{T}}(M)$ is a self-injective $($respectively, weakly symmetric, symmetric$)$ algebra. 
\end{proposition}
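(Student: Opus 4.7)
The plan is to set $A := \End_{\mathcal{T}}(M)$ with the decomposition $M = \bigoplus_{i \in I} M_i$ into indecomposables and primitive idempotents $e_i \in A$, and to identify the indecomposable projective right $A$-modules with $e_iA \cong \Hom_{\mathcal{T}}(M,M_i)$ and the indecomposable projective left $A$-modules with $Ae_i \cong \Hom_{\mathcal{T}}(M_i,M)$. Then each of the three assertions is extracted from Serre duality \eqref{seq:serre} applied to appropriate indecomposable summands, with care taken about bimodule structures.

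For the self-injective case, I would apply Serre duality to $X = Y = M$ to get $A = \Hom_{\mathcal{T}}(M,M) \cong \kD\Hom_{\mathcal{T}}(M,\nu M)$. Fixing an isomorphism $\phi\colon \nu M \xrightarrow{\sim} M$, postcomposition by $\phi$ is right $A$-linear (the right action being precomposition by endomorphisms of $M$), so it induces $\Hom_{\mathcal{T}}(M,\nu M) \cong A$ as right $A$-modules. Dualizing gives $A \cong \kD A$ as right $A$-modules, which is precisely the defining property of a self-injective algebra.

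For the weakly symmetric case, I would apply Serre duality summand-wise: $\Hom_{\mathcal{T}}(M_i,M) \cong \kD\Hom_{\mathcal{T}}(M,\nu M_i) \cong \kD\Hom_{\mathcal{T}}(M,M_{v_M(i)})$, i.e.\ $Ae_i \cong \kD(e_{v_M(i)}A)$ as left $A$-modules. This identifies the Nakayama permutation of $A$ with $v_M$. The assumption $v_M = \mathrm{id}$ then gives $Ae_i \cong \kD(e_iA)$ for each $i$, which is the definition of weakly symmetric.

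The remaining, and subtlest, assertion is the symmetric one, which is the main obstacle because it requires the Serre-duality isomorphism of Step~1 to be upgraded from a right $A$-module isomorphism to an $A$-$A$-bimodule isomorphism $A \cong \kD A$. The plan is to use the natural isomorphism $\eta \colon \mathrm{id}_{\add M} \xrightarrow{\sim} \nu|_{\add M}$ provided by the symmetric hypothesis: naturality at any $a \in A = \End_{\mathcal{T}}(M)$ gives $\nu(a) \circ \eta_M = \eta_M \circ a$. Taking $\phi := \eta_M^{-1}$ as the identifying isomorphism, the map $\phi_*\colon \Hom_{\mathcal{T}}(M,\nu M) \to A$ becomes left $A$-linear (where the left action on the source is transported from $\End_{\mathcal{T}}(\nu M)$ via the functor $\nu$), since the potential conjugation twist $a \mapsto \eta_M^{-1} \nu(a) \eta_M$ collapses to the identity by naturality. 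Hence the Serre duality isomorphism is bimodule-equivariant, yielding $A \cong \kD A$ as $A$-$A$-bimodules, which is the definition of a symmetric algebra. The key bookkeeping throughout is tracking which $A$-actions are via precomposition versus postcomposition, and how $\nu$ transports the left action across the identification $\End_{\mathcal{T}}(M) \cong \End_{\mathcal{T}}(\nu M)$.
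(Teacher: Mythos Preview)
Your proposal is correct and follows essentially the same approach as the paper's proof: both use Serre duality applied to $M$ (and its summands) together with the $\nu$-stability hypotheses to produce the required one-sided (respectively bimodule) isomorphism $A \cong \kD A$. The paper's argument is considerably terser---it simply asserts each isomorphism with the appropriate module structure---whereas you spell out why the identifications are $A$-linear, in particular using the naturality of $\eta$ in the symmetric case to kill the conjugation twist; this extra care is exactly what justifies the paper's one-line claim there. One cosmetic remark: in your self-injective step, the chain of isomorphisms you describe actually yields $A \cong \kD A$ as \emph{left} $A$-modules (Serre duality is left $A$-linear via naturality in $Y$, and dualizing your right-linear $\phi_*$ gives a left-linear map), not right; this is harmless since self-injectivity is left--right symmetric for finite-dimensional algebras, but you may want to align the side with the rest of the argument.
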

\begin{proof}
Let $M$ be a $\nu$-stable object of $\mathcal{T}$.
By \eqref{seq:serre}, we have $\End_{\mathcal{T}}(M)\cong \kD\End_{\mathcal{T}}(M)$ as a left $\End_{\mathcal{T}}(M)$-module.
Hence $\End_{\mathcal{T}}(M)$ is self-injective.
Next, we assume that $M$ is weakly symmetric. Let $M_{i}$ be an indecomposable direct summand of $M$.
Then we obtain $\Hom_{\mathcal{T}}(M_{i},M)\cong \kD\Hom_{\mathcal{T}}(M,\nu M_{i})\cong \kD\Hom_{\mathcal{T}}(M,M_{v_{M}(i)})\cong \kD\Hom_{\mathcal{T}}(M,M_{i})$ as a left $\End_{\mathcal{T}}(M)$-module.
Therefore $\End_{\mathcal{T}}(M)$ is weakly symmetric.
Finally, we assume that $M$ is symmetric.
Since $\nu|_{\add M}$ is functorial isomorphic to the identity functor, we have $\End_{\mathcal{T}}(M)\cong \kD\End_{\mathcal{T}}(M)$ as an $\End_{\mathcal{T}}(M)$-$\End_{\mathcal{T}}(M)$-bimodule.
Consequently, $\End_{\mathcal{T}}(M)$ is symmetric.
\end{proof}

\subsection{$\nu$-stable silting objects}
We start this subsection with recalling the definition of silting objects.

\begin{definition}
An object $M$ of $\mathcal{T}$ is called a \emph{silting} (respectively, \emph{tilting}) \emph{object} of $\mathcal{T}$ if $\mathcal{T}=\thick M$ and $\Hom_{\mathcal{T}}(M,M[i])=0$ for all $i>0$ (respectively, $i\neq 0$).
We denote by $\silt \mathcal{T}$ (respectively, $\tilt\mathcal{T}$, $\silt^{\nu}\mathcal{T}$) the set of isomorphism classes of basic silting (respectively, tilting, $\nu$-stable silting) objects of $\mathcal{T}$.
\end{definition}

Recall the partial order on $\silt\mathcal{T}$.
For objects $M,N$ of $\mathcal{T}$, we write $M \ge N$ if $\Hom_{\mathcal{T}}(M,N[i])=0$ for all $i>0$.
Then $(\silt\mathcal{T}, \ge)$ is a partially ordered set by \cite[Theorem 2.11]{AI12}.
Moreover, by the restriction, $\ge$ gives a partial order on $\silt^{\nu}\mathcal{T}$.
For each $M\in\silt\mathcal{T}$ and $d\in\mathbb{Z}_{\ge 0}$, let $\ssilt{(d+1)_{M}}\mathcal{T} :=\{ N\in \silt\mathcal{T} \mid M \ge N \ge M[d]\}$. 
Note that, for $M,N\in \silt\mathcal{T}$, $M\geq N \geq M[d]$ if and only if $N\in \add M\ast\add M[1]\ast \cdots \ast\add M[d]$ (for example, see \cite[Lemma 3.6]{AMY19}).

In \cite{ANR13} and also \cite[Theorem A.4]{Ai13}, it is shown that, for a finite dimensional self-injective algebra $A$ over an algebraically closed field, 
all $\nu$-stable silting objects of the bounded homotopy category $\Kb(\proj A)$ are tilting objects, where $\nu:=\kD\Hom_{A}(-,A)$ is a Serre functor.
Moreover the converse also holds.
We discuss an analog of their result.   

\begin{proposition}\label{prop:nu-silting-tilting}
Assume that $\nu$ is a Serre functor.
Then all $\nu$-stable silting objects of $\mathcal{T}$ are tilting.
\end{proposition}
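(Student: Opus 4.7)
The plan is to show that for a $\nu$-stable silting object $M$, the negative self-extensions $\Hom_{\mathcal{T}}(M,M[i])$ with $i<0$ also vanish; the positive-degree vanishing and the thickness condition $\thick M = \mathcal{T}$ are already built into the definition of silting, so this is all that is needed to promote $M$ to a tilting object.

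First I would fix $M \in \silt^\nu \mathcal{T}$ and an integer $i > 0$, and compute $\Hom_{\mathcal{T}}(M, M[-i])$ using the Serre duality isomorphism \eqref{seq:serre}. Concretely,
\[
\Hom_{\mathcal{T}}(M, M[-i]) \;\cong\; \kD \Hom_{\mathcal{T}}(M[-i], \nu M).
\]
Next I would use the $\nu$-stability assumption $\nu M \cong M$ to replace $\nu M$ by $M$, and then shift to obtain
\[
\kD \Hom_{\mathcal{T}}(M[-i], M) \;\cong\; \kD \Hom_{\mathcal{T}}(M, M[i]).
\]
Since $M$ is silting, $\Hom_{\mathcal{T}}(M, M[i]) = 0$ for $i>0$, so the dual vanishes as well, giving $\Hom_{\mathcal{T}}(M, M[-i]) = 0$ for all $i>0$.

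Combined with the silting hypothesis (which gives vanishing in positive degrees and $\thick M = \mathcal{T}$), this yields $\Hom_{\mathcal{T}}(M,M[i]) = 0$ for all $i \neq 0$, so $M$ is tilting. There is no real obstacle here: the argument is a one-line application of Serre duality together with the defining property $\nu M \cong M$. The only point worth double-checking is that the Serre duality isomorphism is natural enough to be applied after the shift $[-i]$, which is automatic since $[1]$ is a triangle auto-equivalence commuting with $\nu$.
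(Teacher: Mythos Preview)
Your proof is correct and matches the paper's argument essentially verbatim: both reduce to showing the negative-degree self-extensions vanish, apply Serre duality together with $\nu M\cong M$ to identify $\Hom_{\mathcal{T}}(M,M[-i])$ with $\kD\Hom_{\mathcal{T}}(M,M[i])$, and then invoke the silting hypothesis. The only cosmetic difference is that the paper writes the chain of isomorphisms for a general integer $i$ and concludes for $i<0$, whereas you fix $i>0$ and look at $M[-i]$.
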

\begin{proof}
Let $M$ be a $\nu$-stable silting object of $\mathcal{T}$.
It is enough to show that $\Hom_{\mathcal{T}}(M,M[i])=0$ for all $i<0$.
For each integer $i$, we have isomorphisms 
\begin{align}
\Hom_{\mathcal{T}}(M,M[i])\cong \kD\Hom_{\mathcal{T}}(M[i],\nu M)\cong \kD\Hom_{\mathcal{T}}(M[i],M) \cong \kD\Hom_{\mathcal{T}}(M,M[-i]). \notag
\end{align}
Since $M$ is silting, we obtain $\Hom_{\mathcal{T}}(M,M[i])=0$ for each negative integer $i$.
\end{proof}

Note that the converse in Proposition \ref{prop:nu-silting-tilting} does not necessarily hold.
Indeed, we give a characterization of algebras that all tilting objects are $\nu$-stable silting.
By the characterization, non-semisimple hereditary algebras have a tilting object which is not $\nu$-stable.
Recall that, by \cite[Corollary 3.9]{Che11}, a finite dimensional algebra $A$ is an Iwanaga--Gorenstein algebra if and only if the bounded homotopy category $\Kb(\proj A)$ has a Serre functor $\nu$.

\begin{proposition}\label{prop:selfinjective-tilting}
Let $A$ be a finite dimensional Iwanaga--Gorenstein algebra over an algebraically closed field and $\nu$ the Serre functor.
Then the following statements are equivalent.
\begin{itemize}
\item[(1)] $A$ is self-injective.
\item[(2)] All tilting objects of $\Kb(\proj A)$ are $\nu$-stable.
\item[(3)] $A$ is a $\nu$-stable object of $\Kb(\proj A)$.
\item[(4)] $\Kb(\proj A)$ has a $\nu$-stable silting object.
\end{itemize}
\end{proposition}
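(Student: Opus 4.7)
The plan is to establish the cycle (1)$\Rightarrow$(2)$\Rightarrow$(3)$\Rightarrow$(4)$\Rightarrow$(1), leveraging $A$ itself as a test object. The elementary observation is that $\nu A=\kD\Hom_{A}(A,A)=\kD A$ as an object of $\Kb(\proj A)$ (in degree zero), and that $A$ is always a tilting object of $\Kb(\proj A)$, since $\Hom_{\Kb(\proj A)}(A,A[i])=0$ for $i\ne 0$ and $\thick A=\Kb(\proj A)$. Thus (2)$\Rightarrow$(3) follows by applying (2) to $A$, and (3)$\Rightarrow$(4) follows because $A$ is silting. For (1)$\Rightarrow$(2) I would quote the theorem of \cite{ANR13} and \cite[Theorem A.4]{Ai13} recalled before the proposition: over an algebraically closed field, tilting and $\nu$-stable silting objects of $\Kb(\proj A)$ coincide when $A$ is self-injective.

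For the auxiliary direction (3)$\Rightarrow$(1), note that $\nu A\cong A$ reads $\kD A\cong A$ as $A$-modules; since $\kD A$ is always injective, $A$ is itself injective as a left module, hence self-injective. This is essentially the Frobenius characterisation of basic self-injective algebras.

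The core of the proof is (4)$\Rightarrow$(1). Given a $\nu$-stable silting object $M$, Proposition \ref{prop:nu-silting-tilting} upgrades $M$ to a tilting object and Proposition \ref{prop:object-algebra} shows that $B:=\End_{\mathcal{T}}(M)$ is self-injective. By Rickard's theorem, the tilting object $M$ produces a derived equivalence $F\colon \Db(\mod B)\xrightarrow{\sim}\Db(\mod A)$ with $F(B)\cong M$, and since triangle equivalences preserve perfect complexes this restricts to $\Kb(\proj B)\xrightarrow{\sim}\Kb(\proj A)$. Because both algebras are Iwanaga--Gorenstein (the $B$ side following from self-injectivity), Serre functors exist on both homotopy categories and are intertwined by the equivalence, $\nu_{A}\circ F\cong F\circ \nu_{B}$. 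Setting $T:=F^{-1}(A)$, which is tilting in $\Kb(\proj B)$, the already-established case (1)$\Rightarrow$(2) applied to the self-injective $B$ yields $\nu_{B}T\cong T$; transporting this across $F$ gives $\nu_{A}A\cong A$, which is (3) for $A$, and (1) then follows from the previous paragraph. The most delicate point is precisely this last chain: it relies on the compatibility of $F$ with Serre functors and on bootstrapping from the self-injective $B$. An alternative route would cite directly that self-injectivity is a derived invariant, but the approach above keeps the proof internal to the results developed in this section.
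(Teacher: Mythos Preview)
Your proof is correct and follows the same overall cycle as the paper, with (1)$\Rightarrow$(2), (2)$\Rightarrow$(3), and (3)$\Rightarrow$(4) handled identically. The difference lies in (4)$\Rightarrow$(1). The paper, having obtained that $B=\End_{\mathcal{T}}(M)$ is self-injective and derived equivalent to $A$, simply invokes \cite[Theorem 2.1]{ANR13}, which says that self-injectivity is preserved under derived equivalence. You instead give an internal argument: use uniqueness of Serre functors to intertwine $\nu_{A}$ and $\nu_{B}$ across the Rickard equivalence $F$, apply the already-proved implication (1)$\Rightarrow$(2) on the $B$-side to the tilting object $F^{-1}(A)$, transport the resulting $\nu_{B}$-stability back to obtain $\nu_{A}A\cong A$, and finish via your auxiliary (3)$\Rightarrow$(1) (which the paper does not isolate). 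Your route is longer but has the virtue of being self-contained within the machinery of the section, effectively re-proving the relevant special case of \cite[Theorem 2.1]{ANR13}; the paper's route is shorter but relies on that external citation.
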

\begin{proof}
(1)$\Rightarrow$(2) follows from \cite[Theorem A.4]{Ai13}.
(2)$\Rightarrow$(3)$\Rightarrow$(4) is clear.
We show (4)$\Rightarrow$(1). Let $T$ be a $\nu$-stable silting object of $\Kb(\proj A)$. 
Then $B:=\End_{\Kb(\proj A)}(T)$ is a self-injective algebra by Proposition \ref{prop:object-algebra}.
Since $T$ is a tilting object of $\Kb(\proj A)$ by Proposition \ref{prop:nu-silting-tilting}, $B$ is derived equivalent to $A$. 
Hence the assertion follows from \cite[Theorem 2.1]{ANR13}.
\end{proof}
 
As an application of Proposition \ref{prop:nu-silting-tilting}, we have the following corollary.
\begin{corollary}
Assume that $\nu$ is a Serre functor.
Let $M$ be a symmetric object of $\mathcal{T}$.
Then all silting objects of $\thick M$ are tilting objects of $\thick M$.
\end{corollary}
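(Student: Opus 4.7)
The plan is to reduce the statement to Proposition \ref{prop:nu-silting-tilting} applied to the triangulated subcategory $\thick M$ in place of $\mathcal{T}$. First I would observe that since $M$ is symmetric we have $\nu M \cong M$, and because $\nu$ is a triangle auto-equivalence of $\mathcal{T}$ it sends $\thick M$ to $\thick(\nu M) = \thick M$. Thus $\nu$ restricts to a triangle auto-equivalence $\nu|_{\thick M}$ of $\thick M$.

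Next I would check that this restriction remains a Serre functor on $\thick M$. Since $\thick M$ is a full triangulated subcategory of the $K$-linear Hom-finite Krull--Schmidt category $\mathcal{T}$, it inherits Hom-finiteness and the Krull--Schmidt property, and the bifunctorial Serre duality isomorphism \eqref{seq:serre} restricts directly to objects of $\thick M$. Hence $\nu|_{\thick M}$ is a Serre functor on $\thick M$.

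Now let $N \in \silt(\thick M)$. By the remark following the definition of symmetric object in Section~2.1, every object of $\thick M$ is $\nu$-stable, so in particular $N$ is a $\nu|_{\thick M}$-stable silting object of $\thick M$. Applying Proposition \ref{prop:nu-silting-tilting} inside $\thick M$ with its Serre functor $\nu|_{\thick M}$, we conclude that $\Hom_{\mathcal{T}}(N,N[i])=0$ for all $i\neq 0$, i.e.\ $N$ is a tilting object of $\thick M$.

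The only step requiring any care is confirming that the ambient Serre property of $\nu$ descends to the subcategory $\thick M$, but this is immediate from the Hom-based formulation of Serre duality combined with the fact that $\thick M$ is a full subcategory stable under $\nu$; so I do not expect a genuine obstacle, and the corollary follows essentially as a direct specialization of Proposition \ref{prop:nu-silting-tilting}.
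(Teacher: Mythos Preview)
Your proposal is correct and follows essentially the same approach as the paper: the paper's proof simply notes that every object of $\thick M$ is $\nu$-stable (since $M$ is symmetric) and then invokes Proposition~\ref{prop:nu-silting-tilting}. You have merely made explicit the routine verification that $\nu$ restricts to a Serre functor on the full subcategory $\thick M$, which the paper leaves implicit.
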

\begin{proof}
Since $M$ is symmetric, all objects in $\thick M$ are $\nu$-stable.
Hence the assertion follows from Proposition \ref{prop:nu-silting-tilting}.
\end{proof}

\subsection{$\nu$-stable mutations}
Let us start this subsection by recalling the notion of $\nu$-stable mutations.
For basic results on mutations of silting objects, we refer to \cite{AI12}. 
In this subsection, we do not necessarily assume that $\nu$ is a Serre functor.

Recall the definition of minimal left approximations.
Let $f: X\to Z$ be a morphism.
We say that $f$ is \emph{left minimal} if each $h\in \End_{\mathcal{T}}(Z)$ with $hf=f$ is an isomorphism.
Let $N$ be an object of $\mathcal{T}$.
We call $f$ a \emph{left $\add N$-approximation} of $X$ if $Z\in\add N$ and $\Hom_{\mathcal{T}}(f,N)$ is surjective.
A left $\add N$-approximation $f$ is said to be \emph{minimal} if it is left minimal.
Dually, we define a right minimal morphism, a right $\add N$-approximation and a minimal right $\add N$-approximation. 
We collect some results for approximations. 
The following lemma is a basic result in mutation theory.

\begin{lemma}\label{lem:approximation-result}
Let $N$ be an object of $\mathcal{T}$ with $\Hom_{\mathcal{T}}(N,N[1])=0$.
Let 
\begin{align}
X\xto{f}Y\xto{g}Z\to X[1] \notag
\end{align}
be a non-split triangle.
Then the following statements are equivalent.
\begin{itemize}
\item[(1)] $X$ is indecomposable, $f$ is a minimal left $\add N$-approximation and $\Hom_{\mathcal{T}}(N,X[1])=0$. 
\item[(2)] $Z$ is indecomposable, $g$ is a minimal right $\add N$-approximation and $\Hom_{\mathcal{T}}(Z,N[1])=0$.
\end{itemize}
\end{lemma}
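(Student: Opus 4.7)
The plan is to prove $(1)\Rightarrow(2)$; the converse follows by the analogous dual argument, swapping the roles of left and right, of $f$ and $g$, and of $X$ and $Z$. I would separate the proof into three ingredients, each consuming one piece of hypothesis (1).

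First, applying $\Hom_{\mathcal{T}}(N,-)$ to the triangle together with $\Hom_{\mathcal{T}}(N,X[1])=0$ makes $\Hom_{\mathcal{T}}(N,g)$ surjective, so $g$ is a right $\add N$-approximation of $Z$. Dually, applying $\Hom_{\mathcal{T}}(-,N)$ and combining the surjectivity of $\Hom_{\mathcal{T}}(f,N)$ (from the left-approximation property of $f$) with $\Hom_{\mathcal{T}}(Y,N[1])=0$ (which holds since $Y\in\add N$ and $\Hom_{\mathcal{T}}(N,N[1])=0$) yields $\Hom_{\mathcal{T}}(Z,N[1])=0$ by exactness.

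For the right minimality of $g$, I would pick a minimal right $\add N$-approximation $g':Y'\to Z$. By the previous step, $g\cong g'\oplus 0$ on a decomposition $Y=Y'\oplus Y_0$ with $Y_0\in\add N$, so the triangle completing $g$ is isomorphic to the direct sum of the triangle $X'\xto{f'}Y'\xto{g'}Z\xto{h'}X'[1]$ and the trivial triangle $Y_0\xto{\mathrm{id}}Y_0\to 0\to Y_0[1]$. Comparing first terms forces $X\cong X'\oplus Y_0$; since $X$ is indecomposable, either $Y_0=0$ (and $g$ is already minimal) or $X'=0$. In the latter case $g'$ is an isomorphism, whence $Z\cong Y'\in\add N$ and $X\cong Y_0\in\add N$, so that $h=0$ (as $\Hom_{\mathcal{T}}(Z,X[1])=0$ using $\Hom_{\mathcal{T}}(N,N[1])=0$), contradicting the non-splitness of the triangle. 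For indecomposability of $Z$, suppose $Z=Z_1\oplus Z_2$ nontrivially, take minimal right $\add N$-approximations $\tilde g_i:\tilde Y_i\to Z_i$, and observe that $\tilde g_1\oplus\tilde g_2$ is a minimal right $\add N$-approximation of $Z$; by uniqueness $g\cong\tilde g_1\oplus\tilde g_2$, and the triangle completing $g$ splits as the direct sum of the triangles $\tilde X_i\xto{\tilde f_i}\tilde Y_i\xto{\tilde g_i}Z_i\to\tilde X_i[1]$, giving $X\cong\tilde X_1\oplus\tilde X_2$. Indecomposability of $X$ forces, say, $\tilde X_1=0$, whence $\tilde Y_1\cong Z_1$ is a nonzero summand of $Y$ on which the relevant component of $f$ vanishes, contradicting the left minimality of $f$.

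The hard part is this third step: recognizing how the Krull--Schmidt direct-sum structures of $g$ (from either its non-minimality or a nontrivial decomposition of $Z$) lift to direct-sum decompositions of the completing triangle, so that the indecomposability of $X$ and the left minimality of $f$ can be played against each other. The first two steps are routine long-exact-sequence bookkeeping.
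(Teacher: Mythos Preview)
Your proof is correct. The paper does not actually give a proof of this lemma; it is stated there as ``a basic result in mutation theory'' and left without argument, so there is nothing to compare your approach against. Your three-step breakdown (the long exact sequence arguments for $\Hom_{\mathcal{T}}(Z,N[1])=0$ and the right-approximation property of $g$; the minimality of $g$ via splitting off the non-minimal part and using indecomposability of $X$; the indecomposability of $Z$ via decomposing the triangle and invoking left minimality of $f$) is the standard way to prove this. One small point worth making explicit in the last step: the assertion that $\tilde g_1\oplus\tilde g_2$ is again a \emph{minimal} right $\add N$-approximation is true but not entirely trivial---if $(\tilde g_1\oplus\tilde g_2)\phi=\tilde g_1\oplus\tilde g_2$ with $\phi=\left(\begin{smallmatrix}a&b\\c&d\end{smallmatrix}\right)$, then $a,d$ are isomorphisms by minimality of $\tilde g_i$, and $g_1(a-bd^{-1}c)=g_1$ forces $a-bd^{-1}c$ to be an isomorphism too, whence $\phi$ factors as a product of invertible elementary matrices. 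Everything else is routine.
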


We have an easy observation for left minimal approximations.

\begin{lemma}\label{lem:isom-triang}
Let $N$ be an object of $\mathcal{T}$. Let $X\xto{f}Y\to Z\to X[1]$ and $X'\xto{f'}Y'\to Z'\to X'[1]$ be triangles with $f,f'$ minimal left $\add N$-approximations.
For an isomorphism $\varphi:X\to X'$, there exist isomorphisms $\varphi':Y\to Y'$ and $\varphi'':Z\to Z'$ such that the following diagram commutes:
\begin{align}
\xymatrix{
X\ar[r]^-{f}\ar[d]_-{\varphi}^-{\cong}&Y\ar[r]\ar[d]_-{\varphi'}^-{\cong}&Z\ar[r]\ar[d]_-{\varphi''}^-{\cong}&X[1]\ar[d]^-{\cong}\\
X'\ar[r]^-{f'}&Y'\ar[r]&Z'\ar[r]& X'[1].
}\notag
\end{align}
Moreover, if $N$ and $X$ are $\nu$-stable, then so are $Y$ and $Z$. 
\end{lemma}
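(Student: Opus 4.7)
The plan is to first build the comparison morphism of triangles using the universal property of left approximations, then promote the middle and right arrows to isomorphisms, and finally derive the $\nu$-stability statement by applying the auto-equivalence $\nu$ and reusing the first part of the lemma.

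For the first part, I would start by factoring $f'\circ\varphi\colon X\to Y'$ through $f$. Since $f$ is a left $\add N$-approximation and $Y'\in\add N$, the map $\Hom_{\mathcal{T}}(f,Y')$ is surjective, so there is $\varphi'\colon Y\to Y'$ with $\varphi'\circ f=f'\circ\varphi$. By the axiom (TR3) applied to the commutative square, this extends to a morphism of triangles, producing $\varphi''\colon Z\to Z'$ filling in the diagram. To show $\varphi'$ is an isomorphism, I would run the same argument backwards using $\varphi^{-1}\colon X'\to X$ to obtain $\psi'\colon Y'\to Y$ with $\psi'\circ f'=f\circ\varphi^{-1}$. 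Composing, $(\psi'\circ\varphi')\circ f=\psi'\circ f'\circ\varphi=f\circ\varphi^{-1}\circ\varphi=f$, so by left minimality of $f$ the endomorphism $\psi'\circ\varphi'$ of $Y$ is an isomorphism; symmetrically $\varphi'\circ\psi'$ is an isomorphism, hence so is $\varphi'$. Once two of the three vertical maps in a morphism of triangles are isomorphisms, the third is too (standard consequence of the $5$-lemma in triangulated categories applied to the long exact sequence of $\Hom$'s), so $\varphi''$ is an isomorphism as well.

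For the $\nu$-stability claim, suppose $N$ and $X$ are $\nu$-stable. Applying the triangle auto-equivalence $\nu$ to $X\xto{f}Y\to Z\to X[1]$ gives a new triangle $\nu X\xto{\nu f}\nu Y\to \nu Z\to \nu X[1]$. Because $\nu$ is an equivalence and $\nu N\cong N$, the equality $\add(\nu N)=\add N$ holds, so $\nu Y\in\add N$ and $\nu f$ remains a minimal left $\add N$-approximation (both the approximation property and left-minimality are categorical and thus transported by $\nu$). Fixing an isomorphism $\varphi\colon X\xto{\cong}\nu X$, I then invoke the first part of the lemma with $(X',Y',Z',f')=(\nu X,\nu Y,\nu Z,\nu f)$ to obtain isomorphisms $Y\cong\nu Y$ and $Z\cong\nu Z$, proving that $Y$ and $Z$ are $\nu$-stable.

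The only genuinely delicate point is the very first factorization, where one must check that $\varphi'$ exists at all; after that, the rigidity of minimal approximations does all the work, and the $\nu$-stable clause reduces cleanly to the first half of the lemma. I do not anticipate any serious obstacle—this is essentially the standard "uniqueness up to isomorphism of minimal approximations" argument, packaged to also track $\nu$.
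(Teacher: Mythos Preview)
Your proposal is correct and follows essentially the same approach as the paper: the paper dismisses the first assertion as ``basic properties of triangulated categories and minimal left approximations'' (exactly the factor-through-$f$, minimality, and TR3/$5$-lemma argument you spell out), and for the second assertion it also observes that $\nu f$ is again a minimal left $\add N$-approximation and invokes the first part. The only difference is level of detail.
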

\begin{proof}
The first assertion follows from basic properties of triangulated categories and minimal left approximations.
We show the second assertion. We assume that $X$ and $N$ are $\nu$-stable.
Since $\nu$ is a triangle auto-equivalence, the morphism $\nu f$ is also a minimal left $\add N$-approximation.
Hence the second assertion follows from the first assertion.
\end{proof}

Let $M$ be a basic object of $\mathcal{T}$ with $M=X\oplus N$.
Take a minimal left $\add N$-approximation $f: X\to Y$ and a triangle 
\begin{align}
X\xto{f}Y \to Z \to X[1].\notag
\end{align}
Then $\mu_{X}(M):=Z\oplus N$ is called a (left) mutation of $M$ with respect to $X$.
Moreover, the mutation $\mu_{X}(M)$ is said to be \emph{irreducible} if $X$ is indecomposable.
Mutations of silting objects have the following nice property.

\begin{proposition}\cite[Theorem 2.31 and Proposition 2.33]{AI12}\label{prop:silting-mutation}
Let $M=X\oplus N$ be a basic silting object.
Then $\mu_{X}(M)$ is also basic silting. 
Moreover, if $X\neq 0$, then $M>\mu_{X}(M)$.
\end{proposition}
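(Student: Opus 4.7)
The plan is to verify in turn that $\mu_X(M) = Z \oplus N$ is silting, basic, and strictly smaller than $M$, working from the defining triangle
\[
X \xto{f} Y \xto{g} Z \xto{h} X[1]
\]
in which $Y \in \add N$ and $f$ is a minimal left $\add N$-approximation.

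First, the thick equality $\thick \mu_X(M) = \thick M = \mathcal{T}$ is immediate from the triangle. For the silting Hom-vanishings $\Hom(\mu_X(M), \mu_X(M)[i]) = 0$ with $i > 0$ I split into four cases. The $\Hom(N,N[i])$-case is the silting property of $M$. The cases $\Hom(N,Z[i])$ and $\Hom(Z,N[i])$ follow by applying $\Hom(N,-)$ and $\Hom(-,N)$ respectively to the triangle and using silting of $M$; for the delicate boundary index $i=1$ of the $\Hom(Z,N[1])$-case, one also invokes surjectivity of $\Hom(f,N)$, the defining property of an approximation. In particular this yields $\Hom(Y,Z[j]) = 0$ and $\Hom(Z,Y[j]) = 0$ for all $j \geq 1$ since $Y \in \add N$.

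The main obstacle is $\Hom(Z,Z[i]) = 0$. For $i \geq 2$, routine dimension shifting via $\Hom(-,Z[i])$, combined with the just-established $\Hom(Y,Z[i]) = 0$ and $\Hom(X,Z[i-1]) = 0$ (proved analogously), suffices. For $i = 1$ I would argue by a short factorisation: given $\varphi \colon Z \to Z[1]$, the composite $\varphi g$ lies in $\Hom(Y,Z[1]) = 0$, so applying $\Hom(-,Z[1])$ to the rotated triangle $Y \to Z \to X[1] \to Y[1]$ yields $\varphi = \xi[1] \circ h$ for some $\xi \colon X \to Z$. Applying $\Hom(X,-)$ to the original triangle, the vanishing $\Hom(X,X[1]) = 0$ forces $\xi = g\eta$ for some $\eta \colon X \to Y$. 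Hence $\varphi = g[1] \circ \eta[1] \circ h$, and the composite $\eta[1] \circ h \colon Z \to Y[1]$ lies in $\Hom(Z,Y[1]) = 0$, whence $\varphi = 0$. I expect this step to be the main obstacle.

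The remaining items are bookkeeping. Basicness of $\mu_X(M)$ follows from left-minimality of $f$: any hypothetical common indecomposable summand of $Z$ and $N$ would split off $Y$ (using that the restriction of $h$ to that summand lies in $\Hom(N,X[1]) = 0$) and thereby produce a non-isomorphism $\phi \in \End Y$ with $\phi f = f$, a contradiction. The inequality $M \geq \mu_X(M)$ is already encoded in the Hom-vanishings above. Finally, if $X \neq 0$ but $M \cong \mu_X(M)$, then basicness combined with the Krull--Schmidt property forces $Z \cong X$; then $h \in \Hom(X,X[1]) = 0$, so the triangle splits as $Y \cong X \oplus X$ with $f$ the canonical inclusion of the first factor, and the projection onto that factor is a non-isomorphism $\phi$ with $\phi f = f$, contradicting left-minimality of $f$ unless $X = 0$.
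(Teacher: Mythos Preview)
The paper does not prove this proposition; it is cited from \cite[Theorem~2.31 and Proposition~2.33]{AI12}. Your argument follows essentially the route taken there: the thick-subcategory equality, the silting verification with the approximation trick for the boundary case $\Hom(Z,N[1])$, the factorisation argument for $\Hom(Z,Z[1])$, and the minimality contradiction for strictness are all correct and standard.

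There is one small gap in your basicness step. You verify that $Z$ and $N$ share no indecomposable summand, but you do not check that $Z$ itself has no repeated indecomposable summands, and left-minimality of $f$ alone does not obviously give this. One way to close it: decompose $X=\bigoplus_i X_i$ into indecomposables, so that by additivity of minimal approximations $f=\bigoplus_i f_i$ and $Z=\bigoplus_i Z_i$, with each $Z_i$ indecomposable by Lemma~\ref{lem:approximation-result} (each triangle is non-split since $X_i\notin\add N$). In the Grothendieck group $K_0(\mathcal{T})$ one has $[Z_i]=[Y_i]-[X_i]$; expressing these in the basis $\{[X_j]\}\cup\{[N_k]\}$ furnished by the silting object $M$ (see \cite[Theorem~2.27]{AI12}), the coefficient of $[X_j]$ in $[Z_i]$ is $-\delta_{ij}$ because $Y_i\in\add N$. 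Hence the classes $[Z_i]$ are linearly independent and the $Z_i$ are pairwise non-isomorphic.
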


In the following, we introduce the notion of $\nu$-stable mutations, which is an analog of mutations of tilting objects for self-injective algebras (see \cite[\S5]{CKL15}).
We call $\mu_{X}(M)$ a \emph{$\nu$-stable mutation} if $M$ and $X$ are $\nu$-stable. Note that if $M=X\oplus N$ is $\nu$-stable, then we obtain that $X$ is $\nu
$-stable if and only if $N$ is $\nu$-stable. By Lemma \ref{lem:isom-triang} and Proposition \ref{prop:silting-mutation}, we have the following result.

\begin{proposition}\label{prop:nu-silting-mutation}
Let $M=X\oplus N$ be a basic $\nu$-stable silting object with $X$ a $\nu$-stable object.
Then $\mu_{X}(M)$ is also a basic $\nu$-stable silting object.
\end{proposition}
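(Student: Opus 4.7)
The plan is to combine the two results immediately preceding the statement: Proposition \ref{prop:silting-mutation} takes care of the silting part, and Lemma \ref{lem:isom-triang} takes care of the $\nu$-stability part. So there is essentially nothing new to prove; the work is in arranging the hypotheses correctly.

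First I would unravel the construction: write $\mu_{X}(M)=Z\oplus N$, where $Z$ is the cone in a triangle
\[
X\xrightarrow{f}Y\to Z\to X[1]
\]
with $f$ a minimal left $\add N$-approximation of $X$. By Proposition \ref{prop:silting-mutation}, $\mu_{X}(M)$ is already known to be a basic silting object, so the only remaining task is to verify that it is $\nu$-stable, i.e.\ that $\nu(Z\oplus N)\cong Z\oplus N$.

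Next I would dispose of the $N$-summand using the observation stated just before the proposition: since $M=X\oplus N$ is basic $\nu$-stable and $X$ is $\nu$-stable, the complement $N$ is $\nu$-stable as well (the permutation $v_{M}$ restricted to indecomposable summands of $X$ stays inside $X$, so it restricts to a permutation on the summands of $N$). Hence $\nu N\cong N$. For the $Z$-summand, I would invoke Lemma \ref{lem:isom-triang}: taking $\varphi=\mathrm{id}_{X}$ (or, more precisely, applying $\nu$ to the defining triangle and comparing with the original via the isomorphism $\nu X\cong X$), and using that both $X$ and $N$ are $\nu$-stable, the lemma immediately gives $\nu Z\cong Z$. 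Combining the two, $\nu\mu_{X}(M)\cong\mu_{X}(M)$.

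There is no real obstacle here — the proof is essentially a two-line citation once the setup is in place. The only subtle point is being explicit that $\nu f$ is again a minimal left $\add N$-approximation (since $\nu$ is a triangle auto-equivalence and $\nu N\cong N$ in $\add N$), which is exactly what the second assertion of Lemma \ref{lem:isom-triang} was designed to record, so I would simply cite that sentence rather than re-derive it.
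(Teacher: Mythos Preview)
Your proposal is correct and follows essentially the same approach as the paper's proof: cite Proposition~\ref{prop:silting-mutation} for the basic silting part, then use the observation that $N$ is $\nu$-stable together with Lemma~\ref{lem:isom-triang} to conclude that $\mu_{X}(M)$ is $\nu$-stable. The paper's version is simply a two-line compression of exactly what you wrote.
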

\begin{proof}
Since $X$ and $N$ are $\nu$-stable, so is $\mu_{X}(M)$ by Lemma \ref{lem:isom-triang}.
Thus the assertion follows from Proposition \ref{prop:silting-mutation},
\end{proof}

Now we define irreducible $\nu$-stable mutations. 

\begin{definition}
Let $M$ be a $\nu$-stable object.
\begin{itemize}
\item[(1)] A non-zero $\nu$-stable direct summand $X$ of $M$ is said to be \emph{minimal} if there exists no non-zero proper $\nu$-stable direct summand $X'$ of $X$.
\item[(2)] Assume that $M$ is basic. If $X$ is a minimal $\nu$-stable direct summand of $M$, then we call $\mu_{X}(M)$ an \emph{irreducible $\nu$-stable mutation} of $M$ with respect to $X$.
\end{itemize}
\end{definition}

Let $M=X\oplus N$ be a weakly symmetric object. Since each indecomposable direct summand of $M$ is $\nu$-stable, we obtain that $X$ is minimal $\nu$-stable if and only if it is indecomposable. Thus irreducible mutations coincides with irreducible $\nu$-stable mutations.

\begin{proposition}\label{prop:wekaly-symmetric-mutation}
Each $\nu$-stable mutation of a weakly symmetric silting object is also weakly symmetric silting.
\end{proposition}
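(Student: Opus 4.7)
The plan is to reduce the mutation of $M = X \oplus N$ to indecomposable pieces of $X$ and apply Lemmas \ref{lem:approximation-result} and \ref{lem:isom-triang}. By Proposition \ref{prop:nu-silting-mutation} we already know that $\mu_{X}(M)$ is a basic $\nu$-stable silting object, so the only thing to verify is that the permutation $v_{\mu_{X}(M)}$ is the identity, i.e.\ that every indecomposable direct summand of $\mu_{X}(M)$ is $\nu$-stable.

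First I would decompose $X = \bigoplus_{i\in I} X_{i}$ into indecomposables. Since $M$ is weakly symmetric and each $X_{i}$ is an indecomposable direct summand of $M$, each $X_{i}$ is $\nu$-stable. For every $i\in I$, choose a minimal left $\add N$-approximation $f_{i}\colon X_{i}\to Y_{i}$ and form the triangle
\[
X_{i}\xto{f_{i}} Y_{i}\to Z_{i}\to X_{i}[1].
\]
The direct sum $f := \bigoplus_{i} f_{i}\colon X\to \bigoplus_{i} Y_{i}$ is again a minimal left $\add N$-approximation of $X$, so by Lemma \ref{lem:isom-triang} the mutation triangle at $X$ is isomorphic to the direct sum of the triangles above, and therefore $\mu_{X}(M) \cong \bigl(\bigoplus_{i} Z_{i}\bigr)\oplus N$.

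Now I would apply Lemma \ref{lem:approximation-result} to each triangle separately: since $M$ is silting we have $\Hom_{\mathcal{T}}(N,X_{i}[1])=0$ and $\Hom_{\mathcal{T}}(N,N[1])=0$, and $X_{i}$ is indecomposable, so condition (1) of that lemma is satisfied and hence $Z_{i}$ is indecomposable. Moreover, since both $X_{i}$ and $N$ are $\nu$-stable, the second part of Lemma \ref{lem:isom-triang} gives that $Z_{i}$ is $\nu$-stable. Combined with the fact that every indecomposable summand of $N$ is $\nu$-stable (again because $M$ is weakly symmetric), we conclude that every indecomposable direct summand of $\mu_{X}(M)$ is $\nu$-stable, so $v_{\mu_{X}(M)}$ is the identity and $\mu_{X}(M)$ is weakly symmetric.

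The only mildly delicate point is the compatibility of the approximation with the decomposition $X = \bigoplus X_{i}$, which is handled once one notes that a direct sum of minimal left $\add N$-approximations is itself a minimal left $\add N$-approximation; everything else then follows by a clean application of the two lemmas to each summand individually.
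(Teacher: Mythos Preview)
Your proof is correct and follows essentially the same approach as the paper: decompose $X$ into indecomposables, observe that the direct sum of the minimal left $\add N$-approximations of the $X_{i}$ is a minimal left $\add N$-approximation of $X$, and then apply Lemmas \ref{lem:approximation-result} and \ref{lem:isom-triang} to each piece to conclude that every $Z_{i}$ is indecomposable and $\nu$-stable. The only cosmetic difference is that you invoke Proposition \ref{prop:nu-silting-mutation} to get that $\mu_{X}(M)$ is silting, whereas the paper cites Proposition \ref{prop:silting-mutation} directly; both are fine.
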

\begin{proof}
Let $M=X\oplus N$ be a weakly symmetric silting object and take a triangle $X\xto{f}Y\to Z\to X[1]$ with $f$ a minimal left $\add N$-approximation.
By Proposition \ref{prop:silting-mutation}, $\mu_{X}(M):=Z\oplus N$ is a silting object. Thus it is enough to show that $\mu_{X}(M)$ is weakly symmetric.
We decompose $X$ as $X=\oplus_{i\in I}X_{i}$, where $X_{i}$ is indecomposable.
For each $i\in I$, take a minimal left $\add N$-approximation $f_{i}:X_{i}\to Y_{i}$ and a triangle $X_{i}\xto{f_{i}}Y_{i}\to Z_{i}\to X_{i}[1]$.
By Lemmas \ref{lem:approximation-result} and \ref{lem:isom-triang}, $Z_{i}$ is indecomposable and $\nu$-stable respectively.
Hence $\oplus_{i\in I} Z_{i}$ is weakly symmetric.
On the other hand, since $\oplus_{i\in I}f_{i}$ is a minimal left $\add N$-approximation, it follows from Lemma \ref{lem:isom-triang} that $Z\cong\oplus_{i\in I} Z_{i}$ and hence $\mu_{X}(M)$ is weakly symmetric.
\end{proof}

In the rest of this subsection, we study combinatorial properties for $\nu$-stable mutations.
The following lemma plays an important role in this section. 

\begin{lemma}\cite[Propositions 2.24 and 2.36]{AI12}\label{lem:int-mutation}
Fix an integer $d\ge 1$. Let $M$ be a basic silting object and $N\in \add M \ast \add M[1]\ast\cdots \ast \add M[d]$. 
Then the following statements hold.
\begin{itemize}
\item[(1)] For each $l\in [1,d]$, there exists a triangle
\begin{align}
M_{l}'\xto{f}N\xto{g}M_{l}''\xto{h}M_{l}'[1] \notag
\end{align}
such that $f$ is a minimal right $(\add M\ast \cdots \ast \add M[l-1])$-approximation, $g$ is a minimal left $(\add M[l]\ast \cdots \ast \add M[d])$-approximation and $h$ is a radical of $\mathcal{T}$.
\item[(2)] If $N\in \ssilt{(d+1)_{M}}\mathcal{T}\setminus \ssilt{d_{M}}\mathcal{T}$, then $M_{d}''\neq 0$. 
Moreover, we have $M>\mu_{X}(M)\ge N$ for each basic non-zero direct summand $X$ of $M_{d}''[-d]$. 
\end{itemize}
\end{lemma}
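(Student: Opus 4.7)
The plan is to handle (1) and (2) separately. The main theme for (1) is that the silting hypothesis forces Hom-vanishing between the two halves $\mathcal{U}_l := \add M \ast \cdots \ast \add M[l-1]$ and $\mathcal{V}_l := \add M[l] \ast \cdots \ast \add M[d]$, which upgrades any $\ast$-decomposition triangle for $N$ into an approximation triangle.

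First I would note that $\Hom_{\mathcal{T}}(M[i], M[j]) = \Hom_{\mathcal{T}}(M, M[j-i]) = 0$ whenever $j > i$ because $M$ is silting, and then propagate this along the $\ast$-filtrations by induction to obtain $\Hom_{\mathcal{T}}(\mathcal{U}_l, \mathcal{V}_l) = 0$. Since $N \in \mathcal{U}_l \ast \mathcal{V}_l$, there is a triangle $M_l' \xto{f} N \xto{g} M_l'' \xto{h} M_l'[1]$ with $M_l' \in \mathcal{U}_l$ and $M_l'' \in \mathcal{V}_l$. Applying $\Hom_{\mathcal{T}}(U, -)$ for $U \in \mathcal{U}_l$ and using $\Hom_{\mathcal{T}}(U, M_l'') = 0$ shows $f$ is a right $\mathcal{U}_l$-approximation; applying $\Hom_{\mathcal{T}}(-, V)$ for $V \in \mathcal{V}_l$ and using $\Hom_{\mathcal{T}}(M_l', V) = 0$ shows $g$ is a left $\mathcal{V}_l$-approximation. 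A standard minimization---splitting off direct summands of $M_l'$ and $M_l''$ on which $h$ restricts to an isomorphism---yields minimal approximations, and then the minimality of $f$ and $g$ forces $h$ to land in $\rad \mathcal{T}$.

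For (2), the claim $M_d'' \neq 0$ is immediate, since $M_d'' = 0$ would give $N \cong M_d' \in \mathcal{U}_d$, contradicting $N \notin \ssilt{d_M}\mathcal{T}$. Now $M_d''[-d] \in \add M[d][-d] = \add M$, so a basic non-zero summand $X$ is a summand of $M$. Writing $M = X \oplus N'$ and $\mu_X(M) = Z \oplus N'$ via a triangle $X \to Y \to Z \to X[1]$ with $Y$ a minimal left $\add N'$-approximation, the strict inequality $M > \mu_X(M)$ is Proposition \ref{prop:silting-mutation}. For $\mu_X(M) \geq N$, the summand $N' \in \add M$ handles itself since $M \geq N$, and for $Z$ one applies $\Hom_{\mathcal{T}}(-, N[i])$ to the mutation triangle: the terms $\Hom_{\mathcal{T}}(Y, N[i])$ vanish for all $i > 0$ and $\Hom_{\mathcal{T}}(X, N[i-1])$ vanishes for $i \geq 2$, both by $M \geq N$.

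The delicate case is $i = 1$, where $\Hom_{\mathcal{T}}(Z, N[1]) = 0$ is not formal. The idea is to identify the mutation triangle with a shift of the rotated triangle from part (1): rotating $M_d' \to N \to M_d'' \to M_d'[1]$ to $M_d''[-1] \to M_d' \to N \to M_d''$ and restricting the connecting map to the summand $X[d-1] \subset M_d''[-1]$ produces a morphism $X[d-1] \to M_d'$ whose appropriate shift, after completing to a minimal approximation, realizes the map $X \to Y$, with $Z$ then identified inside $N$. With this identification, the connecting-map pairing $\Hom_{\mathcal{T}}(X, N) \to \Hom_{\mathcal{T}}(Z, N[1])$ can be rewritten through the part (1) Hom-vanishing $\Hom_{\mathcal{T}}(\mathcal{U}_l, \mathcal{V}_l) = 0$ and so must vanish. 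Making this identification precise---reconciling the minimality of $g$ in part (1) with the minimality needed for the mutation approximation $f$---is the main technical obstacle.
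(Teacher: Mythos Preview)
The paper does not supply its own proof of this lemma; it is simply cited from \cite{AI12}. So there is no direct comparison to make, and your attempt must stand on its own.

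Part (1) and the $i\ge 2$ case of part (2) are handled correctly. The genuine gap is the $i=1$ case of (2), which you yourself flag as unresolved. Your suggested fix---identifying the mutation triangle with a rotation of the approximation triangle, so that ``$Z$ is identified inside $N$''---does not work: there is no reason for $Z$ to be a summand or subobject of $N$, and the shifted map $X[d-1]\to M_d'$ you extract lives in the wrong degree to produce the mutation approximation $X\to Y$ directly.

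The missing ingredient is \cite[Lemma 2.25]{AI12} (invoked elsewhere in the paper, e.g.\ in the proof of Proposition \ref{prop:cyclic-silting}). Take the triangle from (1) with $l=1$: a minimal right $\add M$-approximation $M_0\to N$. Since $N$ is silting, that lemma gives $\add M_0\cap \add(M_d''[-d])=\{0\}$; as $X\in\add(M_d''[-d])$ and $M_0\in\add M=\add(X\oplus N')$, this forces $M_0\in\add N'$. Now any $\psi:X\to N$ factors through $M_0$ (because $M_0\to N$ is a right $\add M$-approximation and $X\in\add M$), hence through an object of $\add N'$, hence through the left $\add N'$-approximation $f:X\to Y$. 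This shows that $f^{\ast}:\Hom_{\mathcal{T}}(Y,N)\to\Hom_{\mathcal{T}}(X,N)$ is surjective, and the long exact sequence then gives $\Hom_{\mathcal{T}}(Z,N[1])=0$.
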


The lemma above induces the following properties of $\nu$-stable mutations.

\begin{proposition}\label{prop:local-property-nu-mutation}
Let $M,N$ be basic silting objects with $M>N$.
Assume that one of the following two conditions is satisfied:
\begin{itemize}
\item[(a)] $M$ and $N$ are $\nu$-stable.
\item[(b)] $M$ is weakly symmetric.
\end{itemize}
Then the following statements hold.
\begin{itemize}
\item[(1)] There is a minimal $\nu$-stable direct summand $X$ of $M$ such that $M>\mu_{X}(M)\ge N$.  
\item[(2)] If the set 
\begin{align}
\silt^{\nu}[N,M]:=\{ L\in \silt^{\nu}\mathcal{T} \mid N\le L\le M\} \notag
\end{align}
is finite, then $N$ can be obtained from $M$ by iterated irreducible $\nu$-stable mutation. 
In particular, if $\textnormal{(b)}$ is satisfied, then all objects in $\silt^{\nu}[N,M]$ are weakly symmetric.
\end{itemize}
\end{proposition}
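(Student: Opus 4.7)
The plan is to use Lemma~\ref{lem:int-mutation} to convert the order relation $M>N$ into an explicit mutation step, and then to verify that this step can be arranged as an irreducible $\nu$-stable mutation under either hypothesis.

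For~(1), I would choose $d\ge 1$ minimal with $N\in\ssilt{(d+1)_M}\mathcal{T}$, so that $N\in\ssilt{(d+1)_M}\mathcal{T}\setminus\ssilt{d_M}\mathcal{T}$, and apply Lemma~\ref{lem:int-mutation}(1) to obtain a triangle
\[ M'_d\to N\xto{g} M''_d\to M'_d[1] \]
in which $g$ is a minimal left $\add M[d]$-approximation and $M''_d\ne 0$ by Lemma~\ref{lem:int-mutation}(2). The key step is to show that $M''_d$ is $\nu$-stable. Under hypothesis~(b) this is immediate, since weak symmetry forces every indecomposable summand of $M$, hence every object of $\add M[d]$, to be $\nu$-stable. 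Under hypothesis~(a), both $M$ and $N$ are $\nu$-stable, so $\nu g:N\to \nu M''_d$ is again a minimal left $\add M[d]$-approximation, and the uniqueness statement of Lemma~\ref{lem:isom-triang} forces $\nu M''_d\cong M''_d$. I would then decompose $M''_d[-d]\in\add M$ into minimal $\nu$-stable summands and choose one such summand $X$; since $M$ is basic, $X$ is automatically a minimal $\nu$-stable direct summand of $M$. Lemma~\ref{lem:int-mutation}(2) then gives $M>\mu_X(M)\ge N$, and Proposition~\ref{prop:nu-silting-mutation} ensures $\mu_X(M)$ is $\nu$-stable silting, so it is the required irreducible $\nu$-stable mutation.

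For~(2), I would iterate: set $M_0:=M$ and, while $M_k>N$, define $M_{k+1}:=\mu_{X_k}(M_k)$ by applying~(1) to the pair $(M_k,N)$. The hypothesis of~(1) is preserved along the chain: under~(a), each $M_k$ remains $\nu$-stable silting by Proposition~\ref{prop:nu-silting-mutation}, while under~(b), each $M_k$ remains weakly symmetric silting by Proposition~\ref{prop:wekaly-symmetric-mutation}. Since $M_0>M_1>\cdots$ is a strictly decreasing sequence in the finite poset $\silt^\nu[N,M]$, it must terminate, and by the construction in~(1) it can only terminate at $N$. For the final clause, given any $L\in\silt^\nu[N,M]$ the interval $\silt^\nu[L,M]\subseteq\silt^\nu[N,M]$ is still finite, so~(2) applied with $L$ in place of $N$ expresses $L$ as an iterated irreducible $\nu$-stable mutation of the weakly symmetric silting object $M$, and Proposition~\ref{prop:wekaly-symmetric-mutation} then shows $L$ is weakly symmetric.

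The main obstacle will be verifying the $\nu$-stability of $M''_d$ under hypothesis~(a); this is precisely what forces the use of \emph{minimal} approximations in Lemma~\ref{lem:int-mutation}(1) together with the uniqueness statement of Lemma~\ref{lem:isom-triang}. Once this is settled, everything else reduces to bookkeeping with the mutation properties of Propositions~\ref{prop:nu-silting-mutation} and~\ref{prop:wekaly-symmetric-mutation} and the finiteness of $\silt^\nu[N,M]$.
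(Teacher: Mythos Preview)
Your proposal is correct and follows essentially the same approach as the paper: both arguments locate the minimal $d$ with $N\in\ssilt{(d+1)_M}\mathcal{T}\setminus\ssilt{d_M}\mathcal{T}$, use Lemma~\ref{lem:int-mutation} to extract the minimal left $\add M[d]$-approximation $g:N\to M''_d$, verify $\nu$-stability of $M''_d$ via Lemma~\ref{lem:isom-triang} in case~(a) and via weak symmetry of $M$ in case~(b), and then pick a minimal $\nu$-stable summand $X$ of $M''_d[-d]$. Your treatment of the ``In particular'' clause in~(2) is slightly more explicit than the paper's (you spell out the reduction to the subinterval $\silt^\nu[L,M]$), but this is a matter of exposition rather than strategy.
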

\begin{proof}
(1) Let $M>N$ be basic silting objects satisfying (a) or (b).
By \cite[Proposition 2.23]{AI12}, we have $N\in \ssilt{(d+1)_{M}}\mathcal{T}\setminus \ssilt{d_{M}}\mathcal{T}$ for some $d\ge 1$.
Then it follows from Lemma \ref{lem:int-mutation} that there exists a minimal left $\add M[d]$-approximation $g:N\to M_{d}''$ with $M_{d}''\neq 0$.
We show that $M_{d}''$ is $\nu$-stable.
If (a) is satisfied, then the assertion follows from Lemma \ref{lem:isom-triang}.
On the other hand, if (b) is satisfied, then the assertion follows from the fact that each indecomposable direct summand of $M$ is $\nu$-stable. 
Hence for both cases, $M_{d}''$ is $\nu$-stable.
Taking a minimal $\nu$-stable direct summand $X$ of $M''_{d}[-d]$, we have $M>\mu_{X}(M)\ge N$ by Lemma \ref{lem:int-mutation}(2).

(2) If $(a)$ (respectively, (b)) is satisfied, then $\mu_{X}(M)$ in (1) is also $\nu$-stable (respectively, weakly symmetric) by Proposition \ref{prop:nu-silting-mutation} (respectively, Proposition \ref{prop:wekaly-symmetric-mutation}).
By repeated use of (1), we have a sequence of irreducible $\nu$-stable mutations
\begin{align}
M>L_{1}>L_{2}>\cdots (\ge N) \notag 
\end{align}
in $\silt^{\nu}[N,M]$.
Since the set $\silt^{\nu}[N,M]$ is finite, there exists an integer $n>0$ such that $L_{n}=N$. 
Hence we have the assertion.
\end{proof}

As an analog of \cite[Theorem 2.35]{AI12}, we compare the Hasse quiver of $(\silt^{\nu} \mathcal{T},\ge)$ and the mutation quiver $Q(\silt^{\nu}\mathcal{T})=(Q_{0},Q_{1})$ defined as 
\begin{align}
&Q_{0}:=\nsilt\mathcal{T}, \notag\\
&Q_{1}:=\{ M \to N \mid \textnormal{$\mu_{X}(M)=N$ for some minimal $\nu$-stable $X$}\}. \notag
\end{align}
\begin{proposition}
Let $M,N$ be basic $\nu$-stable silting objects of $\mathcal{T}$.
Then the following statements are equivalent:
\begin{itemize}
\item[(1)] $N$ is an irreducible $\nu$-stable mutation of $M$
\item[(2)] $M>N$ and there exists no $L\in \silt^{\nu}\mathcal{T}$ satisfying $M>L>N$.
\end{itemize}
In particular, $Q(\silt^{\nu}\mathcal{T})$ is quiver isomorphic to the Hasse quiver of $(\silt^{\nu}\mathcal{T}, \ge)$.
\end{proposition}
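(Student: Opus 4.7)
The plan is to adapt \cite[Theorem~2.35]{AI12} to the $\nu$-stable setting, using only the structural results already established in this section.

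For $(2)\Rightarrow(1)$, I apply Proposition \ref{prop:local-property-nu-mutation}(1) under hypothesis~(a) to the pair $M>N$: this produces a minimal $\nu$-stable direct summand $X$ of $M$ with $M>\mu_{X}(M)\ge N$. Proposition \ref{prop:nu-silting-mutation} gives $\mu_{X}(M)\in\silt^{\nu}\mathcal{T}$, and the covering hypothesis in~(2) forces $\mu_{X}(M)=N$.

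For $(1)\Rightarrow(2)$, Propositions \ref{prop:nu-silting-mutation} and \ref{prop:silting-mutation} give $N\in\silt^{\nu}\mathcal{T}$ and $M>N$. Suppose for contradiction that some $L\in\silt^{\nu}\mathcal{T}$ satisfies $M>L>N$. Applying Proposition \ref{prop:local-property-nu-mutation}(1) to the pair $M>L$ produces a minimal $\nu$-stable direct summand $X'$ of $M$ with $M>\mu_{X'}(M)\ge L$; transitivity of $\ge$ then yields $\mu_{X'}(M)\ge N$, and if $X=X'$ we would get $N=\mu_{X'}(M)\ge L>N$, impossible, so $X\ne X'$. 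As a small combinatorial remark, two distinct minimal $\nu$-stable direct summands of $M$ share no indecomposable summand: the indecomposable summands of any minimal $\nu$-stable summand form a single $v_{M}$-orbit, and a common orbit would give a common nonzero $\nu$-stable sub-summand, contradicting minimality. Hence I may write $M=X\oplus X'\oplus W$, so that $\mu_{X'}(M)=Z'\oplus X\oplus W$ is defined by a triangle $X'\xrightarrow{f'}Y'\to Z'\to X'[1]$ with $Y'\in\add(X\oplus W)$.

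To extract the contradiction, note that $X'$ is a direct summand of $N=Z\oplus X'\oplus W$, so $\mu_{X'}(M)\ge N$ forces $\Hom_{\mathcal{T}}(Z',X'[1])=0$; hence the connecting morphism $Z'\to X'[1]$ vanishes, the mutation triangle splits, and $X'\in\add Y'\subseteq\add(X\oplus W)$, impossible by the combinatorial remark. The ``in particular'' assertion is then immediate, as the equivalence identifies the arrows of $Q(\silt^{\nu}\mathcal{T})$ with the covering relations of $(\silt^{\nu}\mathcal{T},\ge)$. I expect the main obstacle to be this final triangle-splitting step, since it is the only place where the minimality of $X'$ is genuinely used, beyond formal consequences of the earlier propositions.
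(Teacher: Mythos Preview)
Your proof is correct and is precisely the adaptation of \cite[Theorem~2.35]{AI12} and \cite[Theorem~5.11]{CKL15} that the paper has in mind; the paper's own proof is just a pointer to those references, and you have written out the $\nu$-stable version of that argument in full. The only additional ingredient beyond the ordinary silting case is your combinatorial remark that distinct minimal $\nu$-stable summands of $M$ are summand-disjoint (equivalently, each is a single $v_{M}$-orbit), which is exactly what is needed to run the splitting argument with $M=X\oplus X'\oplus W$; everything else is the standard route via Proposition~\ref{prop:local-property-nu-mutation}(1) and the vanishing $\Hom_{\mathcal{T}}(Z',X'[1])=0$.
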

\begin{proof}
The proof is the same as in \cite[Theorem 2.35]{AI12} and \cite[Theorem 5.11]{CKL15}.
\end{proof}

\subsection{$\nu$-stable silting-discrete triangulated category}
In this subsection, we introduce  the notion of $\nu$-stable silting-discrete triangulated categories, which gives a unification of silting-discrete triangulated categories and tilting-discrete bounded homotopy categories of finitely generated projective modules for self-injective algebras.
Recall the definition of silting-discrete triangulated categories which is introduced in \cite{Ai13}.
A triangulated category $\mathcal{T}$ with a silting object is said to be \emph{silting-discrete} if for each $M\in \silt\mathcal{T}$ and $d\in \mathbb{Z}_{\ge 0}$, the set
\begin{align}
\ssilt{(d+1)_{M}}\mathcal{T}:=\{ N\in \silt\mathcal{T} \mid M \ge N \ge M[d]\} \notag
\end{align}
is finite.
Moreover, a finite dimensional algebra is called a \emph{silting-discrete algebra} if the bounded homotopy category $\Kb(\proj A)$ is silting-discrete.
Similarly, we define \emph{tilting-discrete triangulated categories} and \emph{tilting-discrete algebras}.

Now we introduce the notion of $\nu$-stable silting-discrete triangulated categories.

\begin{definition}
\begin{itemize}
\item[(1)] Assume that $\mathcal{T}$ has a $\nu$-stable silting object.
A triangulated category $\mathcal{T}$ is said to be \emph{$\nu$-stable silting-discrete} if for each $M\in \silt^{\nu}\mathcal{T}$ and $d\in \mathbb{Z}_{\ge 0}$, the set
\begin{align}
\ssilt{(d+1)_{M}}^{\nu}\mathcal{T} :=\{ N\in \silt^{\nu}\mathcal{T} \mid M \ge N \ge M[d]\} \notag
\end{align}
is finite. Note that $\ssilt{1_{M}}^{\nu}\mathcal{T}=\{ M \}/\cong$.
\item[(2)] Let $A$ be a finite dimensional algebra and $\nu:\Kb(\proj A)\to \Kb(\proj A)$ a triangle auto-equivalence.
We call $A$ a \emph{$\nu$-stable silting-discrete algebra} if $\Kb(\proj A)$ is $\nu$-stable silting-discrete.
\end{itemize}
\end{definition}

The following example shows that $\nu$-stable silting-discrete triangulated categories unify silting-discrete triangulated categories and tilting-discrete self-injective algebras.

\begin{example}
\begin{itemize}
\item[(1)] Assume that $\nu$ is functorial isomorphic to the identity functor. 
Then $\nu$-stable silting objects are exactly silting objects.
Moreover, $\nu$-stable silting-discrete triangulated categories coincide with silting-discrete triangulated categories.   
\item[(2)] Let $A$ be a self-injective algebra. Then $\Kb(\proj A)$ has a Serre functor $\nu$.
By Proposition \ref{prop:selfinjective-tilting}, all tilting objects are $\nu$-stable silting objects. 
Hence $A$ is $\nu$-stable silting-discrete if and only if it is tilting-discrete.
\end{itemize}
\end{example}

As a generalization of \cite[Proposition 2.14]{AH06} and \cite[Corollary 2.43]{AI12}, we provide an example of $\nu$-stable silting-discrete triangulated categories which plays an important role in the next section.
Let $M=\oplus_{i\in I}M_{i}$ be a basic $\nu$-stable object of $\mathcal{T}$, where each $M_{i}$ is indecomposable.
Define a permutation $v_{M}:I\to I$ as $\nu M_{i}\cong M_{v_{M}(i)}$. 
We call $M$ a \emph{$\nu$-cyclic object} if $v_{M}$ acts transitively on $I$.

\begin{proposition}\label{prop:cyclic-silting}
Let $A$ be a $\nu$-cyclic silting object of $\mathcal{T}$.
Then we have 
\begin{align}
\silt^{\nu}(\mathcal{T})=\{ A[i]\mid i\in \mathbb{Z}\}. \notag
\end{align}
In particular, $\mathcal{T}$ is $\nu$-stable silting-discrete.
\end{proposition}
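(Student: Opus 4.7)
The plan is to establish the set equality $\silt^{\nu}(\mathcal{T})=\{A[i]\mid i\in\mathbb{Z}\}$, from which the finiteness assertion is immediate. One inclusion is easy: since $\nu$ is a triangle auto-equivalence, $\nu(A[i])\cong(\nu A)[i]\cong A[i]$ for every $i$, and shifts of silting objects are silting, so $A[i]\in\silt^{\nu}\mathcal{T}$ for all $i\in\mathbb{Z}$.

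Before attacking the other inclusion, I would isolate the following rigidity consequence of $\nu$-cyclicity: every nonzero $\nu$-stable object $Y\in\add A$ is isomorphic to $A^{k}$ for some $k\geq 1$. Writing $A=\bigoplus_{i\in I}M_{i}$ with $M_{i}$ indecomposable and $Y=\bigoplus_{i\in I}M_{i}^{n_{i}}$, the isomorphism $\nu Y\cong Y$ forces $n_{v_{A}(i)}=n_{i}$ for every $i\in I$, and transitivity of $v_{A}$ on $I$ makes all $n_{i}$ equal. In particular, a basic $\nu$-stable object of $\add A$ must be isomorphic to $A$.

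For the inclusion $\subseteq$, take $N\in\silt^{\nu}\mathcal{T}$ and use a standard silting comparison (\cite[Proposition 2.23]{AI12}) to pick integers $a\leq b$ with $A[a]\geq N\geq A[b]$, so $N\in\add A[a]\ast\cdots\ast\add A[b]$. I would induct on the gap $d:=b-a$. The base case $d=0$ follows from the rigidity lemma applied to $A[a]$: $N$ is a basic $\nu$-stable object of $\add A[a]$, hence $N\cong A[a]$. For the inductive step, Lemma \ref{lem:int-mutation}(1) with $M=A[a]$ produces a triangle $A'\to N\to A''\to A'[1]$ whose middle map is a minimal left $\add A[b]$-approximation. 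If $A''=0$ then $N\in\add A[a]\ast\cdots\ast\add A[b-1]$, so $A[a]\geq N\geq A[b-1]$ and induction applies with a smaller gap. Otherwise $N\in\ssilt{(d+1)_{A[a]}}\mathcal{T}\setminus\ssilt{d_{A[a]}}\mathcal{T}$, and Lemma \ref{lem:isom-triang} (using $\nu$-stability of $N$ and $A[b]$) shows $A''$ is $\nu$-stable. The rigidity lemma then gives $A''[-d]\cong A[a]^{k}$ with $k\geq 1$, so $X=A[a]$ is a basic direct summand of $A''[-d]$ and Lemma \ref{lem:int-mutation}(2) yields $A[a+1]=\mu_{A[a]}(A[a])\geq N$, i.e.\ $A[a+1]\geq N\geq A[b]$, again reducing the gap to $d-1$.

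Granting the equality, the silting-discreteness is a one-line corollary: for any $M=A[i]\in\silt^{\nu}\mathcal{T}$ and $d\geq 0$, the set $\ssilt{(d+1)_{M}}^{\nu}\mathcal{T}$ is contained in the finite set $\{A[i+j]\mid 0\leq j\leq d\}$. I expect the main obstacle to be the inductive step, specifically the coordination between Lemma \ref{lem:int-mutation}(2), which supplies a mutation witness from any basic summand of the approximation target, and the rigidity lemma, which uses $\nu$-cyclicity to pin that summand down to $A[a]$; the subtle point is verifying that the approximation target inherits $\nu$-stability from $N$ via Lemma \ref{lem:isom-triang}.
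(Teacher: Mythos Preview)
Your argument is correct, but it follows a different route from the paper's. The paper gives a one-shot contradiction rather than an induction: assuming the minimal interval $[m_{1},m_{2}]$ with $M\in\add A[m_{1}]\ast\cdots\ast\add A[m_{2}]$ has length $l=m_{2}-m_{1}>0$, it applies Lemma~\ref{lem:int-mutation}(1) \emph{twice}, once with $l=1$ (extracting a nonzero $A'\in\add A$ via a minimal right $\add A$-approximation) and once at the top (extracting a nonzero $A''\in\add A$ via a minimal left $\add A[l]$-approximation). Both $A'$ and $A''$ are $\nu$-stable by Lemma~\ref{lem:isom-triang}, hence by $\nu$-cyclicity $\add A'=\add A=\add A''$. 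But \cite[Lemma~2.25]{AI12} forces $\add A'\cap\add A''=\{0\}$, a contradiction.

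Your approach trades this ``two-sided'' trick for an induction on the gap $d=b-a$, using only the left approximation together with Lemma~\ref{lem:int-mutation}(2) and the observation $\mu_{A[a]}(A[a])=A[a+1]$ to shrink the interval from below. This works and is self-contained (you never invoke \cite[Lemma~2.25]{AI12}), though it is slightly longer. The paper's version is shorter because the disjointness lemma from \cite{AI12} collapses the argument to a single step; your version makes the role of mutation more explicit. Both rest on the same underlying rigidity fact you isolated: a nonzero $\nu$-stable summand of $A$ must have full additive closure $\add A$.
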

\begin{proof}
Let $M\in \silt^{\nu}\mathcal{T}$.
By \cite[Proposition 2.23]{AI12}, there exist integers $m_{1}\le m_{2}\in \mathbb{Z}$ such that $M\in \add A[m_{1}]\ast \add A[m_{1}+1]\ast\cdots\ast \add A[m_{2}]$, $M\notin \add A[m_{1}+1]\ast\cdots \ast\add A[m_{2}]$ and $M\notin \add A[m_{1}]\ast\cdots\ast\add A[m_{2}-1]$.
Let $L:=M[-m_{1}]$ and $l:=m_{2}-m_{1}$.
Suppose $l \neq 0$.
By Lemma \ref{lem:int-mutation}, there exist two triangles
\begin{align}
&A'\xto{f} L\to L''\to A'[1],\notag\\
&L'\to L\xto{g} A''[l]\to L'[1]\notag
\end{align}
such that $A',A''\in \add A$ are non-zero, $f$ is a minimal right $\add A$-approximation and $g$ is a minimal left $\add A[l]$-approximation.
Then it follows from \cite[Lemma 2.25]{AI12} that $\add A'\cap \add A''=\{ 0\}$.
On the other hand, by Lemma \ref{lem:isom-triang}, we have $\nu A'\cong A'$ and $\nu A''\cong A''$.
Since $A$ is $\nu$-cyclic, we obtain $\add A'=\add A=\add A''$, a contradiction.
This implies $l=0$ and hence $M=A[m_{1}]$.
\end{proof}

The following proposition is one of nice properties of $\nu$-stable silting-discrete triangulated categories.
\begin{proposition}
Assume that $\mathcal{T}$ is $\nu$-stable silting-discrete. 
If $M,N\in \silt^{\nu}\mathcal{T}$ with $M> N$, then $N$ can be obtained from $M$ by iterated irreducible $\nu$-stable mutation.
\end{proposition}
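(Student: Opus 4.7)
The plan is to reduce the claim directly to Proposition \ref{prop:local-property-nu-mutation}(2). That proposition, applied under condition (a), tells us that as soon as $M, N \in \silt^{\nu}\mathcal{T}$ with $M > N$ and the interval $\silt^{\nu}[N,M]$ is finite, $N$ is reachable from $M$ by iterated irreducible $\nu$-stable mutation. So the whole task is to show that the $\nu$-stable silting-discreteness hypothesis forces $\silt^{\nu}[N,M]$ to be a finite set.

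To do this, I would first invoke \cite[Proposition 2.23]{AI12}, which guarantees that for any strict comparison $M > N$ of silting objects there exists an integer $d \ge 1$ such that $M \ge N \ge M[d]$ (this is the standard fact that strictly comparable silting objects always sit inside some finite shift-window).

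Next, I would observe that for any $L \in \silt^{\nu}[N, M]$ the chain $M \ge L \ge N \ge M[d]$ holds, and hence $L \in \ssilt{(d+1)_{M}}^{\nu}\mathcal{T}$. This yields the inclusion
\begin{align*}
\silt^{\nu}[N,M] \subseteq \ssilt{(d+1)_{M}}^{\nu}\mathcal{T}.
\end{align*}
The right-hand side is finite by the assumption that $\mathcal{T}$ is $\nu$-stable silting-discrete, so $\silt^{\nu}[N, M]$ is finite as well. Since both $M$ and $N$ are $\nu$-stable, condition (a) of Proposition \ref{prop:local-property-nu-mutation} is satisfied, and part (2) of that proposition gives the desired iterated irreducible $\nu$-stable mutation from $M$ to $N$.

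There is no real obstacle here; the argument is essentially bookkeeping once Proposition \ref{prop:local-property-nu-mutation}(2) is in place. The only slightly delicate point is recognising that \cite[Proposition 2.23]{AI12} applies in the ordinary silting sense (rather than the $\nu$-stable sense), but this is harmless because every $\nu$-stable silting object is a silting object, and the bound $M \ge N \ge M[d]$ only needs to be verified as a statement in $\silt\mathcal{T}$ before intersecting with $\silt^{\nu}\mathcal{T}$.
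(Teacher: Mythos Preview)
The proposal is correct and follows essentially the same route as the paper: invoke \cite[Proposition 2.23]{AI12} to trap $N$ in a finite shift-window below $M$, use $\nu$-stable silting-discreteness to conclude the interval $\silt^{\nu}[N,M]$ is finite, and apply Proposition \ref{prop:local-property-nu-mutation}(2) under condition (a). Your write-up is in fact slightly more explicit than the paper's about the inclusion $\silt^{\nu}[N,M]\subseteq \ssilt{(d+1)_{M}}^{\nu}\mathcal{T}$ and about why \cite[Proposition 2.23]{AI12} applies despite being stated for ordinary silting objects.
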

\begin{proof}
By \cite[Proposition 2.23]{AI12}, there exists an integer $d$ such that $M> N\ge M[d+1]$.
Since $\mathcal{T}$ is $\nu$-stable silting-discrete, the set $\ssilt{d_{M}}^{\nu}\mathcal{T}$ is finite.
Hence the assertion follows from Proposition \ref{prop:local-property-nu-mutation}(2).
\end{proof}
 
Next, following \cite[Theorem 3.8]{Ai13} and \cite[Theorem 2.4]{AM17}, we give a characterization of triangulated categories to be $\nu$-stable silting-discrete.
\begin{theorem}\label{thm:equiv-nu-silting-discrete}
Let $\mathcal{T}$ be a triangulated category with a $\nu$-stable silting object.
Then the following statements are equivalent.
\begin{itemize}
\item[(1)] $\mathcal{T}$ is $\nu$-stable silting-discrete.
\item[(2)] $\mathcal{T}$ admits a $\nu$-stable silting object $A$ such that, for each integer $d\ge 0$, the set $\ssilt{(d+1)_{A}}^{\nu}\mathcal{T}$ is finite.
\item[(3)] Fix any basic $\nu$-stable silting object $A$. For each object $M$ obtained by a finite sequence of irreducible $\nu$-stable mutations from $A$, the set $\ssilt{2_{M}}^{\nu}\mathcal{T}$ is finite. 
\end{itemize}
\end{theorem}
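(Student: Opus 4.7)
The plan is to prove the cycle $(1) \Rightarrow (2) \Rightarrow (3) \Rightarrow (1)$. The first implication is immediate: if (1) holds, take for (2) any basic $\nu$-stable silting object (which exists by assumption). For $(2) \Rightarrow (3)$, I would argue that after $n$ iterated irreducible $\nu$-stable (left) mutations starting at $A$, the resulting object $M$ lies in $\add A \ast \add A[1] \ast \cdots \ast \add A[n]$; indeed, a single mutation sends $N$ to an object in $\add N \ast \add N[1]$, and this propagates by induction. Hence $A \ge M \ge A[n]$, and any $L \in \ssilt{2_M}^{\nu}\mathcal{T}$ then satisfies $A \ge L \ge M[1] \ge A[n+1]$, so $L \in \ssilt{(n+2)_A}^{\nu}\mathcal{T}$, which is finite by (2).

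The substantive direction is $(3) \Rightarrow (1)$. My plan is first to establish finiteness of $\ssilt{(d+1)_A}^{\nu}\mathcal{T}$ for every $d \ge 0$ by induction on $d$. The case $d = 0$ is trivial. For the inductive step, assume $\ssilt{d_A}^{\nu}\mathcal{T}$ is finite. For each $L$ in this finite set the interval $\silt^{\nu}[L, A]$ is contained in $\ssilt{d_A}^{\nu}\mathcal{T}$, hence finite, so Proposition \ref{prop:local-property-nu-mutation}(2) gives that $L$ is reached from $A$ by a finite sequence of irreducible $\nu$-stable mutations; then (3) applies and $\ssilt{2_L}^{\nu}\mathcal{T}$ is finite. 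The combinatorial heart of the argument is the $\nu$-stable analogue of \cite[Proposition 2.36]{AI12}, namely
\begin{equation*}
\ssilt{(d+1)_A}^{\nu}\mathcal{T} \setminus \ssilt{d_A}^{\nu}\mathcal{T} \subseteq \bigcup_{L \in \ssilt{d_A}^{\nu}\mathcal{T}} \ssilt{2_L}^{\nu}\mathcal{T},
\end{equation*}
i.e., every $U$ in the left-hand side admits some $L \in \ssilt{d_A}^{\nu}\mathcal{T}$ with $L \ge U \ge L[1]$. I would prove this by following the Aihara--Iyama construction via the approximation triangles of Lemma \ref{lem:int-mutation}, using Lemma \ref{lem:isom-triang} to verify that each auxiliary object inherits $\nu$-stability from the $\nu$-stable objects $A$ and $U$. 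Combining this inclusion with the inductive hypothesis and the finiteness of each $\ssilt{2_L}^{\nu}\mathcal{T}$ completes the induction.

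To finish $(3) \Rightarrow (1)$, I would transfer finiteness at $A$ to finiteness at an arbitrary $M \in \silt^{\nu}\mathcal{T}$ by a shift argument. By \cite[Proposition 2.23]{AI12} there exist $a, b \ge 0$ with $A[-a] \ge M \ge A[b]$. For any $N \in \ssilt{(d+1)_M}^{\nu}\mathcal{T}$, transitivity gives $A[-a] \ge N \ge A[b+d]$, so $N$ lies in $\ssilt{(a+b+d+1)_{A[-a]}}^{\nu}\mathcal{T}$, which is in bijection with $\ssilt{(a+b+d+1)_A}^{\nu}\mathcal{T}$ via the shift $[a]$ (preserving $\nu$-stability since $\nu$ commutes with $[1]$). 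The latter set is finite by the induction above, so $\ssilt{(d+1)_M}^{\nu}\mathcal{T}$ is finite, proving (1). The main obstacle is the key combinatorial inclusion; once it is adapted to the $\nu$-stable setting using Lemmas \ref{lem:approximation-result} and \ref{lem:isom-triang} to propagate $\nu$-stability through the approximation triangles, the remaining arguments are bookkeeping with the partial order $\ge$.
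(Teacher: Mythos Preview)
Your proposal is correct and follows essentially the same route as the paper, with only organizational differences. The paper proves the implications $(1)\Rightarrow(2)$, $(1)\Rightarrow(3)$, $(2)\Rightarrow(1)$ (citing \cite[Proposition~3.8]{Ai13}, which is exactly your shift argument), and $(3)\Rightarrow(2)$; you instead run the cycle $(1)\Rightarrow(2)\Rightarrow(3)\Rightarrow(1)$, and your $(3)\Rightarrow(1)$ fuses the paper's $(3)\Rightarrow(2)$ and $(2)\Rightarrow(1)$. Your induction on $d$ is equivalent to the paper's nested-union formula
\[
\ssilt{(d+1)_{A}}^{\nu}\mathcal{T}\subseteq \bigcup_{A_{1}\in\ssilt{2_{A}}^{\nu}\mathcal{T}}\cdots\bigcup_{A_{d-1}\in\ssilt{2_{A_{d-2}}}^{\nu}\mathcal{T}}\ssilt{2_{A_{d-1}}}^{\nu}\mathcal{T}.
\]

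One caution: your ``key inclusion'' is not obtained from a single application of Lemma~\ref{lem:int-mutation}. The paper isolates the needed step as Lemma~\ref{lem:technical-lem}: starting from $A$ one iterates irreducible $\nu$-stable mutations $A>A_1>A_2>\cdots$, each chosen so that $A_i\ge\{U,A[1]\}$, and the iteration terminates because all the $A_i$ lie in the \emph{finite} set $\ssilt{2_{A}}^{\nu}\mathcal{T}$. That finiteness is available to you (it follows from (3) applied to $A$ itself), but it must be invoked to guarantee termination; Lemma~\ref{lem:int-mutation} alone only produces one mutation step. Once you make this explicit, your argument coincides with the paper's.
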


For convenience of readers, we give a proof of Theorem \ref{thm:equiv-nu-silting-discrete}.
We need the following lemma.

\begin{lemma}\label{lem:technical-lem}
Fix a basic $\nu$-stable silting object $A$ of $\mathcal{T}$ and an integer $d\ge 1$.
Let $M\in \ssilt{(d+1)_{A}}^{\nu}\mathcal{T}$.
Then the following statements hold.
\begin{itemize}
\item[(1)] Let $A'\in \ssilt{2_{A}}^{\nu}\mathcal{T}$ with $A'\neq A[1]$ and $A'\ge M \ge A[d]$.
If $M$ is not in $\ssilt{d_{A'}}^{\nu}\mathcal{T}$, then there exists an irreducible $\nu$-stable mutation $A''$ of $A'$ such that $A'>A''\ge \{ M,A[1] \}$.
\item[(2)] If $\ssilt{2_{A}}^{\nu}\mathcal{T}$ is a finite set, then there exists $N\in \ssilt{2_{A}}^{\nu}\mathcal{T}$ such that $M\in \ssilt{d_{N}}^{\nu}\mathcal{T}$.
\end{itemize}
\end{lemma}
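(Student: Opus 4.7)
For part (1), my plan is to apply Lemma~\ref{lem:int-mutation}(2) to the pair $(A', M)$ and then select a mutation direction that takes us closer to $M$ while remaining above $A[1]$. I first pin down the silting distance $d'$ from $A'$ to $M$. Applying Lemma~\ref{lem:int-mutation}(1) to $(M,N,d,l) = (A, A', 1, 1)$ produces a triangle $P \to A' \to Q[1] \to P[1]$ with $P, Q \in \add A$. Shifting by $d$ and applying $\Hom_{\mathcal{T}}(M,-)$, the hypothesis $M \ge A[d]$, equivalently $\Hom_{\mathcal{T}}(M, A[k]) = 0$ for $k > d$, makes the long exact sequence yield $\Hom_{\mathcal{T}}(M, A'[d+i]) = 0$ for all $i \ge 1$, so $M \ge A'[d]$. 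Combined with the hypothesis $M \not\ge A'[d-1]$, the minimal $d'$ with $M \ge A'[d']$ equals $d$. Now Lemma~\ref{lem:int-mutation}(2) applied to $(A', M, d)$ produces a triangle $A_d' \to M \to A_d'' \to A_d'[1]$ with $0 \ne A_d'' \in \add A'[d]$, and $A' > \mu_W(A') \ge M$ for every basic nonzero direct summand $W$ of $A_d''[-d]$. Because $A'$ and $M$ are $\nu$-stable, Lemma~\ref{lem:isom-triang} shows $A_d''$ is $\nu$-stable, hence so is $A_d''[-d]$ as a direct summand of $A'$.

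The crux of the proof is to choose $X$ as a minimal $\nu$-stable summand of $A_d''[-d]$ that additionally lies in $\add A$. Using $\Hom_{\mathcal{T}}(M, Q[d+1]) = 0$ once more (from $M \ge A[d]$), the same LES produces a surjection $p_* \colon \Hom_{\mathcal{T}}(M, P[d]) \twoheadrightarrow \Hom_{\mathcal{T}}(M, A'[d])$. Since the minimality of $p \colon P \to A'$ forces every common summand of $A$ and $A'$ to be a direct summand of $P$, a careful minimality comparison between left approximations into $\add A'[d]$ and into $\add P[d]$ shows that $A_d''$ lies in $\add P[d] \subseteq \add A[d]$. Hence $A_d''[-d] \in \add A' \cap \add A$, and any minimal $\nu$-stable summand $X$ of $A_d''[-d]$ automatically lies in $\add A$. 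Set $A'' := \mu_X(A')$. Proposition~\ref{prop:nu-silting-mutation} and the minimality of $X$ yield that $A''$ is an irreducible $\nu$-stable silting object with $A' > A'' \ge M$ (the latter by Lemma~\ref{lem:int-mutation}(2)). For $A'' \ge A[1]$, write $A' = X \oplus N$ with mutation triangle $X \xto{f} Y \to Z \to X[1]$, so $A'' = Z \oplus N$; the LES for $\Hom_{\mathcal{T}}(-, A[k])$ reads $\Hom_{\mathcal{T}}(X, A[k-1]) \to \Hom_{\mathcal{T}}(Z, A[k]) \to \Hom_{\mathcal{T}}(Y, A[k])$, whose outer terms vanish for all $k \ge 2$ (the right because $Y \in \add N \subseteq \add A'$ and $A' \ge A[1]$; the left because $X \in \add A$ silting forces $\Hom_{\mathcal{T}}(X, A[j]) = 0$ for all $j \ge 1$). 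Combined with $\Hom_{\mathcal{T}}(N, A[k]) = 0$, this gives $A'' \ge A[1]$.

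For part (2), I iterate part (1). Starting from $L_0 := A \in \ssilt{2_{A}}^{\nu}\mathcal{T}$, at each step either $M \in \ssilt{d_{L_i}}^{\nu}\mathcal{T}$, in which case $N := L_i$ works, or part (1) applied to $A' = L_i$ yields an irreducible $\nu$-stable mutation $L_{i+1} \in \ssilt{2_{A}}^{\nu}\mathcal{T}$ with $L_i > L_{i+1} \ge M$. The side case $L_i = A[1]$ is harmless: $A[1] \ge M \ge A[d]$ automatically puts $M \in \ssilt{d_{A[1]}}^{\nu}\mathcal{T}$. Finiteness of $\ssilt{2_{A}}^{\nu}\mathcal{T}$ together with strict descent forces termination at the required $N$.

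The main obstacle is the identification $A_d''[-d] \in \add A$ in part (1). The ingredients (the surjection $p_*$ and the inclusion $A \cap A' \subseteq P$) are immediate, but converting these into the statement that the minimal left $\add A'[d]$-approximation factors through $\add P[d]$ requires a careful minimality argument relating left approximations with respect to the two additive categories $\add A'[d]$ and $\add P[d]$, and this is where most of the effort must go.
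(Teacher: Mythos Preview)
Your argument for part~(2) matches the paper's. For part~(1), your opening paragraph (establishing $M\ge A'[d]$ and extracting the $\nu$-stable object $A_d''[-d]=:P'\in\add A'$ from Lemma~\ref{lem:int-mutation}) is correct and parallels the paper. The divergence is in how you obtain $A''\ge A[1]$, and here there is a genuine gap.

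You assert that $P'\in\add A$, arguing via the surjection $p_*\colon\Hom_{\mathcal T}(M,P[d])\twoheadrightarrow\Hom_{\mathcal T}(M,A'[d])$ and an unspecified ``careful minimality comparison''. But the surjection only says that every morphism $M\to A'[d]$ \emph{factors through} an object of $\add A[d]$; it does not force the \emph{target} $P'[d]$ of the minimal left $\add A'[d]$-approximation to lie in $\add A[d]$. The target is, by definition of the approximation, an object of $\add A'[d]$, and a factorisation $M\to P^n[d]\to P'[d]$ through $\add A[d]$ places no restriction on which indecomposable summands of $A'$ occur in $P'$. Your sketch does not bridge this, and you acknowledge as much. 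Without $X\in\add A$ your long exact sequence argument for $\Hom_{\mathcal T}(Z,A[2])=0$ collapses, since for $X\in\add A'$ one only knows $\Hom_{\mathcal T}(X,A[j])=0$ for $j\ge 2$, not for $j=1$.

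The paper avoids this entirely by a second application of Lemma~\ref{lem:int-mutation}, now to $A[1]\in\ssilt{2_{A'}}^{\nu}\mathcal T$, producing a triangle $Q'\to R'\to A[1]\to Q'[1]$ with $Q',R'\in\add A'$ and $\add A'=\add(Q'\oplus R')$. Then \cite[Lemma~2.25]{AI12} gives $\add P'\cap\add R'=\{0\}$, hence $P'\in\add Q'$. Since $Q'$ is precisely the object $M_1''[-1]$ for $A[1]$ relative to $A'$, any minimal $\nu$-stable summand $X$ of $P'$ is automatically a summand of $Q'$, and Lemma~\ref{lem:int-mutation}(2) applied to the $A[1]$-triangle yields $\mu_X(A')\ge A[1]$ directly---no long exact sequence and no need for $X\in\add A$. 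This is both shorter and avoids the unproven step in your argument.
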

\begin{proof}
(1) By our assumption, $M\in \ssilt{(d+1)_{A'}}^{\nu}\mathcal{T}\setminus \ssilt{d_{A'}}^{\nu}\mathcal{T}$.
Then it follows from Lemma \ref{lem:int-mutation} that there exists a triangle 
\begin{align}
M'\to M \xto{f} P'[d]\xto{f'}M'[1] \notag
\end{align}
such that $M'\in \add A'\ast \cdots \ast \add A'[d-1]$, $0\neq P'\in \add A'$, $f$ is a minimal left $\add A'[d]$-approximation of $M$ and $f'$ belongs to the radical of $\mathcal{T}$.
By Lemma \ref{lem:isom-triang}, $P'$ is $\nu$-stable. 
Moreover, by a similar argument, $A[1]\in \ssilt{2_{A'}}^{\nu}\mathcal{T}\setminus \ssilt{1_{A'}}^{\nu}\mathcal{T}$ induces a triangle 
\begin{align}
Q'\xto{g'}R'\xto{g''} A[1]\xto{g} Q'[1]\notag
\end{align}
such that $R',Q'\in \add A'$ are $\nu$-stable, $g$ is a minimal left $\add A'[1]$-approximation of $A[1]$ and $g'$ is in the radical of $\mathcal{T}$.
Since $A$ and $A'$ are silting, so is $Q'\oplus R'$.
This implies $\add A'=\add(Q'\oplus R')$.
On the other hand, it follows from \cite[Lemma 2.25]{AI12} that $\add P'\cap \add R'=\{ 0\}$.
Hence we obtain $P'\in \add Q'$.
Take a minimal $\nu$-stable direct summand $X$ of $P'$.
By Lemma \ref{lem:int-mutation}(2), $A'> \mu_{X}(A') \geq\{ M,A[1]\}$.
Hence, putting $A'':=\mu_{X}(A')$, we have the assertion.

(2) If $M\in \ssilt{d_{A}}^{\nu}\mathcal{T}$, then there is nothing to prove.
In the following, we assume $M\notin \ssilt{d_{A}}^{\nu}\mathcal{T}$.
By (1), there exists an irreducible $\nu$-stable mutation $A'$ of $A$ such that $A>A'\ge \{ M,A[1] \}$.
Hence $A'\in \ssilt{2_{A}}^{\nu}\mathcal{T}$ and $M\in \ssilt{(d+1)_{A'}}^{\nu}\mathcal{T}$.
If $M\in \ssilt{d_{A'}}^{\nu}\mathcal{T}$, then we obtain the desired result.
We assume $M\notin \ssilt{d_{A'}}^{\nu}\mathcal{T}$.
If $A'=A[1]$, then $M\in \ssilt{(d+1)_{A}}^{\nu}\mathcal{T}$ implies $\Hom(M,A[d])=0$ and hence $M\in \ssilt{d_{A'}}^{\nu}\mathcal{T}$ a contradiction.
Therefore we assume $A'\neq A[1]$.
By (1), there exists an irreducible $\nu$-stable mutation $A''$ of $A'$ such that $A'>A''\ge \{ M, A[1]\}$.
Thus, we obtain $A''\in \ssilt{2_{A}}^{\nu}\mathcal{T}$ and $M\in\ssilt{(d+1)_{A''}}^{\nu}\mathcal{T}$.
By repeated use of this argument, we have a sequence of irreducible $\nu$-stable mutations in $\ssilt{2_{A}}^{\nu}\mathcal{T}$.
Since $\ssilt{2_{A}}^{\nu}\mathcal{T}$ is finite, this procedure stops after a finite number of steps.
Therefore, there exists $N\in \ssilt{2_{A}}^{\nu}\mathcal{T}$ such that $M\in \ssilt{d_{N}}^{\nu}\mathcal{T}$.
This finishes the proof.
\end{proof}

Now we are ready to prove Theorem \ref{thm:equiv-nu-silting-discrete}.

\begin{proof}[Proof of Theorem \ref{thm:equiv-nu-silting-discrete}]
(1)$\Rightarrow$(2) and (1)$\Rightarrow$(3) clearly hold.

(2)$\Rightarrow$(1): The proof is the same as in \cite[Proposition 3.8]{Ai13}.

(3)$\Rightarrow$(2): We show that $\ssilt{(d+1)_{A}}^{\nu}\mathcal{T}$ is finite for all $d\ge 0$. 
If $d\le 1$, then this is clear.
Let $d\ge 2$ and $M\in \ssilt{(d+1)_{A}}^{\nu}\mathcal{T}$.
By assumption, the set $\ssilt{2_{A}}^{\nu}\mathcal{T}$ is finite.
Thus it follows from Lemma \ref{lem:technical-lem}(2) that there exists $A_{1}\in \ssilt{2_{A}}^{\nu}\mathcal{T}$ such that $M\in\ssilt{d_{A_{1}}}^{\nu}\mathcal{T}$.
Since $\silt^{\nu}[A_{1},A]$ is a finite set, it follows from Proposition \ref{prop:local-property-nu-mutation}(2) that $A_{1}$ is obtained by a finite sequence of irreducible $\nu$-stable mutations from $A$.
Hence $\ssilt{2_{A_{1}}}^{\nu}\mathcal{T}$ is also a finite set.
By repeated use of this argument, we have
\begin{align}
\displaystyle \ssilt{(d+1)_{A_{0}}}^{\nu}\mathcal{T}\subseteq \bigcup_{A_{1}\in\ssilt{2_{A_{0}}}^{\nu}\mathcal{T}}\bigcup_{A_{2}\in\ssilt{2_{A_{1}}}^{\nu}\mathcal{T}}\cdots\bigcup_{A_{d-1}\in\ssilt{2_{A_{d-2}}}^{\nu}\mathcal{T}}\ssilt{2_{A_{d-1}}}^{\nu}\mathcal{T}, \notag
\end{align}
where $A_{0}:=A$.
Due to the construction, the set $\ssilt{2_{A_{i}}}^{\nu}\mathcal{T}$ is finite for each $i\ge 0$.
This implies that the set $\ssilt{(d+1)_{A}}^{\nu}\mathcal{T}$ is finite.
\end{proof}

We can recover \cite[Theorem 1.2]{AM17}.
\begin{corollary}\label{cor:AM17-thm}
Let $A$ be a finite dimensional algebra $($respectively, a finite dimensional self-injective algebra$)$ and $\nu$ an identity functor $($respectively, a Serre functor$)$.
Then the following conditions are equivalent.
\begin{itemize}
\item[(1)] $\Kb(\proj A)$ is silting-discrete $($respectively, tilting-discrete$)$.
\item[(2)] For each basic silting $($respectively, tilting$)$ object $M$ obtained by finite sequence of irreducible $\nu$-stable mutations from $A$, the set $\ssilt{2_{M}}^{\nu}\Kb(\proj A)$ is finite.
\end{itemize}
\end{corollary}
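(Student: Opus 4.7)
The plan is to deduce the corollary directly from Theorem \ref{thm:equiv-nu-silting-discrete} by translating the two special choices of $\nu$ into the appropriate dictionaries already recorded in the paper. For both cases I would invoke Theorem \ref{thm:equiv-nu-silting-discrete} with $A$ itself playing the role of the distinguished basic $\nu$-stable silting object, so the equivalence (1)$\Leftrightarrow$(3) of that theorem gives exactly the statement of the corollary once the three substitutions $\silt^{\nu}\mathcal{T}\leftrightarrow\silt\mathcal{T}$ (or $\tilt\mathcal{T}$), irreducible $\nu$-stable mutation $\leftrightarrow$ irreducible mutation, and $\ssilt{2_{M}}^{\nu}\mathcal{T}\leftrightarrow \ssilt{2_{M}}\mathcal{T}$ (or its tilting analogue) have been verified.

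For the first case I would observe that if $\nu$ is functorial isomorphic to the identity functor of $\mathcal{T}=\Kb(\proj A)$, then every object is $\nu$-stable. Consequently $\silt^{\nu}\mathcal{T}=\silt\mathcal{T}$ and every direct summand of a basic silting object is trivially $\nu$-stable, so a minimal $\nu$-stable direct summand is precisely an indecomposable direct summand; in particular irreducible $\nu$-stable mutations coincide with irreducible silting mutations. Hence $\nu$-stable silting-discreteness coincides with silting-discreteness and the corollary in this case becomes exactly the equivalence (1)$\Leftrightarrow$(3) of Theorem \ref{thm:equiv-nu-silting-discrete}.

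For the second case, with $A$ self-injective and $\nu=\kD\Hom_{A}(-,A)$ the Serre functor on $\mathcal{T}=\Kb(\proj A)$, Proposition \ref{prop:selfinjective-tilting} yields $\silt^{\nu}\mathcal{T}=\tilt\mathcal{T}$, so $A$ is $\nu$-stable silting-discrete if and only if it is tilting-discrete, and the intervals $\ssilt{(d+1)_{M}}^{\nu}\mathcal{T}$ are precisely the intervals of tilting objects $N$ satisfying $M\ge N\ge M[d]$. It remains to recognise that an irreducible $\nu$-stable mutation of a basic tilting object (in our sense) is nothing other than a tilting mutation in the sense of Chan--Koenig--Liu \cite{CKL15}; this is essentially the content of Proposition \ref{prop:nu-silting-mutation} combined with the definition of a minimal $\nu$-stable direct summand, and is the dictionary used implicitly throughout \cite{AM17}. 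Then Theorem \ref{thm:equiv-nu-silting-discrete}(1)$\Leftrightarrow$(3) becomes exactly \cite[Theorem 1.2]{AM17}.

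There is no real obstacle: the proof is a matter of bookkeeping, and the only point requiring a moment's thought is the matching of mutation notions in the self-injective case, which is already guaranteed by the results in Subsection 2.3. Thus the proof reduces to a one-line invocation of Theorem \ref{thm:equiv-nu-silting-discrete} together with Example 2.10 and Proposition \ref{prop:selfinjective-tilting}.
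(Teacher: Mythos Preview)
Your proposal is correct and matches the paper's approach: the corollary is stated without proof in the paper, precisely because it is the specialization of Theorem \ref{thm:equiv-nu-silting-discrete}(1)$\Leftrightarrow$(3) through the dictionaries recorded in the example immediately preceding it (identity $\nu$ gives silting, Serre $\nu$ on a self-injective algebra gives tilting via Proposition \ref{prop:selfinjective-tilting}). Your bookkeeping of the mutation notions is exactly what is needed and is already implicit in the paper's development in Subsection 2.3.
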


As an application, we give an example of $\nu$-stable silting-discrete triangulated categories.
\begin{example}
Let $A$ be a representation-finite self-injective algebra over an algebraically closed field and $\nu$ a Serre functor.
Then $\ssilt{2_{A}}^{\nu}\Kb(\proj A)$ is a finite set.
Since the class of representation-finite self-injective algebras is derived invariant, $\Kb(\proj A)$ is $\nu$-stable silting-discrete, and hence tilting-discrete.
\end{example}

The following theorem is one of our main results of this paper.
\begin{theorem}\label{thm:weakly-symmetric-silting-discrete}
Assume that $\mathcal{T}$ admits a weakly symmetric silting object.
\begin{itemize}
\item[(1)] The following statements are equivalent:
\begin{itemize}
\item[(a)] $\mathcal{T}$ is silting-discrete.
\item[(b)] $\mathcal{T}$ is $\nu$-stable silting-discrete.
\end{itemize}
In this case, all silting objects are weakly symmetric.
\item[(2)] Moreover, if $\nu$ is a Serre functor, then the following statement is also equivalent to $(a)$ and $(b)$:
\begin{itemize}
\item[(c)] $\mathcal{T}$ is tilting-discrete.
\end{itemize}
In this case, all silting objects are tilting.
\end{itemize}
\end{theorem}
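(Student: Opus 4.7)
The direction $(a)\Rightarrow(b)$ in (1) is immediate from $\silt^{\nu}\mathcal{T}\subseteq\silt\mathcal{T}$, which restricts to $\ssilt{(d+1)_{M}}^{\nu}\mathcal{T}\subseteq\ssilt{(d+1)_{M}}\mathcal{T}$ for every $M\in\silt^{\nu}\mathcal{T}$; the weakly symmetric silting object in the hypothesis guarantees that $\silt^{\nu}\mathcal{T}$ is non-empty. The same restriction argument, applied via Proposition \ref{prop:nu-silting-tilting} (which under the Serre hypothesis of (2) yields $\silt^{\nu}\mathcal{T}\subseteq\tilt\mathcal{T}$), will give $(c)\Rightarrow(b)$.

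The substantive step is $(b)\Rightarrow(a)$. The plan is to prove the stronger assertion that every silting object of $\mathcal{T}$ is weakly symmetric, which collapses $\silt\mathcal{T}$ to $\silt^{\nu}\mathcal{T}$ and reduces (a) to (b). Let $M$ be the given weakly symmetric silting object and fix $N\in\silt\mathcal{T}$. By \cite[Proposition 2.23]{AI12} there are integers $m_{1}\le m_{2}$ with $M[m_{1}]\ge N\ge M[m_{2}]$. Since $\nu$ commutes with $[1]$, every shift $M[i]$ is again weakly symmetric, so after replacing $M$ by $M[m_{1}]$ one may assume $M\ge N\ge M[d]$ with $d=m_{2}-m_{1}$. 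Then $\silt^{\nu}[N,M]\subseteq\ssilt{(d+1)_{M}}^{\nu}\mathcal{T}$, which is finite by (b). I then invoke Proposition \ref{prop:local-property-nu-mutation}(2) under its condition (b)---applicable since only the weak symmetry of $M$ is needed---to obtain $N$ from $M$ as an iterated irreducible $\nu$-stable mutation. Applying Proposition \ref{prop:wekaly-symmetric-mutation} step by step along this chain shows that weak symmetry is preserved, and hence $N$ is weakly symmetric.

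Once part (1) is in place, part (2) closes quickly. Under (a) (equivalently (b)), combining part (1) with Proposition \ref{prop:nu-silting-tilting} shows every silting object is weakly symmetric, hence $\nu$-stable, hence tilting, so $\silt\mathcal{T}=\tilt\mathcal{T}$. This both delivers the closing assertion ``all silting objects are tilting'' and yields $(b)\Rightarrow(c)$. The main delicate point throughout is invoking Proposition \ref{prop:local-property-nu-mutation}(2) for a silting $N$ not a priori known to be $\nu$-stable: its condition (b) does permit this, but one must confirm that the resulting $\nu$-stable mutation chain terminates at $N$ itself---this is precisely where finiteness of $\silt^{\nu}[N,M]$ combines with the conclusion of that proposition to force $N$ into the chain and thereby into the class of weakly symmetric objects.
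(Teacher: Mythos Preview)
Your proof is correct and follows essentially the same strategy as the paper's: for $(b)\Rightarrow(a)$, both you and the paper sandwich an arbitrary silting object between shifts of the given weakly symmetric silting object, use $(b)$ to get finiteness of the relevant $\nu$-stable interval, and then invoke Proposition~\ref{prop:local-property-nu-mutation}(2) under its hypothesis (b) to conclude that the arbitrary silting object is reached by iterated $\nu$-stable mutation and is therefore weakly symmetric. The only cosmetic difference is that after establishing $\silt\mathcal{T}=\silt^{\nu}\mathcal{T}$ you observe directly that the two notions of discreteness coincide, whereas the paper verifies finiteness of $\ssilt{(d+1)_{A}}\mathcal{T}$ for the fixed $A$ and appeals to Theorem~\ref{thm:equiv-nu-silting-discrete} with $\nu=\mathrm{id}$; your route is slightly more direct. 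Your treatment of part~(2) and of the final ``delicate point'' is also accurate and matches the paper's use of Proposition~\ref{prop:nu-silting-tilting}.
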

\begin{proof}
(1) (a)$\Rightarrow$(b) is clear. 
We prove (b)$\Rightarrow$(a).
We show $\silt\mathcal{T}=\silt^{\nu}\mathcal{T}$.
Let $M\in \silt\mathcal{T}$.
Take a basic weakly symmetric silting object $A$ of $\mathcal{T}$.
Due to \cite[Proposition 2.23]{AI12}, there exists an integer $n>0$ such that $A[-n]\ge M\ge A[n]$.
Then $A':=A[-n]$ is clearly a weakly symmetric silting object and $\ssilt{(2n)_{A'}}^{\nu}\mathcal{T}$ is finite by (b).
Therefore it follows from Proposition \ref{prop:local-property-nu-mutation}(2) that $M$ is weakly symmetric.
In particular, $M\in \silt^{\nu}\mathcal{T}$.
Hence we obtain $\silt\mathcal{T}=\silt^{\nu}\mathcal{T}$.
This implies that, for each $d\ge 0$, the set $\ssilt{(d+1)_{A}}\mathcal{T}=\ssilt{(d+1)_{A}}^{\nu}\mathcal{T}$ is a finite set by (b).
By applying Theorem \ref{thm:equiv-nu-silting-discrete} to the case where $\nu=\mathrm{id}$, $\mathcal{T}$ is silting-discrete.

(2) Assume that $\nu$ is a Serre functor.
By Proposition \ref{prop:nu-silting-tilting}, (a)$\Rightarrow$(c)$\Rightarrow$(b) holds.
Hence the assertion follows from (1).
\end{proof}


As an application of Theorem \ref{thm:weakly-symmetric-silting-discrete}, we have the following result.

\begin{corollary}\label{cor:weakly-symmetric-silting-discrete}
Let $A$ be a weakly symmetric algebra.
Then $A$ is silting-discrete if and only if it is tilting-discrete.
\end{corollary}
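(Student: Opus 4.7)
The plan is to deduce the corollary as a direct application of Theorem \ref{thm:weakly-symmetric-silting-discrete}(2) to $\mathcal{T}:=\Kb(\proj A)$ equipped with the Nakayama functor $\nu:=\kD\Hom_A(-,A)$. Since a weakly symmetric algebra is in particular self-injective, $\nu$ restricts to a triangle auto-equivalence on $\mathcal{T}$ and is a Serre functor, so the hypotheses of part (2) of the theorem are in force as soon as a weakly symmetric silting object of $\mathcal{T}$ is produced.

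The key verification is that $A$ itself, viewed as a stalk complex concentrated in degree $0$, is a weakly symmetric silting object of $\mathcal{T}$ in the sense defined in the paper. It is certainly a tilting, hence silting, object. Writing a basic decomposition $A=P_1\oplus\cdots\oplus P_n$ into indecomposable projectives, $\nu P_i$ is again an indecomposable projective, and the induced permutation $v_A$ on the index set is precisely the Nakayama permutation $\pi$ of $A$. By the very definition of ``weakly symmetric algebra'', $\pi$ is the identity, so $v_A=\mathrm{id}$ and $A$ is a weakly symmetric $\nu$-stable silting object of $\mathcal{T}$ in the paper's terminology.

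Given this, Theorem \ref{thm:weakly-symmetric-silting-discrete}(2) applies and yields the equivalence of the three conditions: $\mathcal{T}$ silting-discrete, $\mathcal{T}$ being $\nu$-stable silting-discrete, and $\mathcal{T}$ tilting-discrete. Unwinding the definitions of silting-discrete algebra and tilting-discrete algebra in terms of $\Kb(\proj A)$, the equivalence (a)$\Leftrightarrow$(c) is exactly the claim that $A$ is silting-discrete if and only if it is tilting-discrete.

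There is essentially no obstacle here: the only content is the identification of $A$ as a weakly symmetric $\nu$-stable silting object of $\mathcal{T}$, which is literally the definition of ``weakly symmetric'' translated into the Nakayama functor language. All the real work — the mutation arguments showing that intermediate silting objects remain weakly symmetric, and the reduction of silting-discreteness to $\nu$-stable silting-discreteness — is already encapsulated in Theorem \ref{thm:weakly-symmetric-silting-discrete}.
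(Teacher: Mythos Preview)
Your proof is correct and follows essentially the same approach as the paper: verify that $A$ is a weakly symmetric silting object of $\Kb(\proj A)$, observe that $\Kb(\proj A)$ admits a Serre functor $\nu$ (the paper invokes the Iwanaga--Gorenstein property, you use self-injectivity, which is equally valid), and then apply Theorem~\ref{thm:weakly-symmetric-silting-discrete}(2). Your added detail unpacking why the Nakayama permutation being the identity makes $A$ a weakly symmetric $\nu$-stable object is exactly what the paper's ``Clearly'' is hiding.
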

\begin{proof}
Clearly, $A$ is a weakly symmetric silting object of $\Kb(\proj A)$.
On the other hand, since a weakly symmetric algebra is an Iwanaga--Gorenstein algebra, $\Kb(\proj A)$ admits a Serre functor $\nu$.
Therefore the assertion follows from Theorem \ref{thm:weakly-symmetric-silting-discrete}(2).
\end{proof}

We give concrete examples for Corollary \ref{cor:weakly-symmetric-silting-discrete}.

\begin{example}\label{ex:preprojective}
Let $A$ be the preprojective algebra of one of Dynkin diagrams $\mathbf{D}_{2n}$, $\mathbf{E}_{7}$ and $\mathbf{E}_{8}$.
Then $A$ is weakly symmetric (see \cite{BBK02}) and tilting-discrete by \cite[Theorem 1.3]{AM17}.
By Corollary \ref{cor:weakly-symmetric-silting-discrete}, $A$ is silting-discrete.
\end{example}

Remark that Example \ref{ex:preprojective} can be obtained by \cite[Theorem 1.1]{AM17} because $\Delta=\Delta^{\mathrm{f}}$ holds for $\Delta=\mathbf{D}_{2n}$, $\mathbf{E}_{7}$ or $\mathbf{E}_{8}$.

\section{The first example: trivial tilting-discrete case}
Let $A$ be a basic connected non-semisimple self-injective algebra over an algebraically closed field $K$ and let $\nu=\nu_{A}:=\kD\Hom_{A}(-,A)$ a Nakayama functor. Note that $\nu$ is a Serre functor in $\Kb(\proj A)$.
Our aim of this section is to give a construction of a self-injective algebra $\widetilde{A}$ satisfying the following two properties:
\begin{itemize}
\item $\tilt{\Kb(\proj \widetilde{A})}=\{ A[i]\mid i\in \mathbb{Z}\}$. In particular, $\widetilde{A}$ is tilting-discrete.
\item $\widetilde{A}$ is not silting-discrete.
\end{itemize}

Let $Q=(Q_{0},Q_{1})$ be a finite quiver, where $Q_{0}$ is the vertex set and $Q_{1}$ is the arrow set of $Q$.
We denote by $KQ_{l}$ the subspace of $KQ$ generated by all paths of length $l$.
Define a new quiver $\widetilde{Q}=(\widetilde{Q}_{0},\widetilde{Q}_{1})$ as $\widetilde{Q}_{0}:=Q_{0}$ and $\widetilde{Q}_{1}:=Q_{1}^{+}\coprod Q_{1}^{-}$, where $Q_{1}^{+}:=\{ a^{+}\mid a\in Q_{1}\}$ and $Q_{1}^{-}:=\{ a^{-}\mid a\in Q_{1}\}$.
The correspondences $a\mapsto a^{\pm}$ induce $K$-linear isomorphisms $(-)^{\pm}:KQ_{1}\to KQ_{1}^{\pm}$ and moreover, they are extended to $K$-linear isomorphisms $(-)^{\pm}:\oplus_{l\ge 1}KQ_{l}\to\oplus_{l\ge 1} KQ_{l}^{\pm}$.

Assume that $A=KQ/I$ is self-injective, where $I$ is an admissible ideal of $KQ$.
Let $I^{d}:=\langle a^{+}b^{-}, a^{-}b^{+}\rangle_{K\widetilde{Q}}$ and $I^{c}:=\oplus_{i\in Q_{0}}K(p_{i}^{+}-p_{i}^{-})$, where $p_{i}\notin I$ is a path in $KQ$ with maximal length starting from $i\in Q_{0}$.
Define a subspace $\widetilde{I}$ of $K\widetilde{Q}$ by $\widetilde{I}:=I^{+}+I^{-}+I^{d}+I^{c}$.
Then we have the following lemma.

\begin{lemma}
The following statements hold.
\begin{itemize}
\item[(1)] The subspace $\widetilde{I}$ is a two-sided ideal of $K\widetilde{Q}$.
\item[(2)] If $\soc P(i)\subset \rad^{2}A$ for each $i\in Q_{0}$, then $\widetilde{I}$ is admissible.
\end{itemize}
\end{lemma}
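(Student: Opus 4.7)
\emph{Part (1).} My plan is to decompose $\widetilde{I}=J+I^{c}$ with $J:=I^{+}+I^{-}+I^{d}$ and verify the ideal property in two stages. First I show $J$ is a two-sided ideal of $K\widetilde{Q}$: every path of $\widetilde{Q}$ is either \emph{pure} (entirely $+$ or entirely $-$) or contains a mixed adjacency, and the mixed paths are exactly the spanning elements of $I^{d}$. Extending $(-)^{\pm}$ to algebra embeddings of $KQ$ onto the two pure subalgebras, multiplying any element of $I^{\pm}$ by a matching-sign arrow keeps it in $I^{\pm}$ (since $I$ is an ideal of $KQ$), while multiplying by an opposite-sign arrow produces a mixed path that lands in $I^{d}$. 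Second, I absorb $I^{c}$ by checking its generators $z_{i}:=p_{i}^{+}-p_{i}^{-}$ against each arrow.

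The right-multiplication case is straightforward: for $a$ starting at $t(p_{i})$,
\[
z_{i}\cdot a^{+}=(p_{i}a)^{+}-p_{i}^{-}a^{+},
\]
whose second summand is mixed (hence in $I^{d}$) and whose first summand lies in $I^{+}$ since $p_{i}a\in I$ by the maximality of $p_{i}$. The left-multiplication case $a^{+}\cdot z_{i}=(ap_{i})^{+}-a^{+}p_{i}^{-}$ again produces a mixed summand in $I^{d}$ and reduces to proving $ap_{i}\in I$ for every arrow $a$ ending at $i$. This is the main obstacle. My plan is to invoke the Nakayama permutation $\sigma$ of the self-injective algebra $A$: the class $[p_{i}]$ spans the one-dimensional $\soc(e_{i}A)\cong S(\sigma(i))$ and therefore lies in $e_{i}Ae_{\sigma(i)}$. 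Since $[p_{i}]\cdot\rad A=0$, also $a[p_{i}]\cdot\rad A=0$, placing $a[p_{i}]$ both in $\soc(e_{s(a)}A)=K[p_{s(a)}]\subseteq e_{s(a)}Ae_{\sigma(s(a))}$ and in $e_{s(a)}Ae_{\sigma(i)}$. When $s(a)\neq i$ the injectivity of $\sigma$ makes these subspaces meet only at zero, forcing $ap_{i}\in I$; the remaining loop case $s(a)=i$ is settled directly by maximality, since $ap_{i}$ is then a path of length $|p_{i}|+1$ starting at $i$ and so lies in $I$. Symmetric computations handle $a^{-}z_{i}$, and all remaining cases vanish by idempotent incompatibility.

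\emph{Part (2).} Admissibility of $\widetilde{I}$ splits into $\widetilde{I}\subseteq \rad^{2}K\widetilde{Q}$ and $\rad^{N}K\widetilde{Q}\subseteq \widetilde{I}$ for some $N$. For the first inclusion, admissibility of $I$ yields $I^{\pm}\subseteq \rad^{2}K\widetilde{Q}$, $I^{d}$ is generated by length-two paths, and the hypothesis $\soc P(i)\subseteq \rad^{2}A$ translates into $|p_{i}|\geq 2$ for every vertex $i$, which places each $z_{i}$ in $\rad^{2}K\widetilde{Q}$. For the second, fix $L$ with $\rad^{L}KQ\subseteq I$; any path in $\widetilde{Q}$ of length at least $L$ is either pure, in which case its preimage under $(-)^{\pm}$ lies in $I$ and hence the path itself lies in $I^{+}+I^{-}$, or it contains a mixed adjacency and so lies in $I^{d}$. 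This gives $\rad^{L}K\widetilde{Q}\subseteq J\subseteq \widetilde{I}$, completing the proof.
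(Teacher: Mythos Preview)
Your proof is correct and follows essentially the same route as the paper. Both arguments set $J=I^{+}+I^{-}+I^{d}$, observe that $J$ is already a two-sided ideal and that right absorption of $I^{c}$ follows from the maximality of $p_{i}$, and then reduce the left-ideal check to the key claim $ap_{i}\in I$ for every arrow $a$ ending at $i$; your Nakayama-permutation computation is precisely the explicit form of the paper's terse line ``$\soc P(i)=\soc P(h)$, a contradiction to self-injectivity'', and your treatment of part (2) spells out both inclusions that the paper summarises in one sentence.
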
 
\begin{proof}
(1) We can easily check that $I^{+}+I^{-}+I^{d}$ is a two-side ideal and $\widetilde{I}$ is a right ideal.
To complete the proof, we show $a^{\pm}(p_{i}^{+}-p_{i}^{-})\in \widetilde{I}$, that is, $a^{+}p_{i}^{+}, a^{-}p_{i}^{-}\in \widetilde{I}$ for all $i\in Q_{0}$ and $a\in Q_{1}$.
Thus it is enough to claim $ap_{i}\in I$. 
Indeed, if it is true, then $a^{\pm}p_{i}^{\pm}=(ap_{i})^{\pm}\in I^{\pm}$.
Suppose to contrary that $ap_{i}\notin I$.
If $a$ is a loop, it contradicts to the maximality of the length of $p_{i}$.  
On the other hand, if $a:h\to i$ is not a loop, then $ap_{i}\in \soc P(h)$.
This implies $\soc P(i)=\soc P(h)$, a contradiction to self-injectivity.

(2) For each $i\in Q_{0}$, the length of $p_{i}$ is at least two by our assumption.
Since $I$ is an admissible ideal, we have the assertion.
\end{proof}

The following theorem is one of main results in this paper.

\begin{theorem}\label{thm:local-tilting}
Assume that $\soc P(i)\subset \rad^{2}A$ for each $i\in Q_{0}$.
Let $\widetilde{A}:=K\widetilde{Q}/\widetilde{I}$. 
Then the following statements hold.
\begin{itemize}
\item[(1)] $\widetilde{A}$ is a basic self-injective algebra. 
\item[(2)] $\widetilde{A}$ is not silting-discrete.
\item[(3)] $A$ is $\nu_{A}$-cyclic object in $\Kb(\proj A)$ if and only if $\widetilde{A}$ is $\nu_{\widetilde{A}}$-cyclic object in $\Kb(\proj \widetilde{A})$. 
\item[(4)] If the equivalence condition in \textup{(3)} is satisfied, then we have 
\begin{align}
\tilt \Kb(\proj \widetilde{A}) =\{ \widetilde{A}[i]\mid i\in \mathbb{Z}\}.\notag
\end{align} 
In particular, $\widetilde{A}$ is a tilting-discrete algebra.
\end{itemize}
\end{theorem}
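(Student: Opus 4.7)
The plan proceeds part by part: (1) and (3) follow from a direct structural analysis of $\widetilde{A}$ at the quiver level; (4) assembles facts already established in Section 2; and (2), the substantive content of the theorem, requires constructing an infinite family of 2-term silting objects.

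For (1), I would describe each indecomposable projective $\widetilde{P}(i) := \widetilde{A} e_i$ explicitly. Every path in $\widetilde{Q}$ starting at $i$ either mixes the two signs (in which case it vanishes modulo $I^d$) or uses only one sign, and the two maximal pure-sign paths $p_i^+$ and $p_i^-$ are identified modulo $I^c$. Hence $\widetilde{P}(i)$ is a ``branch sum'' of two copies of $P_A(i)$ glued along the top $e_i$ and along the common socle element $s_i := [p_i^+] = [p_i^-]$; in particular it is indecomposable with one-dimensional socle $\widetilde{S}(\pi_A(i))$, where $\pi_A$ is the Nakayama permutation of $A$. This gives (1) together with the identity $\pi_{\widetilde{A}} = \pi_A$, from which (3) is immediate, since both cyclicity conditions assert that this common permutation acts transitively on $Q_0 = \widetilde{Q}_0$.

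For (4), assume the common cyclicity established in (3). Proposition \ref{prop:cyclic-silting} applied to the $\nu_{\widetilde{A}}$-cyclic silting object $\widetilde{A}$ gives $\silt^{\nu_{\widetilde{A}}}\Kb(\proj \widetilde{A}) = \{\widetilde{A}[i] : i \in \mathbb{Z}\}$; Proposition \ref{prop:selfinjective-tilting} (applicable by (1)) then upgrades every tilting object to a $\nu_{\widetilde{A}}$-stable silting object, so $\tilt \Kb(\proj \widetilde{A}) \subseteq \{\widetilde{A}[i] : i \in \mathbb{Z}\}$, and the reverse inclusion is clear since $\widetilde{A}$ is self-injective. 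Tilting-discreteness follows because the interval $\{N \in \tilt \Kb(\proj \widetilde{A}) : \widetilde{A}[i] \ge N \ge \widetilde{A}[i+d]\}$ reduces to the finite string $\widetilde{A}[i], \widetilde{A}[i+1], \ldots, \widetilde{A}[i+d]$.

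The main obstacle is (2). I would fix an arrow $a : i \to j$ of $Q$, available because $A$ is connected and non-semisimple, and exploit the fact that $\Hom_{\widetilde{A}}(\widetilde{P}(j), \widetilde{P}(i))$ contains a two-dimensional subspace spanned by $a^+$ and $a^-$. For each $\lambda \in K$, set $f_\lambda : \widetilde{P}(j) \to \widetilde{P}(i)$ to be the morphism corresponding to $a^+ + \lambda a^-$, and let $M_\lambda := \operatorname{coker} f_\lambda$. A direct lifting argument, using the structure of $\End_{\widetilde{A}}(\widetilde{P}(i))$ obtained in (1), forces any isomorphism $M_\lambda \to M_\mu$ to come from a scalar endomorphism of $\widetilde{P}(i)$; comparison of relations then gives $\lambda = \mu$, so the family $\{M_\lambda\}_{\lambda \in K}$ consists of pairwise non-isomorphic modules. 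The key computation, and the real obstacle, is to show $M_\lambda$ is $\tau$-rigid for infinitely many $\lambda$: one computes $\tau M_\lambda$ from its minimal projective presentation using the identification $\nu_{\widetilde{A}} \widetilde{P}(k) = \widetilde{P}(\pi_A(k))$ from (1), and identifies $\tau M_\lambda$ with another member $M_{\sigma(\lambda)}$ of the same family for some non-identity involution $\sigma$ of $\mathbb{P}^1(K)$, so that $\Hom(M_\lambda, \tau M_\lambda) = 0$ for all but finitely many $\lambda$. Each such $M_\lambda$ extends to a basic 2-term silting object of $\Kb(\proj \widetilde{A})$, and distinct $\lambda$'s give non-isomorphic silting objects, proving that $\widetilde{A}$ admits infinitely many basic 2-term silting objects and hence is not silting-discrete.
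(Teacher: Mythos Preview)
Your treatment of (4) matches the paper's exactly, and your handling of (1) and (3) is different from the paper's but reasonable. The paper does not work directly at the quiver level: it realises $\widetilde{A}$ as $\Gamma/J''$, where $\Gamma$ is the subalgebra $\{(a,a')\in A\times A : a-a'\in\rad A\}$ and $J''$ is cut out by the bilinear form of the Frobenius algebra $A\times A$; self-injectivity and the identification of Nakayama permutations then follow from an abstract criterion (their Proposition~\ref{prop:const-frob} and Lemma~\ref{lem:key-thm32-1}). Your socle computation for $\widetilde{P}(i)$ gives the same conclusions more concretely, at the cost of being specific to this one construction; the paper's route explains \emph{why} $\widetilde{A}$ is Frobenius in a way that would adapt to other ``doubling'' constructions.

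For (2), however, your plan is both much harder than necessary and contains a genuine gap. The paper's argument is one line: since $Q$ has an arrow $a:i\to j$, the quiver $\widetilde{Q}$ has the parallel pair $a^{+},a^{-}:i\to j$, so the Kronecker path algebra is a \emph{factor algebra} of $\widetilde{A}$; but Kronecker is not $\tau$-tilting finite, and $\tau$-tilting finiteness passes to factor algebras (\cite{DIRRT}), hence $\ssilt{2_{\widetilde{A}}}\Kb(\proj\widetilde{A})$ is infinite. No explicit family needs to be built.

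Your proposed construction, by contrast, has two problems. First, $f_{\lambda}:\widetilde{P}(j)\to\widetilde{P}(i)$ is not injective: the argument of Lemma~3.1(1) shows $ap_{j}=0$ in $A$, so $p_{j}^{+}\in\ker f_{\lambda}$, and thus $\widetilde{P}(j)\xrightarrow{f_{\lambda}}\widetilde{P}(i)$ is not the full minimal projective presentation of $M_{\lambda}$; the asserted identification $\tau M_{\lambda}\cong M_{\sigma(\lambda)}$ is therefore unsupported. Second, even granting that identification, the step ``$\tau M_{\lambda}\cong M_{\sigma(\lambda)}$ with $\sigma(\lambda)\neq\lambda$, \emph{so} $\Hom(M_{\lambda},\tau M_{\lambda})=0$'' is a non sequitur: you proved the $M_{\lambda}$ are pairwise non-isomorphic, not pairwise Hom-orthogonal. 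Replace this entire argument with the Kronecker-factor observation.
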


By \cite[Theorem 3.2]{AIR14}, we can translate results of two-term silting theory into results of $\tau$-tilting theory, and vice versa.
Remark that a finite dimensional algebra $\Lambda$ is $\tau$-tilting finite if and only if $\ssilt{2_{\Lambda}}\Kb(\proj \Lambda)$ is a finite set. Moreover, it follows from \cite[Corollary 1.9]{DIRRT} that if $\Lambda$ is $\tau$-tilting finite, then all factor algebras are also $\tau$-tilting finite.  

\begin{proof}
In the rest of this section, the statements (1) and (3) are shown in a more general setting.

(2) Note that $Q$ is a Dynkin quiver if and only if $KQ$ is $\tau$-tilting finite (for example, see \cite[Theorem 2.6]{Ad16}).
Since $\widetilde{A}$ contains the path algebra of Kronecker type as a factor algebra, the set $\ssilt{2_{\widetilde{A}}}\Kb(\proj \widetilde{A})$ is not finite.
Hence $A$ is not a silting-discrete algebra.

(4) By the assumption, $\widetilde{A}$ is a $\nu_{\widetilde{A}}$-cyclic object in $\Kb(\proj \widetilde{A})$.
Thus the assertion follows from Propositions \ref{prop:selfinjective-tilting} and \ref{prop:cyclic-silting}.
\end{proof}

Before proving Theorem \ref{thm:local-tilting}(1) and (3), we give an example of $\widetilde{A}$.

\begin{example}
Let $A=KQ/I$ be a self-injective Nakayama algebra, where $Q=(\xymatrix{1\ar@<0.5ex>[r]^-{a}&2\ar@<0.5ex>[l]^-{b}})$ and $I=\langle abab,baba \rangle$.
Then $\widetilde{A}=K\widetilde{Q}/\widetilde{I}$ is given by
\begin{align}
\widetilde{Q}=\xymatrix{1\ar@<3ex>[r]^-{a^{-}}\ar@<0.5ex>[r]^-{a^{+}}&2\ar@<0.5ex>[l]^-{b^{+}}\ar@<3ex>[l]^-{b^{-}}} \notag
\end{align}
and 
\begin{align}
\widetilde{I}=\langle a^{+}b^{+}a^{+}b^{+}, b^{+}a^{+}b^{+}a^{+}, a^{-}b^{-}a^{-}b^{-},b^{-}a^{-}b^{-}a^{-}, &a^{+}b^{-},a^{-}b^{+}, b^{+}a^{-},b^{-}a^{+},\notag\\
&a^{+}b^{+}a^{+}-a^{-}b^{-}a^{-}, b^{+}a^{+}b^{+}-b^{-}a^{-}b^{-}\rangle.\notag
\end{align}
Thus we obtain
\begin{align}
A_{A}=\begin{smallmatrix}1\\2\\1\\2\end{smallmatrix}\oplus\begin{smallmatrix}2\\1\\2\\1\end{smallmatrix},\hspace{5mm}
\widetilde{A}_{\widetilde{A}}=\begin{smallmatrix}&1&\\2&&2\\1&&1\\&2&\end{smallmatrix}\oplus\begin{smallmatrix}&2&\\1&&1\\2&&2\\&1&\end{smallmatrix}.\notag
\end{align}
Since $\widetilde{Q}$ contains a multiple arrow, $\widetilde{A}$ is not silting-discrete.
On the other hand, we can easily check that $\widetilde{A}$ is a $\nu_{\widetilde{A}}$-cyclic object of $\Kb(\proj \widetilde{A})$. Hence it is a tilting-discrete algebra.
\end{example}

From now we provide a proof of Theorem \ref{thm:local-tilting}(1) and (3) in more general setting.
We start with recalling some properties of basic self-injective algebras.
For detail, see \cite[Chapter IV]{SY11}.
A basic self-injective $K$-algebra $\Lambda$ admits a non-degenerate associative $K$-bilinear form $(-,-)_{\Lambda}:\Lambda\times \Lambda\to K$.
This property induces an algebra automorphism $\mathrm{v}_{\Lambda}:\Lambda\to \Lambda$ with $(\mathrm{v}_{\Lambda}(a),b)_{\Lambda}=(b,a)_{\Lambda}$ for all $a,b\in \Lambda$. 
We call $\mathrm{v}_{\Lambda}$ a \emph{Nakayama automorphism}.
A subspace $\Gamma$ of $\Lambda$ is said to be \emph{$\mathrm{v}_{\Lambda}$-stable} if $\mathrm{v}_{\Lambda}(\Gamma)= \Gamma$.

Let $J$ be a two-sided ideal of $\Lambda$ and $\Gamma$ a subalgebra with $J\subset \Gamma$.
In our convention, the identity of $\Gamma$ coincides with that of $\Lambda$.
Define vector spaces $J',J''$ by
\begin{align}
&J':=\{ \lambda\in \Lambda \mid (J,\lambda)_{\Lambda}=0\},\notag\\
&J'':=\{ \lambda\in \Lambda\mid (\Gamma,\lambda)_{\Lambda}=0\}. \notag
\end{align}
Since $J$ is a two-sided ideal, we can easily check that $J'$ is a two-sided ideal of $\Lambda$.
The following lemma plays an important role in this section.

\begin{lemma}\label{lem:vect-bilinear}
Let $(-,-):=(-,-)_{\Lambda}$ be a non-degenerate associative $K$-bilinear form and $\mathrm{v}:=\mathrm{v}_{\Lambda}$ the Nakayama automorphism associated with $(-,-)$.
Then the restriction $(-,-)_{\Gamma}:=(-,-)|_{\Gamma\times\Gamma}$ is a $K$-bilinear form. 
Moreover, if $J'\subseteq J$, then the following statements hold.
\begin{itemize}
\item[(1)] $J''$ is a vector subspace of $\Gamma$.
\item[(2)] If $\Gamma$ is $\mathrm{v}$-stable, then so is $J''$ and the following statements are equivalent for $\gamma\in \Gamma$.
\begin{itemize}
\item[(a)] $(\gamma,-)_{\Gamma}=0$.
\item[(b)] $\gamma\in J''$.
\item[(c)] $(-,\gamma)_{\Gamma}=0$.
\end{itemize}
In particular, the induced $K$-bilinear form $(-,-)_{\Gamma/J''}:\Gamma/J''\times\Gamma/J''\to K$ is non-degenerate.
\end{itemize}
\end{lemma}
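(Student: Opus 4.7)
The plan is to proceed in stages. Bilinearity of $(-,-)_{\Gamma}$ is immediate since it is the restriction of a bilinear form. For (1), note that $J''$ is visibly a $K$-subspace of $\Lambda$, so the only content is the inclusion $J'' \subseteq \Gamma$: if $\lambda \in J''$ then $(\Gamma, \lambda)_{\Lambda} = 0$, and since $J \subseteq \Gamma$ we also have $(J, \lambda)_{\Lambda} = 0$, so $\lambda \in J' \subseteq J \subseteq \Gamma$ by the standing hypothesis $J' \subseteq J$.

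The heart of the lemma is (2). Observe first that conditions (b) and (c) are literally the same by the definition of $J''$, so the real work is the equivalence (a)$\iff$(b). The key is to rewrite the defining identity $(\mathrm{v}(a), b)_{\Lambda} = (b, a)_{\Lambda}$ under the substitution $a \mapsto \mathrm{v}^{-1}(a')$, yielding $(a', b)_{\Lambda} = (b, \mathrm{v}^{-1}(a'))_{\Lambda}$. Under $\mathrm{v}$-stability of $\Gamma$, this pair of identities lets one swap the two arguments of $(-,-)_{\Gamma}$ at the cost of an automorphism that preserves $\Gamma$. Concretely, for (a)$\Rightarrow$(b) I would take $\gamma' \in \Gamma$ and write $(\gamma', \gamma)_{\Lambda} = (\gamma, \mathrm{v}^{-1}(\gamma'))_{\Lambda}$; since $\mathrm{v}^{-1}(\gamma') \in \Gamma$, the right-hand side vanishes by (a). The reverse direction uses $(\gamma, \gamma')_{\Lambda} = (\mathrm{v}(\gamma'), \gamma)_{\Lambda}$ with $\mathrm{v}(\gamma') \in \Gamma$ and vanishes by (b).

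For the $\mathrm{v}$-stability of $J''$, I would take $\gamma \in J''$ and $\gamma' \in \Gamma$, and apply the original identity with $a = \gamma$, $b = \gamma'$ to obtain $(\mathrm{v}(\gamma), \gamma')_{\Lambda} = (\gamma', \gamma)_{\Lambda} = 0$. Thus $\mathrm{v}(\gamma)$ satisfies the analogue of (a), and the equivalence (a)$\iff$(b) applied to $\mathrm{v}(\gamma) \in \Gamma$ (which lies in $\Gamma$ by $\mathrm{v}$-stability of $\Gamma$) upgrades this to $\mathrm{v}(\gamma) \in J''$. For non-degeneracy of $(-,-)_{\Gamma/J''}$, the equivalence (a)$\iff$(b)$\iff$(c) says that $J''$ is the simultaneous left and right radical of $(-,-)_{\Gamma}$, so the quotient form is well-defined; and if $\bar\gamma$ pairs trivially with all of $\Gamma/J''$, any lift $\gamma$ satisfies $(\Gamma, \gamma)_{\Lambda} = 0$, hence $\gamma \in J''$ by (b), forcing $\bar\gamma = 0$.

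The main obstacle is purely bookkeeping: the Nakayama automorphism sits on only one side of the defining identity, so each attempt to interchange arguments inside $(-,-)_{\Gamma}$ must be accompanied by a carefully chosen substitution of the $\Gamma$-argument by $\mathrm{v}^{\pm 1}$ of it, and it is precisely here that the $\mathrm{v}$-stability hypothesis on $\Gamma$ is consumed.
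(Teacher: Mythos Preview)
Your proposal is correct and follows essentially the same approach as the paper. The only organisational difference is that the paper first proves $\mathrm{v}$-stability of $J''$ directly (using the identity $(\Gamma,\mathrm{v}^{\pm}(j))=(\mathrm{v}^{\mp}(\Gamma),j)$) and then invokes it in the step (a)$\Rightarrow$(b), whereas you establish (a)$\Leftrightarrow$(b) directly and afterwards deduce $\mathrm{v}$-stability of $J''$ from that equivalence; both orderings work. One small point: your argument for $\mathrm{v}$-stability only shows $\mathrm{v}(J'')\subseteq J''$, but since $\Lambda$ is finite-dimensional and $\mathrm{v}$ is an automorphism this already forces $\mathrm{v}(J'')=J''$.
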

\begin{proof}
Since the former assertion clearly holds, we show the latter assertion.
In the following, we assume $J'\subseteq J$.

(1) By the assumption, we have $J''\subseteq J'\subseteq J\subseteq \Gamma$ as $K$-vector spaces.

(2) Assume that $\Gamma$ is $\mathrm{v}$-stable.
First, we show that $J''$ is $\mathrm{v}$-stable.
Let $j\in J''$.
Then $(\Gamma,j)=0$.
Since $\Gamma$ is $\mathrm{v}$-stable, we have $(\Gamma, \mathrm{v}^{\pm}(j))=(\mathrm{v}^{\mp}(\Gamma),j)=(\Gamma,j)=0$.
Hence $\mathrm{v}^{\pm}(j)\in J''$.
This implies that $J''$ is $\mathrm{v}$-stable.
Next, we prove that the conditions (a), (b) and (c) are equivalent to each other.

(a)$\Rightarrow$(b): By (a), we have $(\Gamma,\mathrm{v}^{-}(\gamma))=(\gamma,\Gamma)=0$.
Thus $\mathrm{v}^{-}(\gamma)\in J''$.
Since $J''$ is $\mathrm{v}$-stable, we obtain $\gamma\in J''$.

(b)$\Rightarrow$(c): This follows from the definition of $J''$.

(c)$\Rightarrow$(a): Since $\Gamma$ is $\mathrm{v}$-stable, we obtain $(\gamma,\Gamma)_{\Gamma}=(\gamma,\Gamma)=(\mathrm{v}(\Gamma),\gamma)=0$.
\end{proof}

By the lemma above, we can construct a self-injective algebra as follows.

\begin{proposition}\label{prop:const-frob}
Keep the notation in Lemma \ref{lem:vect-bilinear}.
Assume that $\Gamma$ is $\mathrm{v}$-stable and $J'\subseteq J$.
Then the following statements hold.
\begin{itemize}
\item[(1)] $J''$ is a two-sided ideal of $\Gamma$. 
\item[(2)] $\Gamma/J''$ is a Frobenius algebra, and hence a self-injective algebra.
\end{itemize}
\end{proposition}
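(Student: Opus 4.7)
The plan is to deduce both statements directly from Lemma \ref{lem:vect-bilinear}, which has already set up essentially all the needed machinery; nothing delicate remains.

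For (1), I would use the two-sided annihilator characterization of $J''$ established in Lemma \ref{lem:vect-bilinear}(2): under the hypothesis $J'\subseteq J$, an element $j\in\Gamma$ lies in $J''$ if and only if $(\Gamma,j)_{\Lambda}=0$, equivalently $(j,\Gamma)_{\Lambda}=0$. Pick $\gamma\in\Gamma$ and $j\in J''$. Since $\Gamma$ is a subalgebra, $\Gamma\gamma\subseteq\Gamma$ and $\gamma\Gamma\subseteq\Gamma$, so by associativity of the bilinear form
\[
(\Gamma,\gamma j)_{\Lambda}=(\Gamma\gamma,j)_{\Lambda}\subseteq(\Gamma,j)_{\Lambda}=0,\qquad
(j\gamma,\Gamma)_{\Lambda}=(j,\gamma\Gamma)_{\Lambda}\subseteq(j,\Gamma)_{\Lambda}=0.
\]
Applying the equivalences (a)$\Leftrightarrow$(b)$\Leftrightarrow$(c) of Lemma \ref{lem:vect-bilinear}(2) yields $\gamma j\in J''$ and $j\gamma\in J''$. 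Hence $J''$ is a two-sided ideal of $\Gamma$.

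For (2), Lemma \ref{lem:vect-bilinear}(2) already guarantees that the bilinear form $(-,-)_{\Lambda}$ descends to a non-degenerate $K$-bilinear form $(-,-)_{\Gamma/J''}:\Gamma/J''\times\Gamma/J''\to K$. Associativity of this quotient form is immediate from associativity of $(-,-)_{\Lambda}$ together with the fact, just proved in (1), that $J''$ is a two-sided ideal of $\Gamma$ (so that multiplication in $\Gamma/J''$ is well-defined and compatible with the form). A finite-dimensional $K$-algebra admitting a non-degenerate associative $K$-bilinear form is by definition Frobenius, and Frobenius algebras are self-injective. This gives (2).

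The only remotely subtle point is making sure that the argument in (1) genuinely uses both directions of the biannihilator equivalence in Lemma \ref{lem:vect-bilinear}(2) — one direction to detect $\gamma j$ as a \emph{left} annihilator of $\Gamma$, the other to detect $j\gamma$ as a \emph{right} annihilator of $\Gamma$. That symmetric characterization is exactly what the $\mathrm{v}$-stability hypothesis on $\Gamma$ buys us, and everything else is formal; I do not anticipate any real obstacle.
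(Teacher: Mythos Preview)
Your proof is correct and follows essentially the same approach as the paper. The only cosmetic difference is that for the right-ideal condition $j\gamma\in J''$ the paper redoes the Nakayama-automorphism computation $(-,j\gamma')=(\mathrm{v}(\gamma')(-),j)$ directly, whereas you invoke the already-proved equivalence (a)$\Leftrightarrow$(b)$\Leftrightarrow$(c) of Lemma~\ref{lem:vect-bilinear}(2); both routes ultimately rest on the $\mathrm{v}$-stability of $\Gamma$ and are equivalent in substance.
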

\begin{proof}
(1) It is enough to show $\gamma j\gamma' \in J''$ for each $j\in J''$ and $\gamma,\gamma'\in \Gamma$.
By the associativity, we obtain $(-,\gamma j)=((-)\gamma,j)$ and 
\begin{align}
(-,j\gamma')=((-)j,\gamma')=(\mathrm{v}(\gamma'),(-)j)=(\mathrm{v}(\gamma')(-),j).\notag
\end{align}
By $j\in J''$, we have $(-,\gamma j)_{\Gamma}=0$ and $(-,j\gamma')_{\Gamma}=0$, where the second equation follows from the fact that $\Gamma$ is $\mathrm{v}$-stable.
Hence $\gamma j, j\gamma'\in J''$.

(2) By Lemma \ref{lem:vect-bilinear}(2), $(-,-)_{\Gamma/J''}$ is a non-degenerate associative $K$-bilinear form.
Hence $\Gamma/J''$ is a Frobenius algebra by \cite{Nak39,Nak41} (see also \cite[Theorem IV.2.1]{SY11}).
\end{proof}

In the following, by using Proposition \ref{prop:const-frob}, we prove Theorem \ref{thm:local-tilting}(1).
For a basic connected non-semisimple self-injective $K$-algebra $A=KQ/I$, let $\Lambda:=A\times A$ and $J:=\rad \Lambda=\rad A\times \rad A$.
Then $\Lambda$ is also a self-injective algebra with $((a_{1},b_{1}),(a_{2},b_{2}))_{\Lambda}=(a_{1},b_{1})_{A}+(a_{2},b_{2})_{A}$ for all $(a_{1},b_{1}),(a_{2},b_{2})\in \Lambda$ and $\mathrm{v}_{\Lambda}=\mathrm{v}_{A}\times \mathrm{v}_{A}$ (for example see \cite[Chapter IV]{SY11}).
Define a subset $\Gamma$ by
\begin{align}
\Gamma:=\{ (a,a')\in \Lambda\mid a-a'\in \rad A \}.\notag
\end{align}
Then $\Gamma$ is a subalgebra of $\Lambda$ with $1_{\Gamma}=1_{\Lambda}$ and $J\subset \Gamma$.
Let $J':=\{ (a,a')\in\Lambda\mid (J,(a,a'))_{\Lambda}=0\}$ and $J'':=\{ (a,a')\in\Lambda\mid (\Gamma,(a,a'))_{\Lambda}=0\}$.
The following lemma induces that $\Gamma$ and $J'$ satisfy the assumption in Proposition \ref{prop:const-frob}.

\begin{lemma}\label{lem:key-thm32-1}
Under the notation above, the following statements hold.
\begin{itemize}
\item[(1)] $\Gamma$ is $\mathrm{v}_{\Lambda}$-stable.
\item[(2)] $J'=\soc \Lambda=\soc A\times \soc A$ and $J''=\{ (s,-s)\mid s\in \soc A \}$. 
\item[(3)] $\Gamma/J''$ is a self-injective algebra.
\item[(4)] $A$ is a $\nu_{A}$-cyclic object in $\Kb(\proj A)$ if and only if $\Gamma/J''$ is a $\nu_{\Gamma/J''}$-cyclic object in $\Kb(\proj (\Gamma/J''))$.
\end{itemize}
\end{lemma}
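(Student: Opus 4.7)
The plan is to establish (1)--(4) in order. Parts (1) and (2) are essentially formal consequences of the definitions and of the non-degeneracy of $(-,-)_{\Lambda}$; (3) is a direct application of Proposition \ref{prop:const-frob}; (4) is a combinatorial matching between the Nakayama permutations of $A$ and of $\Gamma/J''$, and will require the most care.

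For (1), since $\mathrm{v}_\Lambda = \mathrm{v}_A \times \mathrm{v}_A$ and any $K$-algebra automorphism preserves $\rad A$, the condition $a - a' \in \rad A$ is stable under $\mathrm{v}_\Lambda$, so $\mathrm{v}_\Lambda(\Gamma) = \Gamma$. For (2), I will compute $J'$ and $J''$ using the product structure of the form. Since $(-,-)_\Lambda$ decomposes componentwise, $J' = (\rad A)^{\perp_A} \times (\rad A)^{\perp_A}$, and the standard Frobenius duality gives $(\rad A)^{\perp_A} = \soc A$, so $J' = \soc A \times \soc A$. For $J''$ I will write $\Gamma = \Delta(A) + J$ with $\Delta(A) := \{(a,a) \mid a \in A\}$; then $(a,a') \in J''$ must satisfy $(a_0, a+a')_A = 0$ for all $a_0 \in A$, forcing $a' = -a$ by non-degeneracy, and the remaining condition $(J,(a,-a))_\Lambda = 0$ reduces to $(\rad A, a)_A = 0$, i.e.\ $a \in \soc A$.

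For (3), I will invoke Proposition \ref{prop:const-frob} after verifying $J' \subseteq J$: since $A$ is basic, connected, non-semisimple and self-injective, every indecomposable projective $Ae_i$ is local and nonsimple, so $\soc(Ae_i) \subseteq \rad(Ae_i)$, and summing over $i$ yields $\soc A \subseteq \rad A$, whence $J' = \soc A \times \soc A \subseteq J$. Combined with (1) and Lemma \ref{lem:vect-bilinear}, the hypotheses of Proposition \ref{prop:const-frob} are satisfied, so $\Gamma/J''$ is Frobenius, in particular self-injective.

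For (4), the strategy is to identify the Nakayama permutations of $A$ and of $\Gamma/J''$. Let $e_1,\ldots,e_n$ be primitive orthogonal idempotents of $A$. For $i \neq j$, the element $(e_i, e_j)$ is not in $\Gamma$, because $e_i$ and $e_j$ have distinct nonzero images in $A/\rad A$; hence the primitive orthogonal idempotents of $\Gamma$ are precisely the $(e_i, e_i)$, primitivity following from $(e_i,e_i)\Gamma(e_i,e_i)/\rad \cong K$. Since $J'' \subseteq \rad \Gamma$ (using $\soc A \subseteq \rad A$ from step (3)), the residues $\overline{(e_i, e_i)}$ form a complete set of primitive orthogonal idempotents of $\Gamma/J''$. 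The induced Nakayama automorphism $\mathrm{v}_{\Gamma/J''}$ comes from the restriction of $\mathrm{v}_A \times \mathrm{v}_A$ and sends $\overline{(e_i, e_i)}$ to $\overline{(\mathrm{v}_A(e_i), \mathrm{v}_A(e_i))}$, which is conjugate in $\Gamma/J''$ to $\overline{(e_{\pi_A(i)}, e_{\pi_A(i)})}$, where $\pi_A$ denotes the Nakayama permutation of $A$. Hence under the bijection $i \leftrightarrow \overline{(e_i,e_i)}$ the Nakayama permutations agree, so $\nu_A$ acts transitively on the indecomposable summands of $A$ if and only if $\nu_{\Gamma/J''}$ acts transitively on those of $\Gamma/J''$, proving the equivalence.

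The main obstacle is expected to be (4): the computations in (1)--(3) follow cleanly from non-degeneracy and the duality $(\rad A)^{\perp_A} = \soc A$, but in (4) one must carefully pin down the primitive idempotent structure of the non-product algebra $\Gamma/J''$ and verify that the Nakayama automorphism descends through the quotient in a way that matches $\pi_A$ vertex by vertex.
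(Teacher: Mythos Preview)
Your proposal is correct and follows essentially the same approach as the paper: (1) via $\mathrm{v}_A$ preserving $\rad A$, (2) via non-degeneracy and the identification $(\rad A)^{\perp_A}=\soc A$, (3) via Proposition~\ref{prop:const-frob} after checking $\soc A\subseteq\rad A$, and (4) by matching Nakayama permutations through the idempotents $\overline{(e_i,e_i)}$. The only cosmetic differences are that in (2) you organize the computation of $J''$ via the decomposition $\Gamma=\Delta(A)+J$ rather than first restricting to $J'$, and in (4) you spell out the primitive-idempotent structure of $\Gamma/J''$ explicitly, whereas the paper simply observes that $\mathrm{v}_{\Gamma/J''}(\overline{(e_i,e_i)})-\overline{(e_{v_A(i)},e_{v_A(i)})}\in\rad(\Gamma/J'')$ and invokes the algebraically-closed-field criterion $eA\cong fA\Leftrightarrow e-f\in\rad A$.
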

\begin{proof}
(1) This follows from the fact that $a-a'\in\rad A$ if and only if $\mathrm{v}_{A}(a)-\mathrm{v}_{A}(a')\in\rad A$.

(2) First we show $J'=\soc \Lambda$. 
Let $\lambda\in \Lambda$.
By the associativity of $(-,-)_{\Lambda}$, $\lambda\in J'$ if and only if $(-,r\lambda)_{\Lambda}=0$ for all $r\in\rad\Lambda$.
Since $(-,-)_{\Lambda}$ is non-degenerate, we have a result that $(-,r\lambda)_{\Lambda}=0$ if and only if $r\lambda=0$.
By $\soc \Lambda_{\Lambda}=\soc_{\Lambda}\Lambda$, $\lambda\in J'$ if and only if $\lambda\in \soc \Lambda$.

Next, we show $J''=\{ (s,-s)\mid s\in\soc A\}$.
Let $(s,s')\in J'$.
Then $(s,s')\in J''$ if and only if $((a,a'),(s,s'))_{\Lambda}=0$ for each $(a,a')\in \Gamma$.
By $a-a'\in\rad A$, there exists $r\in \rad A$ such that $a'=a+r$.
Thus we obtain 
\begin{align}
0=((a,a'),(s,s'))_{\Lambda}=(a,s)_{A}+(a',s')_{A}=(a,s+s')_{A}+(r,s')_{A}=(a,s+s')_{A}+(1_{A},rs')_{A}\notag
\end{align}
By $\soc A_{A}=\soc {}_{A}A$, we have $rs'=0$ and hence $(a,s+s')_{A}=0$.  
Thus $(s,s')\in J''$ if and only if $(a,s+s')_{A}=0$ for each $a\in A$.
Since $(-,-)_{A}$ is non-degenerate, $(-,s+s')_{A}=0$ if and only if $s+s'=0$.
Hence $J''=\{ (s,-s)\mid s\in \soc A \}$

(3) By (1), $\Gamma$ is $\mathrm{v}_{\Lambda}$-stable.
Since $A$ is a connected non-semisimple self-injective algebra, we have $\soc A \subset \rad A$.
Hence $J'\subset J$ by (2). Thus the assertion follows from Proposition \ref{prop:const-frob}(2).

(4) Since $K$ is an algebraically closed field, we obtain a result that, for primitive idempotents $e,f\in A$, $eA\cong fA$ if and only if $e-f\in \rad A$.

Let $e_{i}$ be the primitive idempotent of $A$ corresponding to a vertex $i\in Q_{0}$.
Then we have $\nu_{A}(e_{i}A)\cong e_{v_{A}(i)}A$, where $v_{A}:Q_{0}\to Q_{0}$ is a Nakayama permutation of $A$.
On the other hand, we obtain $\nu_{A}(e_{i}A)\cong \mathrm{v}_{A}(e_{i})A$ (see \cite[Corollary IV.3.14]{SY11}).
Hence $\mathrm{v}_{A}(e_{i})A\cong e_{v_{A}(i)}A$.
Thus there exists $r\in \rad A$ such that $v_{A}(e_{i})=e_{v_{A}(i)}+r$.
Since
\begin{align}
\mathrm{v}_{\Gamma/J''}((e_{i},e_{i})+J'')
&=(\mathrm{v}_{A}(e_{i}),\mathrm{v}_{A}(e_{i}))+J''=(e_{v_{A}(i)}+r,e_{v_{A}(i)}+r)+J''\notag\\
&=((e_{v_{A}(i)},e_{v_{A}(i)})+J'')+((r,r)+J''),\notag
\end{align}
we have $\mathrm{v}_{\Gamma/J''}((e_{i},e_{i})+J'')-((e_{v_{A}(i)},e_{v_{A}(i)})+J'')\in \rad (\Gamma/J'')$.
This implies $A$ is $\nu_{A}$-cyclic if and only if $\Gamma/J''$ is $\nu_{\Gamma/J''}$-cyclic.
\end{proof}

Comparing $\widetilde{A}$ and $\Gamma/J''$, we complete the proof of Theorem \ref{thm:local-tilting}.

\begin{proposition}\label{prop:key-thm32}
We have an algebra isomorphism $\varphi:\widetilde{A}\to\Gamma/J''$.
\end{proposition}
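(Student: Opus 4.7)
The plan is to define $\varphi$ on the generators of the path algebra $K\widetilde{Q}$, verify it annihilates $\widetilde{I}$, and conclude bijectivity via surjectivity together with a dimension count involving $\dim_K \soc A$.

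First, I would set
\[
\varphi(e_i) := (e_i,e_i) + J'', \qquad \varphi(a^+) := (a,0) + J'', \qquad \varphi(a^-) := (0,a) + J''
\]
for $i \in Q_0$ and $a \in Q_1$. Since $a \in \rad A$, the elements $(a,0)$ and $(0,a)$ lie in $\Gamma$, and source/target idempotents match, so by the universal property of the path algebra $\varphi$ extends uniquely to a $K$-algebra map $K\widetilde{Q} \to \Gamma/J''$. Next I would check that $\varphi(\widetilde{I}) = 0$: for any $r \in I$, $\varphi(r^{\pm})$ vanishes because $r = 0$ in $A$, killing $I^{\pm}$; the products $\varphi(a^+ b^-) = (a,0)(0,b) + J''$ and $\varphi(a^- b^+) = (0,a)(b,0) + J''$ are both zero, killing $I^d$; and the maximality of $p_i$ forces $p_i \cdot \rad A = 0$, so $p_i \in \soc A$, whence Lemma \ref{lem:key-thm32-1}(2) gives $(p_i,-p_i) \in J''$, killing $I^c$. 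Thus $\varphi$ descends to an algebra map $\widetilde{A} \to \Gamma/J''$. For surjectivity, the image contains $(e_i,e_i)+J''$ and, by multiplying out arrow images, contains $(x,0)+J''$ and $(0,x)+J''$ for every $x \in \rad A$; writing any $(a,a+r) \in \Gamma$ with $a = \sum c_i e_i + s$ as $\sum c_i(e_i,e_i) + (s,0) + (0,s+r)$ shows the image exhausts $\Gamma/J''$.

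The main obstacle is injectivity, which I would handle by comparing dimensions. On the target side, the basis $\{(e_i,e_i)\} \cup \{(x,0),(0,x) : x \in B\}$ of $\Gamma$, with $B$ a basis of $\rad A$, yields $\dim_K \Gamma = \dim_K A + \dim_K \rad A$, and Lemma \ref{lem:key-thm32-1}(2) gives $\dim_K J'' = \dim_K \soc A = |Q_0|$. On the source side, modulo $I^+ + I^- + I^d$ the algebra $K\widetilde{Q}$ becomes the amalgamated sum $A_+ \oplus_{KQ_0} A_-$ of two copies of $A$ glued along the semisimple part $KQ_0$, of dimension $2\dim_K A - |Q_0|$. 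The delicate step is to show that $\{p_i^+ - p_i^-\}_{i \in Q_0}$ remains $K$-linearly independent in this quotient: basic self-injectivity gives $\dim_K \soc(e_i A) = 1$ with generator $p_i$, so $\{p_i\}$ is a $K$-basis of $\soc A$, and since $A$ is non-semisimple $\soc A \subseteq \rad A$, so each $p_i^+$ lies in $\rad A_+$ and hence outside $A_+ \cap A_- = KQ_0$. Consequently any relation $\sum c_i(p_i^+ - p_i^-) = 0$ forces $\sum c_i p_i^+ \in KQ_0 \cap \rad A_+ = 0$, so all $c_i = 0$. Therefore $\dim_K \widetilde{A} = \dim_K A + \dim_K \rad A - |Q_0| = \dim_K(\Gamma/J'')$, which combined with surjectivity makes $\varphi$ an isomorphism.
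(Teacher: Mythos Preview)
Your proof is correct and follows the same overall strategy as the paper: define $\varphi$ on generators, check it kills $\widetilde{I}$, verify surjectivity, and finish with a dimension argument. The definition of $\varphi$, the verification that $I^{\pm}$, $I^{d}$, $I^{c}$ are annihilated, and the surjectivity check are essentially identical to the paper's.

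The only genuine difference is in how the dimension equality is obtained. You compute $\dim_{K}\widetilde{A}$ directly by first identifying $K\widetilde{Q}/(I^{+}+I^{-}+I^{d})$ with the amalgam $A_{+}\oplus_{KQ_{0}}A_{-}$ and then arguing that the family $\{p_{i}^{+}-p_{i}^{-}\}$ stays linearly independent there; this is correct, and your use of $\soc A\subseteq\rad A$ (valid because $A$ is connected non-semisimple self-injective) together with $\rad A_{+}\cap\rad A_{-}=0$ in the amalgam handles the ``delicate step'' cleanly. The paper instead sidesteps this computation entirely: it writes down an explicit $K$-linear map $\varphi':\Gamma\to K\widetilde{Q}$ sending $(e_{i},e_{i})\mapsto e_{i}$, $(r,0)\mapsto r^{+}$, $(0,r)\mapsto r^{-}$, observes that $\varphi'(J'')\subseteq\widetilde{I}$, and concludes that the induced map $\Gamma/J''\to\widetilde{A}$ is surjective, giving $\dim_{K}\widetilde{A}\le\dim_{K}(\Gamma/J'')$ without ever computing either dimension. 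The paper's route is shorter and avoids analysing the intermediate quotient, while yours has the advantage of producing the actual dimension $2\dim_{K}A-2|Q_{0}|$ explicitly.
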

\begin{proof}
First, we construct an algebra homomorphism $\psi:K\widetilde{Q}\to \Gamma$ which is surjective.
Decompose the identity $1_{A}$ in $A$ as $1_{A}=\sum_{i\in Q_{0}}e_{i}$, where $e_{i}\in A$ is the primitive idempotent corresponding to a vertex $i\in Q_{0}$.
Let $\psi_{0}:\widetilde{Q}_{0}\to \Gamma$ be the map defined by $\psi_{0}(i):=(e_{i},e_{i})$ for $i\in \widetilde{Q}_{0}$, and $\psi_{1}:\widetilde{Q}_{1}\to \Gamma$ the map defined by $\psi_{1}(\alpha^{+}):=(\alpha,0)$ and $\psi_{1}(\alpha^{-}):=(0,\alpha)$ for $\alpha\in Q_{1}$.
Then we can easily check that
\begin{itemize}
\item[(i)] $1_{\Gamma}=\sum_{i\in\widetilde{Q}_{0}}\psi_{0}(i)$ and $\psi_{0}(i)\psi_{0}(j)=\begin{cases}\psi_{0}(i)&(i=j)\\0&(i\neq j)\end{cases}$,
\item[(ii)] for each $\alpha:i\to j$ in $Q_{1}$, $\psi_{1}(\alpha^{\pm})=\psi_{0}(i)\psi_{1}(\alpha^{\pm})\psi_{0}(j)$.
\end{itemize}
By \cite[Theorem II.1.8]{ASS06}, there exists an algebra homomorphism $\psi:K\widetilde{Q}\to \Gamma$ that extends $\psi_{0}$ and $\psi_{1}$. Note that $\psi$ is surjective.
Composing $\psi$ with the natural surjection $\Gamma\to \Gamma/J''$, we obtain a surjective map $\varphi:K\widetilde{Q}\to \Gamma/J''$.

Next, we show $\varphi(\widetilde{I})=0$.
Since $\psi(I^{\pm})=0$ and $\psi(I^{d})=0$, it is enough to claim that $\psi(p_{i}^{+}-p_{i}^{-})\in J''$ or equivalently $\varphi(p_{i}^{+}-p_{i}^{-})=0$, where $p_{i}\notin I$ is a path in $KQ$ with maximal length starting from $i\in Q_{0}$.
Note that $p_{i}\in \soc A$.
By Lemma \ref{lem:key-thm32-1}(2), we have 
\begin{align}
\psi(p_{i}^{+}-p_{i}^{-})=\psi(p_{i}^{+})-\psi(p_{i}^{-})=(p_{i},-p_{i})\in J''.\notag
\end{align}
This implies $\varphi(p_{i}^{+}-p_{i}^{-})=0$ and hence $\varphi(\widetilde{I})=0$.
Thus we obtain a surjective map $\varphi:\widetilde{A}\to \Gamma/J''$.

Finally, we check $\dim_{K}\widetilde{A}=\dim_{K}(\Gamma/J'')$.
Since $\varphi: \widetilde{A}\to \Gamma/J''$ is surjective, we have only to show $\dim_{K}\widetilde{A}\leq \dim_{K}(\Gamma/J'')$.
Define a $K$-linear map $\varphi':\Gamma\to K\widetilde{Q}$ by $\varphi'(e_{i},e_{i})=e_{i}$ for $e_{i}\in A$ and $\varphi'(r,0)=r^{+}$ and $\varphi'(0,r)=r^{-}$ for $r\in \rad A$.
Since $\varphi'(s,-s)=s^{+}-s^{-}\in I^{c}$ for each $s\in \soc A$, we have $\varphi'(J'')\subset \widetilde{I}$.
Thus we obtain a $K$-linear map $\varphi':\Gamma/J'' \to \widetilde{A}$ which is surjective.
This finishes the proof.
\end{proof}

Now we are ready to prove Theorem \ref{thm:local-tilting}(1) and (3).

\begin{proof}[Proof of Theorem \ref{thm:local-tilting}]
By Proposition \ref{prop:key-thm32}, $\widetilde{A}$ is isomorphic to $\Gamma/J''$.
Hence the statements (1) and (3) follow from Lemma \ref{lem:key-thm32-1}. 
\end{proof}

\section{The second example: non-trivial tilting-discrete case}
In this section, we give other examples of tilting-discrete algebras which are not silting-discrete.
For integers $i\leq j$, let $[i,j]:=\{ i,i+1,\ldots,j-1,j \}$.
Let $n,m$ be positive integers.
Define a quiver $\mathbb{T}_{n,m}:=(\mathbb{T}_{0},\mathbb{T}_{1})$, where $\mathbb{T}_{0}$ is the vertex set and $\mathbb{T}_{1}$ is the arrow set, as follows:
\begin{itemize}
\item $\mathbb{T}_{0}:=\{ (i,r)\mid i\in [1,n],\ r\in \mathbb{Z}/m\mathbb{Z} \}$,
\item $\mathbb{T}_{1}:=\{ a_{i,r}: (i,r)\to (i+1,r)\mid i\in [1,n-1],\ r\in \mathbb{Z}/m\mathbb{Z} \}$\\[3pt]
\hspace{15mm}$\coprod\{ b_{i,r}: (i,r)\to (i-1,r+1) \mid i\in [2,n],\ r\in \mathbb{Z}/m\mathbb{Z} \}$.
\end{itemize}
For example, $\mathbb{T}_{5,5}$ is given by the following quiver:
\begin{align}
\xymatrix@C=2mm@R=4mm{
(5,3)\ar[rd]^-{b_{5,3}}&&(5,4)\ar[rd]^-{b_{5,4}}&&(5,0)\ar[rd]^-{b_{5,0}}&&(5,1)\ar[rd]^-{b_{5,1}}&&(5,2)\ar[rd]^-{b_{5,2}}&&(5,3)\\
&(4,4)\ar[ru]^-{a_{4,4}}\ar[rd]^-{b_{4,4}}&&(4,0)\ar[ru]^-{a_{4,0}}\ar[rd]^-{b_{4,0}}&&(4,1)\ar[ru]^-{a_{4,1}}\ar[rd]^-{b_{4,1}}&&(4,2)\ar[ru]^-{a_{4,2}}\ar[rd]^-{b_{4,2}}&&(4,3)\ar[ru]^-{a_{4,3}}\ar[rd]^-{b_{4,3}}\\
(3,4)\ar[ru]^-{a_{3,4}}\ar[rd]^-{b_{3,4}}&&(3,0)\ar[ru]^-{a_{3,0}}\ar[rd]^-{b_{3,0}}&&(3,1)\ar[ru]^-{a_{3,1}}\ar[rd]^-{b_{3,1}}&&(3,2)\ar[ru]^-{a_{3,2}}\ar[rd]^-{b_{3,2}}&&(3,3)\ar[ru]^-{a_{3,3}}\ar[rd]^-{b_{3,3}}&&(3,4)\\
&(2,0)\ar[ru]^-{a_{2,0}}\ar[rd]^-{b_{2,0}}&&(2,1)\ar[ru]^-{a_{2,1}}\ar[rd]^-{b_{2,1}}&&(2,2)\ar[ru]^-{a_{2,2}}\ar[rd]^-{b_{2,2}}&&(2,3)\ar[ru]^-{a_{2,3}}\ar[rd]^-{b_{2,3}}&&(2,4)\ar[ru]^-{a_{2,4}}\ar[rd]^-{b_{2,4}}\\
(1,0)\ar[ru]^-{a_{1,0}}&&(1,1)\ar[ru]^-{a_{1,1}}&&(1,2)\ar[ru]^-{a_{1,2}}&&(1,3)\ar[ru]^-{a_{1,3}}&&(1,4)\ar[ru]^-{a_{1,4}}&&(1,0)
}\notag
\end{align}

We define a self-injective algebra $A_{n,m}$, which plays a crucial role in this section.
Let $K$ be an algebraically closed field.
Formally, put $a_{0,r}=a_{n,r}=b_{1,r}=b_{n+1,r}=0$ for all $r\in \mathbb{Z}/m\mathbb{Z}$.
Then $A_{n,m}$ is a bound quiver algebra $K\mathbb{T}_{n,m}/I$, where $I$ is the two-sided ideal generated by $a_{i,r}b_{i+1,r}-b_{i,r}a_{i-1,r+1}$ for all $i\in[1,n]$ and $r\in \mathbb{Z}/m\mathbb{Z}$.
By definition, $A_{n,1}$ is isomorphic to the preprojective algebra of the Dynkin diagram $\mathbf{A}_{n}$, and in general, $A_{n,m}$ is isomorphic to the stable Auslander algebra of a self-injective Nakayama algebra with $m$ simple modules (up to isomorphism) and Loewy length $n$.
Hence $A_{n,m}$ is a self-injective algebra (see \cite{AR73, Bu98}).

Under certain conditions, the algebra $A_{n,m}$ has the desired property.
Namely, the following theorem is our main result.

\begin{theorem}\label{thm:counterexample-question}
Let $n,m\ge 5$ be integers with $\gcd(n-1,m)=1$. 
Assume that $n$ is odd and $m$ is not divisible by the characteristic of $K$.
Then $A_{n,m}$ is a tilting-discrete algebra but not silting-discrete.
\end{theorem}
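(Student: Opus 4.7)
The plan is to establish the two assertions separately, as they involve essentially different ingredients.

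First, for the failure of silting-discreteness, I would invoke the paper's remark that $\tau$-tilting finiteness is preserved under taking factor algebras, via \cite[Corollary 1.9]{DIRRT}. It therefore suffices to exhibit a $\tau$-tilting infinite factor algebra of $A_{n,m}$; this forces $\ssilt{2_{A_{n,m}}}\Kb(\proj A_{n,m})$ to be infinite. Since $n,m\ge 5$, the quiver $\mathbb{T}_{n,m}$ is large enough to admit a factor of non-Dynkin type. Concretely, one can kill all but a well-chosen subset of arrows (e.g.\ restricting to the bottom two rows $\{(1,r),(2,r):r\in\mathbb{Z}/m\mathbb{Z}\}$ to obtain a cyclic quiver $\widetilde{\mathbf{A}}_{k}$ of affine Dynkin type, or further collapsing to produce a representation-infinite gentle quotient), yielding a $\tau$-tilting infinite factor and hence that $A_{n,m}$ is not silting-discrete.

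Second, for tilting-discreteness, I would apply Corollary \ref{cor:AM17-thm}(2): it suffices to show that for each basic tilting object $M$ reached by a finite sequence of irreducible $\nu$-stable mutations from $A_{n,m}$, the set $\ssilt{2_{M}}^{\nu}\Kb(\proj A_{n,m})$ is finite. The natural strategy is to exploit the close relation between $A_{n,m}$ and the preprojective algebra $A_{n,1}=\Pi(\mathbf{A}_{n})$. Under $\operatorname{char}K\nmid m$, the algebra $A_{n,m}$ should be realizable as a $\mathbb{Z}/m\mathbb{Z}$-Galois cover or skew-group algebra of $A_{n,1}$ (or of a closely related orbit algebra), and the hypotheses $n$ odd and $\gcd(n-1,m)=1$ should precisely control the interaction of this covering with the Nakayama functor. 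This would let $\nu$-stable tilting complexes on $A_{n,m}$ correspond to equivariant tilting complexes on $A_{n,1}$, thereby transferring the tilting-discreteness of Dynkin preprojective algebras (\cite[Theorem 1.3]{AM17}) to $A_{n,m}$.

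The main obstacle is the tilting-discrete direction. While the non-silting-discreteness is a structural consequence of the algebra's size (any sufficiently large mesh algebra has a $\tau$-tilting infinite quotient), establishing tilting-discreteness requires a genuine analysis of the covering correspondence: identifying $A_{n,m}$ as a skew-group algebra (or equivalent construction) of $A_{n,1}$, and then transferring the mutation structure of $\nu$-stable tilting complexes through this correspondence. The three numerical hypotheses $\gcd(n-1,m)=1$, $n$ odd, and $\operatorname{char}K\nmid m$ each exclude a distinct source of failure in this transfer (respectively: incompatibility of the Nakayama permutation with the $\mathbb{Z}/m\mathbb{Z}$-action, fixed points of the Nakayama action on the middle vertex, and non-semisimplicity of the group algebra governing equivariant descent). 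Verifying that these conditions suffice, and then appealing to the Dynkin-type result of \cite{AM17} via Corollary \ref{cor:AM17-thm}, is the technical crux of the argument.
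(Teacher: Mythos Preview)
Your overall architecture is right---split into non-silting-discrete and tilting-discrete, and invoke Corollary~\ref{cor:AM17-thm} for the latter---but there are problems with both halves when compared to the paper's argument.

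For non-silting-discreteness, the paper does not pass to a factor algebra; it passes to an idempotent truncation. Specifically, for $e=e_{4,r-1}+e_{2,r}+e_{3,r}+e_{4,r}+e_{2,r+1}$ one has $eA_{n,m}e$ isomorphic to the path algebra of type $\widetilde{\mathbf{D}}_4$, and then \cite{Ad16} together with \cite{DIJ19} give that $\ssilt{2}A_{n,m}$ is infinite. Your proposed quotient onto the bottom two rows does not yield the path algebra of $\widetilde{\mathbf{A}}_k$: the defining relations $a_{1,r}b_{2,r}=0$ survive (and $b_{2,r}a_{1,r+1}=0$ is forced by the image of the mesh relation at height $2$), so you land in a Nakayama algebra, which is $\tau$-tilting finite. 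The general idea of finding a $\tau$-tilting infinite reduction is correct, but the specific construction needs the idempotent-truncation route.

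The more serious gap is in the tilting-discrete direction. The paper does \emph{not} transfer tilting-discreteness wholesale from $A_{n,1}$ via a covering or equivariant correspondence. The skew group algebra identification $A_{n,m}\cong A_{n,1}\ast G_m$ (combined with \cite{HZ16}) is used only to establish the \emph{base case}: that $\ttilt{2}A_{n,m}$ is finite, via $\ttilt{2}A_{n,m}\subseteq\ssilt{2}^{\psi}A_{n,m}\cong\ssilt{2}^{g}A_{n,1}$, the last set being finite by \cite{Miz14}. This says nothing about $\ssilt{2_M}^{\nu}$ for mutation-reachable $M\neq A_{n,m}$. To close the argument via Corollary~\ref{cor:AM17-thm}, the paper proves a separate, substantial result (Proposition~\ref{prop:derived-class}): for every tilting object $T$ obtained by iterated irreducible $\nu$-stable mutation from $A_{n,m}$, one has $\End_{\mathcal{T}}(T)\cong A_{n,m}$. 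This is done by an explicit and lengthy computation of a single irreducible $\nu$-stable mutation $\mu_{X_\ell}(A_{n,m})$, constructing generators $x_{i,r},y_{i,r}$ of its endomorphism algebra, verifying mesh relations, determining the Gabriel quiver, and matching dimensions. The hypothesis $n$ odd enters precisely here, in controlling the shape of the $\nu$-orbits $\mathcal{O}_\ell$; it plays no role in the skew group descent. Your proposal of lifting the full tilting-discreteness of $A_{n,1}$ through an equivariant correspondence would require extending the \cite{HZ16}-type bijection from two-term silting objects to arbitrary tilting complexes, which is not available and not what the paper does.
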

Note that $A_{1,1}$ and $A_{2,1}$ are silting-discrete by direct calculation.

Since $A_{n,m}$ is a self-injective algebra, the Nakayama functor $\nu:=\kD\Hom_{A_{n,m}}(-,A_{n,m})$ is a Serre functor in $\Kb(\proj A_{n,m})$.
By Proposition \ref{prop:selfinjective-tilting}, all tilting objects are exactly $\nu$-stable silting objects.
In the following, let $\ssilt{2}A_{n,m}:=\ssilt{2_{A_{n,m}}}\Kb(\proj A_{n,m})$ and $\ttilt{2}{A_{n,m}}:=\ssilt{2_{A_{n,m}}}^{\nu}\Kb(\proj A_{n,m})$.
To show Theorem \ref{thm:counterexample-question}, we need the following two propositions.

\begin{proposition}\label{prop:card-silting-tilting}
Let $n,m$ be positive integers.
Then the following statements hold.
\begin{itemize}
\item[(1)] Assume $n,m\ge 5$. Then $\ssilt{2}A_{n,m}$ is not finite. In particular, $A_{n,m}$ is not silting-discrete.
\item[(2)] Assume that $\gcd(n-1,m)=1$ and $m$ is not divisible by the characteristic of $K$. Then $\ttilt{2}A_{n,m}$ is finite.
\end{itemize}
\end{proposition}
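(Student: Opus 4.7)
For part (1), the plan is to reduce to producing a $\tau$-tilting infinite factor algebra of $A_{n,m}$. By the remark just preceding this proposition, together with \cite[Theorem 3.2]{AIR14}, the set $\ssilt{2}A_{n,m}$ is in bijection with the set of support $\tau$-tilting $A_{n,m}$-modules; hence $\ssilt{2}A_{n,m}$ is infinite if and only if $A_{n,m}$ is $\tau$-tilting infinite. By \cite[Corollary 1.9]{DIRRT}, $\tau$-tilting finiteness passes to factor algebras, so it suffices to exhibit a $\tau$-tilting infinite factor of $A_{n,m}$. Since $n,m\ge 5$, the quiver $\mathbb{T}_{n,m}$ is large enough that killing a suitable collection of arrows (and setting the corresponding sum of vertex idempotents to zero) produces a connected hereditary quotient whose Gabriel quiver is acyclic of infinite representation type, e.g.\ of Kronecker or extended Dynkin type. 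Such hereditary algebras are classically $\tau$-tilting infinite, so $A_{n,m}$ is too, and in particular it is not silting-discrete.

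For part (2), the plan is to exploit rigidity under the Nakayama permutation. By Proposition \ref{prop:selfinjective-tilting}, since $A_{n,m}$ is self-injective, $\ttilt{2}A_{n,m}$ equals the set $\ssilt{2_{A_{n,m}}}^{\nu}\Kb(\proj A_{n,m})$ of basic $\nu$-stable 2-term silting objects. The first step is to compute the Nakayama permutation $v$ of $A_{n,m}$ on the vertex set $\mathbb{T}_{0}$ explicitly: from the rotational symmetry of $\mathbb{T}_{n,m}$ and the known Nakayama structure of stable Auslander algebras of self-injective Nakayama algebras, $v$ acts on $(i,r)$ by a rotation in $r$ whose shift is linear in $i$ modulo $m$ (possibly composed with a level-reflection $i\mapsto n+1-i$). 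Under $\gcd(n-1,m)=1$, the resulting cyclic action has orbits of maximal size $m$ on the non-fixed levels, so there are only finitely many $v$-orbits on $\mathbb{T}_{0}$, bounded linearly in $n$.

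Next, if $T=T^{-1}\oplus T^{0}$ is a basic $\nu$-stable 2-term silting object, then each of $T^{-1}$ and $T^{0}$ must itself be $\nu$-stable, hence is a direct sum of complete $v$-orbits of indecomposable projectives. This leaves only finitely many combinatorial choices for the underlying projectives of $T$. For each such choice, the candidate differentials $T^{-1}\to T^{0}$ making $T$ silting form a subset of a finite-dimensional Hom-space on which the finite cyclic group $\langle v\rangle$ acts, and we must count $\nu$-equivariant differentials up to isomorphism of silting objects. The hypothesis $\mathrm{char}(K)\nmid m$ makes this cyclic action semisimple (via a Maschke-style decomposition into isotypic components), which reduces the classification of $\nu$-equivariant silting differentials to a finite combinatorial problem on each isotypic piece. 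Summing the finitely many contributions yields $|\ttilt{2}A_{n,m}|<\infty$.

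The main obstacle is the second step of (2): one must pin down the Nakayama permutation on $\mathbb{T}_{0}$ precisely and then argue carefully that for each fixed pair of $\nu$-stable underlying projectives the $\nu$-equivariant silting differentials form a finite set. The two numerical hypotheses play distinct roles: $\gcd(n-1,m)=1$ controls the orbit structure of $v$ and forces the underlying-projective data to lie in a finite set, while $\mathrm{char}(K)\nmid m$ provides the semisimplicity needed to bound the equivariant differentials. Part (1), by contrast, is a direct $\tau$-tilting argument once a concrete $\tau$-tilting infinite quotient has been identified.
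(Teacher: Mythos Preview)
Your plan for part (1) is close in spirit to the paper but differs in an important detail. The paper does \emph{not} pass to a factor algebra; it passes to an idempotent subalgebra $eA_{n,m}e$ for a specific idempotent $e=e_{4,r-1}+e_{2,r}+e_{3,r}+e_{4,r}+e_{2,r+1}$, which turns out to be the hereditary path algebra of type $\widetilde{\mathbf{D}}_4$, and then invokes \cite[Corollary 2.9]{DIJ19} (reduction to $eAe$) rather than \cite[Corollary 1.9]{DIRRT} (reduction to quotients). Your quotient approach runs into trouble: the mesh relations of $A_{n,m}$ descend to any quotient $A_{n,m}/A_{n,m}eA_{n,m}$, so the natural candidates are not hereditary (for instance, on the five vertices above one gets a $\widetilde{\mathbf{D}}_4$-quiver with two zero-relations), and you have not exhibited a specific quotient that is actually $\tau$-tilting infinite. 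Also, $\mathbb{T}_{n,m}$ has no multiple arrows, so a Kronecker quotient cannot arise. This part is easily repaired by switching to the idempotent reduction as in the paper.

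For part (2) there is a genuine gap. Your key step, that a $\nu$-stable basic $2$-term silting object has only finitely many possible pairs $(T^{-1},T^{0})$, is not justified: although $T^{-1}$ and $T^{0}$ are indeed direct sums of $\nu$-orbits of indecomposable projectives, the \emph{multiplicities} of those orbits are not a priori bounded (and in a $\tau$-tilting infinite algebra the $g$-vectors, hence these multiplicities, are genuinely unbounded). The subsequent ``Maschke reduces the equivariant differentials to a finite combinatorial problem'' is too vague to yield finiteness of isomorphism classes; finite-dimensionality of a Hom-space does not by itself bound the number of silting complexes built from it. The paper takes an entirely different route: it shows that when $m$ is invertible in $K$ one has an algebra isomorphism $A_{n,m}\cong A_{n}\ast G_m$ (a skew group algebra over the preprojective algebra $A_n$ of type $\mathbf{A}_n$), and that under $\gcd(n-1,m)=1$ every $\nu$-stable object is $\psi$-stable for the rotation automorphism $\psi$. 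Via \cite{HZ16} this gives a bijection $\ssilt{2}^{\psi}A_{n,m}\cong \ssilt{2}^{g}A_{n}$, and finiteness then follows from Mizuno's theorem \cite{Miz14} that $A_n$ is $\tau$-tilting finite. In short, the two hypotheses are used exactly as you guessed, but the mechanism is a reduction to a \emph{different, $\tau$-tilting finite} algebra rather than a direct combinatorial count.
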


\begin{proposition}\label{prop:derived-class}
Assume that $\gcd(n-1,m)=1$ and $n$ is an odd number.
If $T$ is a tilting object given by iterated irreducible $\nu$-stable mutation from $A_{n,m}$, then the endomorphism algebra $\End_{\Kb(\proj A_{n,m})}(T)$ is isomorphic to $A_{n,m}$. 
\end{proposition}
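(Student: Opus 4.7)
The plan is to proceed by induction on the length $k$ of the sequence of irreducible $\nu$-stable mutations producing $T$ from $A:=A_{n,m}$. The base case $T=A$ is immediate, so assume $T=\mu_X(T')$, where $T'$ is obtained from $A$ by $k-1$ such mutations, $X$ is a minimal $\nu$-stable direct summand of $T'$, and inductively $\End(T')\cong A$.

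The first step is a reduction to a single base case. Fix an isomorphism $\varphi:\End(T')\to A$. Since $T'$ is a tilting object of $\Kb(\proj A)$, the functor $\mathrm{RHom}_{A}(T',-)$ is a derived equivalence $\Db(\mod A)\to\Db(\mod\End(T'))$, and composing with the equivalence induced by $\varphi$ yields a derived auto-equivalence $G$ of $\Db(\mod A)$ with $G(T')\cong A$. Because $\nu$ is a Serre functor, any auto-equivalence of $\Db(\mod A)$ commutes with $\nu$ up to natural isomorphism, so $G$ sends minimal $\nu$-stable summands to minimal $\nu$-stable summands and intertwines irreducible $\nu$-stable mutation. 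Hence $\End(T)=\End(\mu_X(T'))\cong\End(G(\mu_X(T')))\cong\End(\mu_{G(X)}(A))$, and the inductive step reduces to the following single-step claim: for every minimal $\nu$-stable direct summand $Y$ of $A$, $\End(\mu_Y(A))\cong A_{n,m}$.

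To establish this single-step claim, the first task is to identify the Nakayama permutation $v_A$ acting on the vertex set $\mathbb{T}_0$. Via the identification of $A$ as the stable Auslander algebra of a self-injective Nakayama algebra with $m$ simples and Loewy length $n$, this permutation can be read off from the standard formula for the Auslander--Reiten translate on self-injective Nakayama algebras. The assumptions that $n$ is odd and $\gcd(n-1,m)=1$ are used to ensure that every $v_A$-orbit has size exactly two and that there are no fixed points, so each minimal $\nu$-stable direct summand of $A$ has the form $Y=P(i,r)\oplus P(v_A(i,r))$ for a representative $(i,r)$ of a $v_A$-orbit on $\mathbb{T}_0$. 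For each such $Y$, the minimal left $\add(A/Y)$-approximation $Y\to Z$ can then be built explicitly from the mesh relations defining $I$, producing a two-term complex representing $\mu_Y(A)$.

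The final and principal task is to present $\End(\mu_Y(A))$ by quiver and relations and to exhibit an isomorphism onto $A_{n,m}$. I expect this isomorphism to originate from a combinatorial involution of the quiver $\mathbb{T}_{n,m}$ that swaps the two vertices of the chosen $v_A$-orbit while preserving the mesh ideal $I$, with the new arrows in $\End(\mu_Y(A))$ produced by the connecting morphism of the mutation triangle corresponding, via the involution, to arrows of $\mathbb{T}_{n,m}$. The main obstacle will be exactly this explicit bookkeeping: tracking the new arrows, identifying them under the conjectural involution with arrows of $\mathbb{T}_{n,m}$, and verifying that the resulting relations are mesh-type. Once the single-step claim is secured, the inductive argument above yields $\End(T)\cong A_{n,m}$, completing the proof.
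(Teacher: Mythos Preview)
Your inductive strategy and the reduction, via the derived equivalence induced by $T'$, to a single irreducible $\nu$-stable mutation of $A$ itself are correct and match the paper's approach; the paper likewise carries out only the single-step case explicitly and leaves the induction implicit. The genuine gap is in your computation of the $\nu$-orbits. From $\nu P(i,r)\cong P(n-i+1,\,r+i-n)$ one obtains $\nu^{2} P(i,r)\cong P(i,\,r-(n-1))$, and since $\gcd(n-1,m)=1$ the $\nu$-orbit of $P(\ell,r)$ is
\[
\mathcal{O}_\ell=\{(\ell,s):s\in\mathbb{Z}/m\mathbb{Z}\}\cup\{(n-\ell+1,s):s\in\mathbb{Z}/m\mathbb{Z}\},
\]
of cardinality $2m$ (or $m$ when $\ell=\tfrac{n+1}{2}$). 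Hence a minimal $\nu$-stable summand $X_\ell$ of $A$ consists of two entire rows of $\mathbb{T}_{n,m}$, not two indecomposable projectives; your expectation that the isomorphism $\End(\mu_Y(A))\cong A$ arises from an involution of $\mathbb{T}_{n,m}$ swapping two vertices is therefore wrong from the outset, and the approximation and bookkeeping you anticipate would be for the wrong object.

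What the paper actually does for the single step is: write $\mu_{X_\ell}(A)=\bigoplus_{(i,r)}T(i,r)$ with $T(i,r)$ the two-term complex $P(i,r+1)\to P(i-1,r+1)\oplus P(i+1,r)$ for $(i,r)\in\mathcal{O}_\ell$ and $T(i,r)=P(i,r)$ otherwise; construct explicit radical morphisms $x_{i,r}:T(i+1,r)\to T(i,r)$ and $y_{i,r}:T(i-1,r+1)\to T(i,r)$; verify by a four-case computation that $x_{i,r}y_{i+1,r}=y_{i,r}x_{i-1,r+1}$ and that these morphisms generate the radical of $\End(\mu_{X_\ell}(A))$; deduce that the Gabriel quiver is $\mathbb{T}_{n,m}$ via $(i,r)\mapsto T(i,r)$, $a_{i,r}\mapsto x_{i,r}$, $b_{i,r}\mapsto y_{i,r}$; and conclude by a dimension count. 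The bookkeeping is substantial precisely because $2m$ summands are mutated simultaneously, and the induced bijection on vertices is the identity labeling $(i,r)\mapsto T(i,r)$, not a two-vertex swap.
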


Before proving the propositions above, we give a proof of Theorem \ref{thm:counterexample-question} by using them.

\begin{proof}[Proof of Theorem \ref{thm:counterexample-question}]
By Proposition \ref{prop:card-silting-tilting}(1), $A_{n,m}$ is not silting-discrete.
We have only to show that $A_{n,m}$ is tilting-discrete. 
Let $T$ be a tilting object given by iterated irreducible $\nu$-stable mutation from $A_{n,m}$ and put $A_{T}:=\End_{\Kb(\proj A_{n,m})}(T)$.
Since $A_{T}$ is isomorphic to $A_{n,m}$ by Proposition \ref{prop:derived-class}, we have $\ttilt{2}A_{T}\cong \ttilt{2}A_{n,m}$. Hence by Proposition \ref{prop:card-silting-tilting}(2), $\ttilt{2}A_{T}$ is finite.
This implies that $A_{n,m}$ is tilting-discrete by Corollary \ref{cor:AM17-thm}.
The proof is complete.
\end{proof}

As an application, we have the following result, which is an analog of \cite[Corollary 1.4]{AM17}.

\begin{corollary}
Let $n,m$ be positive integers with $\gcd(n-1,m)=1$.
Assume that $n$ is odd and $m$ is not divisible by the characteristic of $K$.
For each tilting object $T\in \Kb(\proj A_{n,m})$, the endomorphism algebra $\End_{\Kb(\proj A_{n,m})}(T)$ is Morita equivalent to $A_{n,m}$.
In particular, the derived equivalence class coincides with the Morita equivalence class. 
\end{corollary}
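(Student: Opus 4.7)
The plan is to reduce any tilting object $T \in \Kb(\proj A_{n,m})$ to one reachable by iterated irreducible $\nu$-stable mutation starting from $A_{n,m}$, and then to invoke Proposition \ref{prop:derived-class}. Write $A := A_{n,m}$ and $\mathcal{T} := \Kb(\proj A)$. Since $A$ is self-injective, Proposition \ref{prop:selfinjective-tilting} guarantees $\tilt \mathcal{T} = \silt^{\nu}\mathcal{T}$, so $T$ is in particular a basic $\nu$-stable silting object.

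First I would apply \cite[Proposition 2.23]{AI12} to produce an integer $N \geq 0$ such that $A \geq T[N]$, and set $T' := T[N]$. Because the shift functor commutes with $\nu$, the object $T'$ is again $\nu$-stable silting. By Theorem \ref{thm:counterexample-question}, $A$ is tilting-discrete and hence $\nu$-stable silting-discrete, so after choosing $d \geq 0$ with $T' \geq A[d]$ (again via \cite[Proposition 2.23]{AI12}), we see that $T'$ lies in the finite set $\ssilt{(d+1)_A}^{\nu}\mathcal{T}$. In particular the interval $\silt^{\nu}[T', A]$ is finite, and Proposition \ref{prop:local-property-nu-mutation}(2) then provides a finite sequence of irreducible $\nu$-stable mutations leading from $A$ to $T'$.

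Next I would invoke Proposition \ref{prop:derived-class} to conclude $\End_{\mathcal{T}}(T') \cong A$. Since $[N]$ is a triangle auto-equivalence, the canonical isomorphism $\End_{\mathcal{T}}(T) \cong \End_{\mathcal{T}}(T[N]) = \End_{\mathcal{T}}(T')$ transports this back to $\End_{\mathcal{T}}(T) \cong A$, which in particular yields Morita equivalence with $A$. For the final assertion, any algebra derived equivalent to $A$ arises by Rickard's theorem as the endomorphism algebra of some tilting object in $\mathcal{T}$, hence is Morita equivalent to $A$ by what has just been established. No genuinely new obstacle arises inside the corollary; all the substance is absorbed into Proposition \ref{prop:derived-class}, and the only care needed here is the invariance of endomorphism algebras under the shift, together with the trick of translating $T$ down by $[N]$ so that it lands in a finite $\nu$-stable silting interval below $A$.
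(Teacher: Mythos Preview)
Your argument is correct and follows essentially the same route as the paper: invoke tilting-discreteness (via Theorem~\ref{thm:counterexample-question}), use Proposition~\ref{prop:local-property-nu-mutation}(2) to reach the given tilting object from $A_{n,m}$ by iterated irreducible $\nu$-stable mutation (up to shift), and then apply Proposition~\ref{prop:derived-class}. The paper's proof is simply a terser version of yours; you have made explicit the shift-by-$[N]$ step, the finiteness of the relevant interval, and the Rickard reduction for the final sentence, all of which the paper leaves implicit. One small point: the corollary does not assume $T$ is basic, so strictly you should first pass to a basic tilting object with the same additive closure before invoking Proposition~\ref{prop:selfinjective-tilting} and the mutation machinery; this is harmless since the conclusion is only Morita equivalence.
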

\begin{proof}
By Theorem \ref{thm:counterexample-question}, $A_{n,m}$ is tilting-discrete.
Due to Proposition \ref{prop:local-property-nu-mutation}, each basic tilting object (up to shift) is given by iterated irreducible $\nu$-stable mutation from $A_{n,m}$.
The assertion follows from Proposition \ref{prop:derived-class}.
\end{proof}

In the rest of this section, we prove Propositions \ref{prop:card-silting-tilting} and \ref{prop:derived-class}.
Let $n,m$ be positive integers.
For a vertex $x\in \mathbb{T}_{0}$, let $P(x):=e_{x} A_{n,m}$, $I(x):=\kD(A_{n,m}e_{x})$, and $S(x)=P(x)/\rad P(x)\cong \soc I(x)$.
Formally put $P(0,r)=P(n+1,r)=0$ for all $r\in \mathbb{Z}/m\mathbb{Z}$.
We identify an element of $\Hom_{A_{n,m}}(P(i,r),P(j,s))$ as an element of $e_{(j,s)}A_{n,m}e_{(i,r)}$.
For two-term objects $U=(U_{1}\xto{d_{U}}U_{0})$ and $V=(V_{1}\xto{d_V}V_{0})$ in $\Kb(\proj A_{n,m})$, we denote by $(\varphi_{1},\varphi_{0})$ a morphism in $\Hom_{\Kb(\proj A_{n,m})}(U,V)$, that is, it satisfies the following commutative diagram:
\begin{align}
\xymatrix{
U_{1}\ar[r]^-{d_{U}}\ar[d]^-{\varphi_{1}}&U_{0}\ar[d]^-{\varphi_{0}}\;\\
V_{1}\ar[r]^-{d_{V}}&V_{0}.
}\notag
\end{align}

\subsection{Combinatorial properties of $A_{n,m}$}
In this subsection, we collect combinatorial properties of $A_{n,m}$.
Fix $(i,r)\in \mathbb{T}_0$ and let $w$ be a path starting from $(i,r)$.
For the sake of simplicity, we frequently write down a path without indices, e.g., $a_{i,r}a_{i+1, r}b_{i+2,r}a_{i+1,r+1}=:aaba=:a^2ba$. 
Then we can regard a path $w$ as a word $\mathbf{w}$ with ``$a$'' and ``$b$''. 
We denote by $a(w)$ (respectively, $b(w)$) the number of ``$a$'' (respectively, ``$b$'') in the word $\mathbf{w}$.
Note that $a(w)=b(w)=0$ if and only if $w=e_{i,r}$.
Then, by the definition of the two-sided ideal $I$, we obtain the following properties. 

\begin{lemma}\label{lem:comb-property}
Under the notation above, the following statements hold.
\begin{itemize}
\item[(1)] $w=w'\neq 0$ in $A_{n,m}$ if and only if $\left(a(w),b(w)\right)=\left(a(w'),b(w')\right)\in [0,n-i] \times [0, i-1]$.
\item[(2)] $\{a^{s}b^{t}\mid (s,t)\in [0, n-i] \times [0,i-1]\}$ gives a $K$-basis of $P(i,r)$.
In particular, we have $\dim_{K} P(i,r)=i(n-i+1)$ and $\dim_{K}A_{n,m}=m \dim_{K}A_{n,1}$.
\item[(3)] $P(i,r)\cong I(n-i+1,r+i-1)$. In particular, $\nu P(i,r)\cong P(n-i+1, r+i-n)$.
\end{itemize}
\end{lemma}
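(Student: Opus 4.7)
The plan is to handle (1) and (2) together using bi-homogeneity of the defining relations plus a dimension comparison, then read (3) off from an explicit socle computation.

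The key observation driving (1) and (2) is that the mesh relation $a_{i,r}b_{i+1,r}=b_{i,r}a_{i-1,r+1}$ preserves both the total number of $a$-arrows and the total number of $b$-arrows appearing in a path. Hence $(a(w),b(w))$ is a well-defined invariant of the residue class of $w$ in $A_{n,m}$, which gives one direction of (1): if $w=w'\ne 0$ in $A_{n,m}$, then $(a(w),b(w))=(a(w'),b(w'))$. Iterated use of the commutation relation lets me rewrite any path from $(i,r)$ in either of the canonical forms $a^{a(w)}b^{b(w)}$ or $b^{b(w)}a^{a(w)}$. A direct trace through $\mathbb{T}_{n,m}$ shows that $a^s b^t$ is non-zero in $K\mathbb{T}_{n,m}$ iff $s\le n-i$ and $t\le i+s-1$, while $b^t a^s$ is non-zero iff $t\le i-1$ and $s\le n-i+t$. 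Since in $A_{n,m}$ the two forms represent the same element, non-vanishing forces both pairs of inequalities simultaneously, and their intersection is exactly $(s,t)\in[0,n-i]\times[0,i-1]$. This simultaneously bounds the bidegrees in (1) and shows that $\{a^s b^t : (s,t)\in [0,n-i]\times [0,i-1]\}$ spans $P(i,r)$.

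The remaining delicate step is linear independence of this spanning set. Bi-homogeneity of the relation prevents any identification across bidegrees, and within a fixed bidegree the normal form is unique up to scalar; the cleanest way to rule out an unexpected collapse is to compare dimensions with the stable Auslander algebra of the self-injective Nakayama algebra with $m$ simples and Loewy length $n$ (see \cite{AR73,Bu98}), whose total dimension is known to equal $m\sum_{i=1}^{n} i(n-i+1)$. This confirms the basis, yields $\dim_K P(i,r)=i(n-i+1)$, and produces the identity $\dim_K A_{n,m}=m\dim_K A_{n,1}$. The backward direction of (1) is then immediate, since every $a^s b^t$ with $(s,t)$ in the prescribed range is a basis element.

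For (3), I would compute the socle of $P(i,r)$ directly from the basis in (2). A basis element $a^s b^t$ is annihilated on the right by $a_{i+s-t,r+t}$ exactly when $s=n-i$, and by $b_{i+s-t,r+t}$ exactly when $t=i-1$, so $\soc P(i,r)$ is one-dimensional, spanned by the longest path $a^{n-i}b^{i-1}$. Tracing its endpoint through the quiver gives vertex $(n-i+1,r+i-1)$, whence $\soc P(i,r)\cong S(n-i+1,r+i-1)$. Since $A_{n,m}$ is self-injective, the indecomposable projective $P(i,r)$ is also indecomposable injective and is determined by its socle, yielding $P(i,r)\cong I(n-i+1,r+i-1)$. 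Combining this with the standard identity $\nu P(x)\cong I(x)$ and inverting the induced Nakayama permutation $v:(j,s)\mapsto (n-j+1,s+j-1)$ produces $\nu P(i,r)\cong P(n-i+1,r+i-n)$. The main obstacle throughout is the linear independence in (2): once that is sealed by the dimension comparison, (3) reduces to a short socle computation.
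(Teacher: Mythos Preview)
The paper does not prove this lemma; it simply records it as a consequence of the definition of $I$. Your argument therefore fills in what the paper omits, and your overall strategy for (1)--(2) (bi-homogeneity plus reduction to the two canonical forms $a^{s}b^{t}$ and $b^{t}a^{s}$) and for (3) (socle computation via the basis) is correct.

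Two points deserve tightening. First, in the bounding step you write that ``non-vanishing forces both pairs of inequalities simultaneously''. The mechanism here should be made explicit: when you push all $a$'s to the left using $b_{j,r'}a_{j-1,r'+1}=a_{j,r'}b_{j+1,r'}$, a step at height $j=n$ produces $0$ (since $a_{n,r'}=0$), so either $w=0$ or the rewriting terminates in a genuine quiver path $a^{s}b^{t}$, which forces $s\le n-i$. The symmetric rewriting yields $t\le i-1$. As written, your sentence could be misread as asserting that a path vanishing in $K\mathbb{T}_{n,m}$ implies vanishing in $A_{n,m}$, which is not the logic you intend.

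Second, the linear independence step is the only place where your argument leaves something external. You appeal to the dimension of the stable Auslander algebra of a self-injective Nakayama algebra as ``known'', without computing it or giving a pointed reference; note also that matching $nm$ non-projective indecomposables against the vertices of $\mathbb{T}_{n,m}$ requires the Nakayama projectives to have length $n+1$, so there is an off-by-one to reconcile with the paper's phrase ``Loewy length $n$''. A more self-contained route is to observe that bi-homogeneity reduces linear independence to the single claim $a^{n-i}b^{i-1}\neq 0$, which you can verify directly (for instance via the diamond lemma with the orientation $b a\to a b$, or by invoking the well-known basis of the type $\mathbf{A}_n$ preprojective algebra for $m=1$ and then lifting through the $\mathbb{Z}/m\mathbb{Z}$-grading). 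Once that is in place, your socle computation in (3) is clean and correct, including the inversion of the Nakayama permutation.
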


\subsection{Proof of Proposition \ref{prop:card-silting-tilting}(1)}

In this subsection, we give a proof of Proposition \ref{prop:card-silting-tilting}(1).

\begin{proof}[Proof of Proposition \ref{prop:card-silting-tilting}(1)]
Assume $n, m\ge 5$. 
Let $e:=e_{4,r-1}+e_{2,r}+e_{3,r}+e_{4,r}+e_{2,r+1}$ for $r\in \mathbb{Z}/m\mathbb{Z}$.
Then $eA_{n,m}e$ is isomorphic to the path algebra of a Euclidean quiver of type $\widetilde{\mathbf{D}}_{4}$.
Hence the set of isomorphism classes of two-term silting objects in $\Kb(\proj eA_{n,m}e)$ is not finite (for example see \cite[Theorem 2.6]{Ad16}).
By \cite[Proposition 2.4]{Ad16} and \cite[Corollary 2.9]{DIJ19}, this implies that $\ssilt{2}A_{n,m}$ is not finite.
This finishes the proof.
\end{proof}

\subsection{$\nu$-stability and $\psi$-stability}
Define an algebra automorphism $\psi:A_{n,m}\to A_{n,m}$ as
\begin{align}
e_{i,r}&\mapsto e_{i,r+1},\notag\\
a_{i,r}&\mapsto a_{i,r+1},\notag\\
b_{i,r}&\mapsto b_{i,r+1}.\notag
\end{align}
Then $\psi$ induces an auto-equivalence $\psi:\mod A_{n,m} \to \mod A_{n,m}$ defined by $\psi(M)a:=M\psi(a)$ for each $M\in\mod A_{n,m}$ and $a\in A_{n,m}$.
By definition, we have $\psi (S(i,r))\cong S(i,r-1)$ and $\psi (P(i,r))\cong P(i,r-1)$.

\begin{lemma}\label{lem:two-stable-silt}
Assume $\gcd(n-1,m)=1$.
Then each $\nu$-stable silting object is $\psi$-stable.
In particular, $\ttilt{2}A_{n,m}$ is a subset of $\ssilt{2}^{\psi}A_{n,m}$.
\end{lemma}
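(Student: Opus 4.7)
The plan is to compare $\nu^{2}$ and $\psi^{n-1}$ as triangle auto-equivalences of $\Kb(\proj A_{n,m})$ and then pass from $\psi^{n-1}$-stability to $\psi$-stability via the B\'ezout identity for $\gcd(n-1,m)=1$. Combining Lemma~\ref{lem:comb-property}(3), which gives $\nu P(i,r)\cong P(n-i+1,r+i-n)$, with the identity $\psi P(i,r)\cong P(i,r-1)$ coming from the definition of $\psi$, a two-fold composition yields
\[
\nu^{2}P(i,r)\cong P(i,r+1-n)\cong\psi^{n-1}P(i,r).
\]
Hence $\nu^{2}$ and $\psi^{n-1}$ agree on isomorphism classes of indecomposable projectives and induce the same automorphism of $K_{0}(\Kb(\proj A_{n,m}))$.

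The substantive task is to upgrade this agreement to a natural isomorphism $\nu^{2}\cong\psi^{n-1}$ of triangle functors. Since $A_{n,m}$ is basic self-injective, $\nu$ is given up to natural isomorphism by tensoring with the Nakayama bimodule $\kD A_{n,m}$, equivalently by twisting by a Nakayama automorphism $\mathrm{v}$ well-defined in $\mathrm{Out}(A_{n,m})$. The claim thus reduces to verifying $[\mathrm{v}^{2}]=[\psi^{n-1}]$ in $\mathrm{Out}(A_{n,m})$. Agreement on idempotents is immediate from the first step. On arrows, one writes $\mathrm{v}(a_{i,r})=c_{i,r}\,b_{n-i+1,r+i-n}$ and $\mathrm{v}(b_{i,r})=d_{i,r}\,a_{n-i+1,r+i-n}$ with scalars subject to the relations defining $I$, and then computes $\mathrm{v}^{2}(a_{i,r})=c_{i,r}\,d_{n-i+1,r+i-n}\,a_{i,r+1-n}$ (and similarly for $b$). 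Comparing with the shift action of $\psi^{n-1}$ up to inner twist produces a system of scalar equations which essentially forces the $d_{j,s}$ to be constant along orbits of shift-by-$(n-1)$ in $\mathbb{Z}/m\mathbb{Z}$; the hypothesis $\gcd(n-1,m)=1$ collapses these orbits onto the whole of $\mathbb{Z}/m\mathbb{Z}$, rendering the system consistent and providing the required inner correction.

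With $\nu^{2}\cong\psi^{n-1}$ in hand, the lemma follows easily. For any $\nu$-stable silting object $T$, applying $\nu$ twice to $\nu T\cong T$ gives $\nu^{2}T\cong T$, and the functorial identity yields $\psi^{n-1}T\cong T$. Since $\psi^{m}=\mathrm{id}$ and $\gcd(n-1,m)=1$, B\'ezout produces an integer $k$ with $k(n-1)\equiv 1\pmod{m}$, so $\psi\cong(\psi^{n-1})^{k}$ as functors, and therefore $\psi T\cong T$. The inclusion $\ttilt{2}A_{n,m}\subseteq\ssilt{2}^{\psi}A_{n,m}$ is then the two-term specialization. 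The main obstacle I anticipate is the middle step: the matching of $\nu^{2}$ and $\psi^{n-1}$ on indecomposable projectives is transparent, but lifting it to an isomorphism of triangle functors (equivalently, the identity $[\mathrm{v}^{2}]=[\psi^{n-1}]$ in $\mathrm{Out}(A_{n,m})$) requires the explicit Nakayama-automorphism calculation on the arrows of $\mathbb{T}_{n,m}$, and it is precisely at this stage that the coprimality $\gcd(n-1,m)=1$ becomes essential.
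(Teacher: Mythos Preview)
Your plan is sound in outline but more ambitious than necessary, and the step you yourself flag as the main obstacle---the functorial identification $\nu^{2}\cong\psi^{n-1}$ via $[\mathrm{v}^{2}]=[\psi^{n-1}]$ in $\mathrm{Out}(A_{n,m})$---is only sketched, not carried out. The scalar system you allude to is never actually solved, so as written this is a gap.

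The paper bypasses that step entirely. From the agreement of $\nu^{2}$ and $\psi^{n-1}$ on indecomposable projectives (your first paragraph) together with B\'ezout, one obtains an integer $s$ with $\nu^{s}P(i,r)\cong\psi P(i,r)$ for all $(i,r)$, so $\nu^{s}$ and $\psi$ induce the same automorphism of $K_{0}(\Kb(\proj A_{n,m}))$. For a two-term silting object $M$, both $\nu^{s}M$ and $\psi M$ are again two-term silting (since $\nu$ and $\psi$ fix $A_{n,m}$ up to isomorphism) and have the same $g$-vector. The paper then invokes \cite[Theorem~5.5]{AIR14}: a two-term silting object is determined up to isomorphism by its $g$-vector, whence $\nu^{s}M\cong\psi M$. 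If $M$ is $\nu$-stable this gives $\psi M\cong M$. No comparison of $\nu$ and $\psi$ as functors is needed---only their effect on $K_{0}$.

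Your route, if the outer-automorphism calculation were completed, would yield $\psi$-stability for \emph{all} $\nu$-stable silting objects, whereas the $g$-vector argument is specific to the two-term case. But only the two-term inclusion $\ttilt{2}A_{n,m}\subseteq\ssilt{2}^{\psi}A_{n,m}$ is used downstream (in Proposition~\ref{prop:card-silting-tilting}(2)), so the paper's simpler argument suffices and avoids the Nakayama-automorphism computation altogether.
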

\begin{proof}
For each $(i,r)\in \mathbb{T}_{0}$, we obtain $\nu^{2}P(i,r)\cong \nu P(n-i+1, r+i-n)\cong P(i, r-(n-1))$.
By $\gcd(n-1,m)=1$, there exists an integer $s>0$ such that $\nu^{s}P(i,r)\cong P(i,r-1)\cong\psi (P(i,r))$.
This induces that for any $M\in\ssilt{2}A_{n,m}$, the $g$-vector of $\nu^{s}M$ coincides with that of $\psi (M)$.
By \cite[Theorem 5.5]{AIR14}, we obtain $\nu^{s}M\cong \psi (M)$.
Therefore all $\nu$-stable silting objects are $\psi$-stable.
\end{proof}

\subsection{The algebra $A_{n,m}$ as a skew group algebra}
Recall the definition of skew group algebras. For detail, see \cite{RR85}.
Let $A$ be a finite dimensional $K$-algebra and $\Aut(A)$ the group of $K$-algebra automorphisms of $A$.
Let $G$ be a finite subgroup of $\Aut(A)$ such that the characteristic of $K$ does not divide the order of $G$ (i.e., the group algebra $KG$ is semisimple).
The \emph{skew group algebra} $A\ast G$ is defined as follows: as a $K$-vector space, $A\ast G =A\otimes_{K}KG$ and the multiplication is given by $(a\otimes g)\cdot(a'\otimes g')=a g(a')\otimes gg'$, where $a,a'\in A$ and $g,g'\in G$.
We write $a\ast g$ instead of $a\otimes g$.
Then $A\ast G$ becomes a finite dimensional $K$-algebra with dimension $|G|\dim_{K}A$.

In this subsection, we assume that $m$ is not divisible by the characteristic of $K$.
Let $A_{n}:=A_{n,1}$.
For simplicity, put $e_{i}:=e_{i,0}$, $a_{i}:=a_{i,0}$ and $b_{i}:=b_{i,0}$ in $A_{n}$.
Let $G_{m}=\langle g \rangle$ be a cyclic group of order $m$ acting on $A_{n}$ as 
\begin{align}
&g(e_{i}):= e_{i},\notag \\
&g(a_{i}):=a_{i},\notag\\
&g(b_{i}):=\zeta b_{i},\notag
\end{align}
where $\zeta\in K$ is an $m$-th primitive root of unity.
First we show that there exists a $K$-algebra isomorphism $\varphi: A_{n,m}\to A_{n}\ast G_{m}$.
Define a $K$-algebra homomorphism
\begin{align}
\phi: K\langle e_{i,r}, a_{j,s}, b_{k,t}\mid 1\le i \le n,\ 1\le j\le n-1,\ 2\le k\le n,\ r,s,t\in \mathbb{Z}/m\mathbb{Z} \rangle \to A_{n}\ast G_{m} \notag
\end{align}
by setting
\begin{align}
&e_{i,r}\mapsto \dfrac{1}{m}\sum_{p=0}^{m-1}\zeta^{pr}e_{i}\ast g^{p},\notag\\
&a_{j,s}\mapsto \dfrac{1}{m}\sum_{p=0}^{m-1}\zeta^{ps}a_{j}\ast g^{p},\notag\\
&b_{k,t}\mapsto \dfrac{1}{m}\sum_{p=0}^{m-1}\zeta^{p(t+1)}b_{k}\ast g^{p}.\notag
\end{align}
Put $a_{0,r}=a_{n,r}=b_{1,r}=b_{n+1,r}=0$.
Then we obtain the following equations
\begin{itemize}
\item $\phi(\sum e_{i,r})=1$,
\item $\phi(e_{i,r}e_{i',r'})=\delta_{i,i'}\delta_{r,r'}\phi(e_{i,r})$,
\item $\phi(e_{i,r}a_{j,s})=\delta_{i,j}\delta_{r,s}\phi(a_{j,s})$,
\item $\phi(a_{j,s}e_{i,r})=\delta_{j+1,i}\delta_{s,r}\phi(a_{j,s})$,
\item $\phi(e_{i,r}b_{k,t})=\delta_{i,k}\delta_{r,t}\phi(b_{k,t})$,
\item $\phi(b_{k,t}e_{i,r})=\delta_{k-1,i}\delta_{t+1,r}\phi(b_{k,y})$,
\item $\phi(a_{i,r}b_{i+1,r})=\phi(b_{i,r}a_{i-1,r+1})$,
\end{itemize}
where $\delta$ is the Kronecker delta.
Therefore, $\phi$ induces a $K$-algebra homomorphism $\varphi: A_{n,m}\to A_{n}\ast G_{m}$.
Since
\begin{align}
\left[\begin{smallmatrix}
1 & 1 &\cdots & 1\\
1 & \zeta & \cdots & \zeta^{m-1}\\
1 & \zeta^{2} & \cdots & \zeta^{2(m-1)}\\
\vdots &\vdots  & & \vdots\\
1 & \zeta^{m-1}&\cdots & \zeta^{(m-1)(m-1)}\\
\end{smallmatrix}\right]
\left[\begin{smallmatrix}
e_{i}\ast 1 & a_{j}\ast 1 & b_{k}\ast 1\\
e_{i}\ast g & a_{j}\ast g & b_{k}\ast g\\
e_{i}\ast g^{2} & a_{j}\ast g^{2} & b_{k}\ast g^2\\
\vdots&\vdots&\vdots\\
e_{i}\ast g^{m-1}& a_{j}\ast g^{m-1} & b_{k}\ast g^{m-1}\\
\end{smallmatrix}\right]=m
\left[\begin{smallmatrix}
\varphi(e_{i,0}) & \varphi(a_{j,0}) & \varphi(b_{k,m-1})\\
\varphi(e_{i,1})& \varphi(a_{j,1}) & \varphi(b_{k,0})\\
\varphi(e_{i,2})& \varphi(a_{j,2}) & \varphi(b_{k,1})\\
\vdots&\vdots&\vdots\\
\varphi(e_{i,m-1})& \varphi(a_{j,m-1})& \varphi(b_{k,m-2})
\end{smallmatrix}\right],\notag
\end{align}
we obtain that $\phi$ and $\varphi$ are surjective.
By $\dim_{K}A_{n,m}=\dim_{K}(A_{n}\ast G_{m})$, the map $\varphi$ is an isomorphism.

Next, following \cite{HZ16}, we compare two-term silting objects of $A_{n}$ with those of $A_{n}\ast G_{m}$.
Let $\mathbb{X}$ be the character group of $G_{m}$.
Since $G_{m}$ is a cyclic group, we have $G_{m}\cong\mathbb{X}=\langle \chi \rangle$, where $\chi(g^{s})=\zeta^s$ for each $s\in \mathbb{Z}$.
Then $\mathbb{X}$ acts on $A_{n}\ast G_{m}$ as
\begin{align}
\chi(a \ast g):=\chi(g)a \ast g =\zeta a\ast g \notag
\end{align}
for each $a\in A_{n}$.
It is easy to check that the following diagram commutes:
\begin{align}
\xymatrix{
A_{n,m}\ar[r]^-{\varphi}\ar[d]^-{\psi}&A_{n}\ast G_{m}\ar[d]^-{\chi}\;\\
A_{n,m}\ar[r]^-{\varphi}&A_{n}\ast G_{m}.
}\notag
\end{align}
Since $\chi$ induces an auto-equivalence $\chi$ on $\mod (A_{n}\ast G_{m})$, we have the following lemma.
\begin{lemma}\label{lem:silt-mesh-skewgroup}
Assume that $m$ is not divisible by the characteristic of $K$.
Then there exist isomorphisms
\begin{align}
\ssilt{2}^{g}A_{n}\cong \ssilt{2}^{\chi}(A_{n}\ast G_{m}) \cong \ssilt{2}^{\psi}A_{n,m}. \notag
\end{align}
\end{lemma}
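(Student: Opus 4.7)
The plan is to establish each of the two isomorphisms separately.

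The second isomorphism $\ssilt{2}^{\chi}(A_{n}\ast G_{m}) \cong \ssilt{2}^{\psi}A_{n,m}$ is essentially formal. The algebra isomorphism $\varphi : A_{n,m}\to A_n\ast G_m$ constructed explicitly above induces a triangle equivalence $\varphi_{\ast} : \Kb(\proj A_{n,m}) \to \Kb(\proj(A_n\ast G_m))$ sending $A_{n,m}$ to $A_n\ast G_m$, which therefore restricts to a bijection between the two-term silting objects on the two sides. The commutative square displayed just before the statement gives $\chi\circ\varphi=\varphi\circ\psi$, so the corresponding auto-equivalences on the bounded homotopy categories satisfy the same relation. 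Hence an object $N$ in $\Kb(\proj A_{n,m})$ is $\psi$-stable if and only if $\varphi_{\ast}(N)$ is $\chi$-stable, yielding the desired bijection.

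For the first isomorphism $\ssilt{2}^{g}A_n\cong\ssilt{2}^{\chi}(A_n\ast G_m)$, I would invoke the skew group algebra dictionary developed in \cite{HZ16}. The assumption that $m$ is not divisible by $\Char K$ ensures $KG_m$ is semisimple, which is precisely the condition needed so that the induction functor
\[
F := -\otimes_{A_n}(A_n\ast G_m) : \Kb(\proj A_n) \to \Kb(\proj(A_n\ast G_m))
\]
and the restriction functor $H$ form an adjoint pair whose composites $FH$ and $HF$ decompose as direct sums indexed by $\mathbb{X}$ and $G_m$, respectively. The crucial compatibility is that $F$ intertwines the $G_m$-action on the $A_n$-side with the $\mathbb{X}$-action on the $A_n\ast G_m$-side, so $F$ restricts to a map from $g$-stable two-term silting objects to $\chi$-stable two-term silting objects, with inverse given by $H$.

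The main obstacle is precisely this first bijection: although the equivariant/skew-group correspondence is classical (\cite{RR85}), one must carefully verify that induction and restriction preserve both the two-term silting condition and the relevant stability. Specifically, one needs to check that $F$ applied to a $g$-stable two-term silting object of $\Kb(\proj A_n)$ is exactly a two-term silting object (not merely two-term pre-silting) and is $\chi$-stable, together with the analogous statement for $H$ in the reverse direction. This is where the content of \cite{HZ16} is used, and once it is in hand the lemma follows by composing the two bijections.
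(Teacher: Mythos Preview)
Your proposal is correct and follows essentially the same approach as the paper: the second isomorphism is obtained formally from the commutative square $\chi\circ\varphi=\varphi\circ\psi$, and the first is delegated to \cite{HZ16}. The paper's proof is terser---it simply notes that $G_{m}$ is solvable (a hypothesis of \cite[Theorem~1.2]{HZ16}, trivially satisfied since $G_{m}$ is cyclic) and cites that theorem directly, whereas you spell out the induction/restriction mechanism underlying it; both arguments rest on the same external result.
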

\begin{proof}
Since $G_{m}$ is solvable, we have an isomorphism $\ssilt{2}^{g}A_{n}\to \ssilt{2}^{\chi}(A_{n}\ast G_{m})$ by \cite[Theorem 1.2]{HZ16}.
Moreover, the commutative diagram above induces an isomorphism $\ssilt{2}^{\chi}(A_{n}\ast G_{m})\cong \ssilt{2}^{\psi}A_{n,m}$.
\end{proof}

\subsection{Proof of Proposition \ref{prop:card-silting-tilting}(2)}

Now we are ready to prove Proposition \ref{prop:card-silting-tilting}(2).

\begin{proof}[Proof of Proposition \ref{prop:card-silting-tilting}(2)]
We show that $\ttilt{2}A_{n,m}$ is a finite set.
If $\gcd(n-1,m)=1$, $\ttilt{2}A_{n,m}$ is a subset of $\ssilt{2}^{\psi}A_{n,m}$ by Lemma \ref{lem:two-stable-silt}.
On the other hand, we assume that $m$ is not divisible by the characteristic of $K$.
Then we have $\ssilt{2}^{\psi}A_{n,m}\cong \ssilt{2}^{g}A_{n}$ by Lemma \ref{lem:silt-mesh-skewgroup}.
Since $A_{n}$ is isomorphic to the preprojective algebra of the Dynkin diagram $\mathbf{A}_{n}$, the set $\ssilt{2}A_{n}$ is finite by \cite[Theorem 0.1]{Miz14}.
Hence $\ssilt{2}^{g}A_{n}$ is also a finite set.
This finishes the proof.
\end{proof}

\subsection{Proof of Proposition \ref{prop:derived-class}}
In this subsection, we prove Proposition \ref{prop:derived-class}. 
Assume that $\gcd(n-1,m)=1$ and $n$ is an odd number.
Let $A:=A_{n,m}$ and $\mathcal{T}:=\Kb(\proj A)$.
For each $(\ell,s)\in \mathbb{T}_{0}$, we denote by $\mathcal{O}(\ell,s)$ the $\nu$-orbit of $P(\ell,s)$.
Since $\gcd(n-1,m)=1$, we obtain
\begin{align}
\mathcal{O}_{\ell}:=\mathcal{O}(\ell,s)=\{(\ell,r)\mid r\in \mathbb{Z}/m\mathbb{Z}\}\cup\{(n-\ell+1,r)\mid r\in \mathbb{Z}/m\mathbb{Z}\}. \notag
\end{align}
Without loss of generality, we may assume $\ell \in [1,\frac{n+1}{2}]$.
Then $X_\ell:=\underset{(i,r)\in \mathcal{O}_{\ell}}{\bigoplus} P(i,r)$ is a minimal $\nu$-stable object of $A$.
Thus we have an irreducible $\nu$-stable mutation $\mu_{X_{\ell}}(A)=\underset{(i,r)\in \mathbb{T}_{0}}{\bigoplus}T(i,r)$, where $T(i,r)$ is a two-term object defined as
\begin{align}
T(i,r)= 
\begin{cases}
\overset{\mathrm{-1st}}{P(i,r+1)}\overset{\left[\begin{smallmatrix} a_{i-1,r+1} \\ -b_{i+1,r}\end{smallmatrix}\right]}{\longrightarrow}\overset{\hspace{7mm}\mathrm{0th}}{P(i-1,r+1)\oplus P(i+1,r)} &(i,r)\in \mathcal{O}_{\ell}\\[3pt]
\overset{\mathrm{0th}}{P(i,r)} &(i,r)\notin \mathcal{O}_{\ell}.
\end{cases}\notag
\end{align}
Since the morphism $\left[\begin{smallmatrix} a_{i-1,r+1} \\ -b_{i+1,r}\end{smallmatrix}\right]:P(i,r-1)\to P(i-1,r+1)\oplus P(i+1,r)$ induces a minimal left $\add(A/P(i,r-1))$-approximation in $\mathcal{T}$, we obtain that the mapping cone $T(i,r)$ is indecomposable by Lemma \ref{lem:approximation-result}.

For each $(i,r)\in\mathbb{T}_{0}$, we define two morphisms $x_{i,r}:T(i+1,r)\to T(i,r)$ and $y_{i,r}:T(i-1,r+1)\to T(i,r)$ as follows:
\begin{itemize}
\item If $(i,r)\in \mathcal{O}_{\ell}$, then $x_{i,r}$ and $y_{i,r}$ are given by the following diagrams respectively:
\begin{align}
\xymatrix{
0\ar[r]\ar[d]&P(i+1,r)\ar[d]^-{\left[\begin{smallmatrix} 0 \\ \mathrm{id}\end{smallmatrix}\right]}\\
P(i,r+1)\ar[r]^-{\left[\begin{smallmatrix} a \\ -b\end{smallmatrix}\right]}&{\begin{matrix}P(i-1,r+1)\\\oplus\\ P(i+1,r)
\end{matrix}}}\hspace{15mm}
\xymatrix{
0\ar[r]\ar[d]&P(i-1,r+1)\ar[d]^-{\left[\begin{smallmatrix} \mathrm{id} \\ 0\end{smallmatrix}\right]}\\
P(i,r+1)\ar[r]^-{\left[\begin{smallmatrix} a \\ -b\end{smallmatrix}\right]}&{\begin{matrix}P(i-1,r+1)\\\oplus\\ P(i+1,r)
\end{matrix}}}\notag
\end{align}
\item If $(i+1, r)\in \mathcal{O}_\ell$, then $x_{i,r}$ is given by the following diagram:
\begin{align}
\xymatrix{
P(i+1,r+1)\ar[r]^-{\left[\begin{smallmatrix} a \\ -b\end{smallmatrix}\right]}\ar[d]&{\begin{matrix}P(i,r+1)\\\oplus\\ P(i+2,r)
\end{matrix}}\ar[d]^-{\left[\begin{smallmatrix} a_{i,r}b_{i+1,r} & a_{i,r}a_{i+1,r}\end{smallmatrix}\right]}\\
0\ar[r]&P(i,r)
}\notag
\end{align}
\item If $(i-1, r+1)\in \mathcal{O}_\ell$, then $y_{i,r}$ is given by the following diagram:
\begin{align}
\xymatrix{
P(i-1,r+2)\ar[r]^-{\left[\begin{smallmatrix} a \\ -b\end{smallmatrix}\right]}\ar[d]&{\begin{matrix}P(i-2,r+2)\\\oplus\\ P(i,r+1)\end{matrix}}\ar[d]^-{\left[\begin{smallmatrix} b_{i,r}b_{i-1,r+1}& b_{i,r}a_{i-1,r+1}\end{smallmatrix}\right]}\\
0\ar[r]&P(i,r)
}\notag
\end{align}

\item If otherwise, then $x_{i,r}$ and $y_{i,r}$ are given by the following diagrams respectively:
\begin{align}
\xymatrix{
0\ar[r]\ar[d]&P(i+1,r)\ar[d]^-{a_{i,r}}\\
0\ar[r]&P(i,r)}
\hspace{15mm}
\xymatrix{
0\ar[r]\ar[d]&P(i-1,r+1)\ar[d]^-{b_{i,r}}\\
0\ar[r]&P(i,r)}
\notag
\end{align}
\end{itemize}
Note that $x_{i,r}\in \rad_{\mathcal{T}}(T(i+1,r),T(i,r))$ and $y_{i,r}\in \rad_{\mathcal{T}}(T(i-1,r+1),T(i,r))$.
Moreover, if $i\in [1,n-1]$ (respectively, $i\in [2,n]$), then $x_{i,r}$ (respectively, $y_{i,r}$) is non-zero in $\mathcal{T}$.

We collect properties of two consecutive morphisms.
\begin{lemma}\label{lem:con-rule}
Fix a vertex $(i,r)\in \mathbb{T}_{0}$. 
Then the following equalities hold.
\begin{itemize}
\item[(1)] For $i\in [1, n-2]$, we have
\begin{align}
x_{i,r}x_{i+1,r}=
\begin{cases}
(0, \left[\begin{smallmatrix}0&0\\a_{i+1,r}b_{i+2,r}&a_{i+1,r}a_{i+2,r}\end{smallmatrix}\right])&((i,r)\in\mathcal{O}_{\ell}, (i+2,r)\in\mathcal{O_{\ell}})\\
(0, \left[\begin{smallmatrix}0\\a_{i+1,r}\end{smallmatrix}\right])&((i,r)\in\mathcal{O}_{\ell}, (i+2,r)\notin\mathcal{O_{\ell}})\\
(0, \left[\begin{smallmatrix}a_{i,r}a_{i+1,r}b_{i+2,r}&a_{i,r}a_{i+1,r}a_{i+2,r}\end{smallmatrix}\right])&((i,r)\notin\mathcal{O}_{\ell}, (i+2,r)\in\mathcal{O_{\ell}})\\
(0, a_{i,r}a_{i+1,r})&((i,r)\notin\mathcal{O}_{\ell}, (i+2,r)\notin\mathcal{O_{\ell}}).
\end{cases}\notag
\end{align}
\item[(2)] For $i\in [3, n]$, we have
\begin{align}
y_{i,r}y_{i-1,r+1}=
\begin{cases}
(0, \left[\begin{smallmatrix}b_{i-1,r+1}b_{i-2,r+2}&b_{i-1,r+1}a_{i-2,r+2}\\0&0\end{smallmatrix}\right])&((i,r)\in\mathcal{O}_{\ell}, (i-2,r+2)\in\mathcal{O_{\ell}})\\[5pt]
(0, \left[\begin{smallmatrix}b_{i-1,r+1}\\0\end{smallmatrix}\right])&((i,r)\in\mathcal{O}_{\ell}, (i-2,r+2)\notin\mathcal{O_{\ell}})\\
(0, \left[\begin{smallmatrix}b_{i,r}b_{i-1,r+1}b_{i-2,r+2}&b_{i,r}b_{i-1,r+1}a_{i-2,r+2}\end{smallmatrix}\right])&((i,r)\notin\mathcal{O}_{\ell}, (i-2,r+2)\in\mathcal{O_{\ell}})\\
(0, b_{i,r}b_{i-1,r+1})&((i,r)\notin\mathcal{O}_{\ell}, (i-2,r+2)\notin\mathcal{O_{\ell}}).
\end{cases}\notag
\end{align}
\item[(3)] For each $1\le i \le n-1$, we have
\begin{align}
x_{i,r}y_{i+1,r}=
\begin{cases}
(0, a_{i,r}b_{i+1,r})&((i,r)\notin\mathcal{O}_{\ell})\\
(0,\left[\begin{smallmatrix}
0&0\\
b_{i+1,r}b_{i,r+1}&b_{i+1,r}a_{i,r+1}
\end{smallmatrix}\right])&((i,r)\in\mathcal{O}_{\ell}).
\end{cases}\notag
\end{align}
In particular, if $i=1$, then $x_{i,r}y_{i+1,r}=0$.
\item[(4)] For each $2\le i\le n$, we have
\begin{align}
y_{i,r}x_{i-1,r+1}=
\begin{cases}
(0, b_{i,r}a_{i-1,r+1})&((i,r)\notin \mathcal{O}_{\ell})\\
(0, \left[\begin{smallmatrix}
a_{i-1,r+1}b_{i,r+1}&a_{i-1,r+1}a_{i,r+1}\\
0&0
\end{smallmatrix}\right])&((i,r)\in \mathcal{O}_{\ell}).
\end{cases}\notag
\end{align}
In particular, if $i=n$, then $y_{i,r}x_{i-1,r+1}=0$.
\end{itemize}
\end{lemma}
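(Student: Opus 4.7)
This is a direct computation in the homotopy category. The morphisms $x_{i,r}$ and $y_{i,r}$ were introduced just above the statement as explicit chain maps between two-term complexes of projective $A_{n,m}$-modules, so each composition in $\mathcal{T}$ is obtained level-wise by matrix multiplication of the displayed component maps. I therefore plan to verify each of the four equalities by a case analysis, organised according to which of the relevant vertices lie in the $\nu$-orbit $\mathcal{O}_\ell$.

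For part (1), the composition $x_{i,r}\circ x_{i+1,r}$ is a chain map $T(i+2,r)\to T(i,r)$ whose explicit form a priori depends on whether $(i,r)$, $(i+1,r)$, and $(i+2,r)$ lie in $\mathcal{O}_\ell$. In the statement only the status of the two outer vertices is recorded, so the plan is to compute the matrix product in each of the eight sub-cases obtained from the three statuses and check that the final answer depends only on the endpoints. The key identities needed in the simplifications are (i) the defining mesh relation $a_{i,r}b_{i+1,r}=b_{i,r}a_{i-1,r+1}$ in $A_{n,m}$, which is invoked to absorb the differential of the middle complex $T(i+1,r)$ whenever $(i+1,r)\in\mathcal{O}_\ell$, and (ii) the boundary conventions $a_{0,r}=a_{n,r}=b_{1,r}=b_{n+1,r}=0$. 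Part (2) then follows by a symmetric argument, via the formal exchange $a\leftrightarrow b$ and the appropriate shift of the second index; parts (3) and (4) treat the two mixed compositions $x_{i,r}y_{i+1,r}$ and $y_{i,r}x_{i-1,r+1}$ by the same scheme.

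The ``in particular'' assertions at the ends of parts (3) and (4) are immediate consequences of the mesh relation together with the boundary conventions: when $i=1$ one has $a_{1,r}b_{2,r}=b_{1,r}a_{0,r+1}=0$, and when $i=n$ one has $b_{n,r}a_{n-1,r+1}=a_{n,r}b_{n+1,r}=0$, so the respective degree-zero components of the compositions vanish identically. The main obstacle is purely organisational: there are on the order of eight sub-cases per equality, and one must carefully distinguish those in which the middle vertex lies in $\mathcal{O}_\ell$ (so that a genuine differential has to be incorporated into the matrix product) from those in which it does not. I expect no conceptual difficulty beyond this bookkeeping.
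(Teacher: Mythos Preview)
Your plan is correct and matches the paper, whose entire proof is the single sentence ``This follows from direct calculations.'' One small correction to your treatment of the ``in particular'' clauses: when $i=1$ and $(1,r)\in\mathcal{O}_\ell$ (which happens precisely when $\ell=1$), the degree-zero component of $x_{1,r}y_{2,r}$ still contains the nonzero entry $b_{2,r}a_{1,r+1}$, so it does not vanish identically; rather, the chain map is null-homotopic via the homotopy $h=-a_{1,r+1}\colon P(2,r+1)\to P(1,r+1)$, using $a_{1,r+1}b_{2,r+1}=b_{1,r+1}a_{0,r+2}=0$. The analogous remark applies to part~(4) with $i=n$. This is only a refinement of the bookkeeping and does not affect your overall case-by-case strategy.
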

\begin{proof}
This follows from direct calculations.
\end{proof}

Combining Lemma \ref{lem:con-rule}(3) and (4), we have the following result which will induce commutative relations in $\End_{\mathcal{T}}(\mu_{X_{\ell}}(A))$. 

\begin{lemma}\label{lem:rel-rule}
For each $(i,r)\in \mathbb{T}_{0}$, we have $x_{i,r}y_{i+1,r}-y_{i,r}x_{i-1,r+1}=0$ in $\mathcal{T}$, where $x_{0,r}=x_{n,r}=y_{1,r}=y_{n+1,r}=0$ for all $r\in \mathbb{Z}/m\mathbb{Z}$.
\end{lemma}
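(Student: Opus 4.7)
The plan is to argue by the case structure inherited from Lemma~\ref{lem:con-rule}: the composition formulas collapse the proof to either an on-the-nose identity in $A_{n,m}$ (the mesh relation) or to exhibiting a single explicit null-homotopy in $\Kb(\proj A_{n,m})$.

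I would first dispose of the boundary values of $i$. When $i=1$, the convention gives $y_{1,r}=0=x_{0,r+1}$, so $y_{i,r}x_{i-1,r+1}=0$; the ``In particular'' clause of Lemma~\ref{lem:con-rule}(3) gives $x_{1,r}y_{2,r}=0$, and the identity is trivially $0=0$. The case $i=n$ is symmetric via the ``In particular'' clause of Lemma~\ref{lem:con-rule}(4). Hence we may reduce to $2\le i\le n-1$. Next, for $(i,r)\notin\mathcal{O}_\ell$, Lemma~\ref{lem:con-rule}(3),(4) collapse both chain maps into the $0$-th degree, and the difference is
\begin{align}
x_{i,r}y_{i+1,r}-y_{i,r}x_{i-1,r+1}=\bigl(0,\; a_{i,r}b_{i+1,r}-b_{i,r}a_{i-1,r+1}\bigr),\notag
\end{align}
which vanishes on the nose since $a_{i,r}b_{i+1,r}=b_{i,r}a_{i-1,r+1}$ is precisely the defining mesh relation of $A_{n,m}$.

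The substantive case is $(i,r)\in\mathcal{O}_\ell$ with $2\le i\le n-1$. Here both $T(i,r)$ and $T(i,r+1)$ are two-term, and the matrices in Lemma~\ref{lem:con-rule}(3),(4) occupy disjoint rows: $x_{i,r}y_{i+1,r}$ is supported in the $P(i+1,r)$-row and $y_{i,r}x_{i-1,r+1}$ in the $P(i-1,r+1)$-row. Thus the difference $\phi:=x_{i,r}y_{i+1,r}-y_{i,r}x_{i-1,r+1}$ is a genuinely nonzero chain map and must be killed by a homotopy. The natural candidate is
\begin{align}
s=(-b_{i,r+1},\,-a_{i,r+1}) : P(i-1,r+2)\oplus P(i+1,r+1) \longrightarrow P(i,r+1),\notag
\end{align}
regarded as a map from $T(i,r+1)_0$ to $T(i,r)_{-1}$. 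I would then verify two identities: composing $s$ with the differential $\left[\begin{smallmatrix}a_{i-1,r+1}\\ -b_{i+1,r}\end{smallmatrix}\right]$ of $T(i,r)$ reproduces, row by row, exactly the four entries $(-a_{i-1,r+1}b_{i,r+1},\,-a_{i-1,r+1}a_{i,r+1})$ and $(b_{i+1,r}b_{i,r+1},\,b_{i+1,r}a_{i,r+1})$ of $\phi^0$; and precomposing $s$ with the differential of $T(i,r+1)$ gives $-b_{i,r+1}a_{i-1,r+2}+a_{i,r+1}b_{i+1,r+1}$, which vanishes by the mesh relation of $A_{n,m}$.

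The main obstacle is bookkeeping rather than conceptual difficulty: one must carefully track orbit membership of the neighboring vertices (so that the implicit block structures agree with the formulas in Lemma~\ref{lem:con-rule}), confirm that the candidate homotopy $s$ is well defined even when components collapse because of the convention $P(0,*)=P(n+1,*)=0$ (for instance when $\ell=1$ and $i=1$), and pin down the correct signs so that $\phi=d_{T(i,r)}\circ s$ holds with the signs appearing in Lemma~\ref{lem:con-rule}. Once these details are aligned, the mesh relation is the only algebraic input, and the proof reduces to the two short matrix verifications above.
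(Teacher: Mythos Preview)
Your proposal is correct and follows essentially the same route as the paper: both arguments use Lemma~\ref{lem:con-rule}(3),(4) to reduce the case $(i,r)\notin\mathcal{O}_\ell$ to the mesh relation, and for $(i,r)\in\mathcal{O}_\ell$ exhibit the same explicit null-homotopy $\bigl[-b_{i,r+1}\;\;-a_{i,r+1}\bigr]$ together with the two matrix checks you describe. The only cosmetic difference is that you treat the boundary indices $i=1,n$ separately up front, whereas the paper absorbs them into the uniform formula via the conventions $a_{0,r}=a_{n,r}=b_{1,r}=b_{n+1,r}=0$; in particular your worry about ``$\ell=1$ and $i=1$'' is already handled by your own boundary-case analysis and does not arise in the substantive case $2\le i\le n-1$.
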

\begin{proof}
By Lemma \ref{lem:con-rule}(3) and (4), we have
\begin{align}
x_{i,r}y_{i+1,r}-y_{i,r}x_{i-1,r+1}=
\begin{cases}
(0,a_{i,r}b_{i+1,r}-b_{i,r}a_{i-1,r+1})&((i,r)\notin\mathcal{O}_{\ell})\;\\
\left(0, \left[\begin{smallmatrix} -a_{i-1,r+1}b_{i,r+1} & -a_{i-1, r+1}a_{i,r+1}\\ b_{i+1,r}b_{i,r+1}& b_{i+1,r}a_{i,r+1}\end{smallmatrix}\right]\right)&((i,r)\in\mathcal{O}_{\ell}).
\end{cases}\notag
\end{align}
In the case where $(i,r)\notin\mathcal{O}_{\ell}$, we obtain $x_{i,r}y_{i-1,r+1}-y_{i,r}x_{i-1,r+1}=0$ because $a_{i,r}b_{i+1,r}-b_{i,r}a_{i-1,r+1}=0$ in $A$.
On the other hand, we assume $(i,r)\in\mathcal{O}_{\ell}$.
By definition, we have
\begin{align}
&\begin{bmatrix}
-b_{i,r+1} & -a_{i,r+1}
\end{bmatrix}
\begin{bmatrix}
a_{i-1,r+2}\\-b_{i+1,r+1}
\end{bmatrix}
=0,\notag\\
&\begin{bmatrix}
a_{i-1,r+1}\\ -b_{i+1,r}
\end{bmatrix}
\begin{bmatrix}
-b_{i,r+1} & -a_{i,r+1}
\end{bmatrix}
=\begin{bmatrix} -a_{i-1,r+1}b_{i,r+1} & -a_{i-1, r+1}a_{i,r+1}\\ b_{i+1,r}b_{i,r+1}& b_{i+1,r}a_{i,r+1}\end{bmatrix}.\notag
\end{align}
Hence $x_{i,r}y_{i+1,r}-y_{i,r}x_{i-1,r+1}=0$ in $\mathcal{T}$.
\end{proof}

From now on, we often regard $\Hom_{\mathcal{T}}(T(i,r),T(j,s))$ as a subset of $\End_{\mathcal{T}}(\mu_{X_{\ell}}(A))$ by natural way.
Then we define subsets $\mathsf{X}, \mathsf{Y}$ of $\End_{\mathcal{T}}(\mu_{X_{\ell}}(A))$ as
\begin{align}
\mathsf{X}:=\{x_{i,r}\mid i\in [1,n-1],\ r\in \mathbb{Z}/m\mathbb{Z} \},\ \mathsf{Y}:=\{y_{i,r}\mid i\in [2,n],\  r\in \mathbb{Z}/m\mathbb{Z} \},\notag 
\end{align}
and let $\mathsf{Z}:=\mathsf{X}\coprod \mathsf{Y}$.

In order to determine the Gabriel quiver of $\End_{\mathcal{T}}(\mu_{X_{\ell}}(A))$, we need the following lemma.

\begin{lemma}\label{lem:local-str}
The following statements hold.
\begin{itemize}
\item[(1)] We have 
\begin{align}
\rad_{\mathcal{T}}(T(i,r), T(j,s))=&\Hom_{\mathcal{T}}(T(i-1,r),T(j,s))x_{i-1,r}\notag\\
&+\Hom_{\mathcal{T}}(T(i+1,r-1),T(j,s))y_{i+1,r-1}.\notag
\end{align}
In particular, $\mathsf{Z}$ generates $\End_{\mathcal{T}}(\mu_{X_{\ell}}(A))$ as a $K$-algebra.
\item[(2)] We have $x_{i-1,r}\notin \Hom_{\mathcal{T}}(T(i+1,r-1),T(i-1,r))y_{i+1,r-1}$.
Moreover,
\begin{align}
x_{i-1,r}\in \rad_{\mathcal{T}}(T(i,r),T(i-1,r))\setminus \rad^{2}_{\mathcal{T}}(T(i,r),T(i-1,r)).\notag
\end{align}
\item[(3)] We have $y_{i+1,r-1}\notin \Hom_{\mathcal{T}}(T(i-1,r),T(i+1,r-1))x_{i-1,r}$.
Moreover,
\begin{align}
y_{i+1,r-1}\in \rad_{\mathcal{T}}(T(i,r),T(i+1,r-1))\setminus \rad^{2}_{\mathcal{T}}(T(i,r),T(i+1,r-1)).\notag
\end{align}
\item[(4)] We have 
\begin{align}
\frac{\rad_{\mathcal{T}}(T(i,r),T(j,s))}{\rad^{2}_{\mathcal{T}}(T(i,r),T(j,s))}=
\begin{cases}
\langle \overline{x_{j,s}} \rangle_{K} &\textnormal{if $(j,s)=(i-1,r)$}\\
\langle \overline{y_{j,s}} \rangle_{K} &\textnormal{if $(j,s)=(i+1,r-1)$}\\
0 & \textnormal{if otherwise}.
\end{cases}\notag
\end{align}
\end{itemize}
\end{lemma}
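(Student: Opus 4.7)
The plan is to establish all four statements by a case analysis depending on whether $(i,r)$ and $(j,s)$ lie in the $\nu$-orbit $\mathcal{O}_\ell$. The starting point is an explicit description of $\Hom_{\mathcal{T}}(T(i,r), T(j,s))$ for each of the four cases, obtained from the explicit form of $T(\cdot, \cdot)$ together with the $K$-basis of $\Hom_A(P(\cdot,\cdot), P(\cdot,\cdot)) = e_{\cdot} A_{n,m} e_{\cdot}$ from Lemma~\ref{lem:comb-property}(2). For statement (1), I would argue that any radical morphism $f : T(i,r) \to T(j,s)$ factors as $g \cdot x_{i-1,r} + h \cdot y_{i+1,r-1}$ for some (not necessarily radical) $g, h$. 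When $(i,r) \notin \mathcal{O}_\ell$, $f$ is represented in degree $0$ by a radical element of $e_{\cdot} A_{n,m} e_{(i,r)}$, which is a $K$-linear combination of paths whose first arrow out of $(i,r)$ is either $a_{i,r}$ or $b_{i,r}$; under the definitions of $x_{i-1,r}$ and $y_{i+1,r-1}$ in their various forms, these two types of factorizations correspond exactly to the two summands on the right-hand side. When $(i,r) \in \mathcal{O}_\ell$, the degree-$0$ part of $T(i,r)$ is $P(i-1,r+1) \oplus P(i+1,r)$, and the maps $x_{i-1,r}$ and $y_{i+1,r-1}$ encode projections onto (or inclusions from) these two summands, so any chain map out of $T(i,r)$ decomposes canonically along this direct sum.

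\textbf{Remaining statements.} For (2) and (3), I would show that $x_{i-1,r}$ and $y_{i+1,r-1}$ lie outside $\rad^{2}_{\mathcal{T}}$ by reading off the compositions in Lemma~\ref{lem:con-rule}: every length-two product $z \cdot z'$ with $z, z' \in \mathsf{Z}$ is represented by a path of length $\ge 2$ in $A_{n,m}$ (or an analogous $2 \times 2$ matrix of such paths in the exceptional cases), whereas $x_{i-1,r}$ and $y_{i+1,r-1}$ are primitive in the sense that they are a single arrow or an elementary inclusion/projection; the same formulas simultaneously show that neither factors through the other. Statement (4) then follows by combining (1)--(3): (1) expresses any element of $\rad_{\mathcal{T}}(T(i,r), T(j,s))$ as a linear combination of compositions starting with $x_{i-1,r}$ or $y_{i+1,r-1}$; modulo $\rad^{2}$, only the terms where the remaining factor is (a scalar multiple of) the identity survive, forcing $(j,s) \in \{(i-1,r), (i+1,r-1)\}$; and (2)--(3) guarantee that in those two cases the surviving class is indeed nonzero.

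\textbf{Main obstacle.} The subtlest step will be the subcase of (1) where both $(i,r), (j,s) \in \mathcal{O}_\ell$: here $\Hom_{\mathcal{T}}(T(i,r), T(j,s))$ is computed from chain maps modulo homotopy between two two-term complexes, and the homotopy subspace must be identified carefully to avoid extraneous or missing dimensions. The key tool is the minimal left approximation property of both differentials, guaranteed by Lemma~\ref{lem:approximation-result} applied to the mutation triangles: this allows precise control over which chain maps are null-homotopic, ensuring that after reducing modulo homotopy the Hom space is spanned exactly by products involving $x_{i-1,r}$ and $y_{i+1,r-1}$.
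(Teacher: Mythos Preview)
Your overall architecture—case analysis on $\mathcal{O}_\ell$-membership for (1), then deducing (2)--(4)—matches the paper, and your argument for (4) is essentially the paper's. But there are two real problems.

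\textbf{A factual error in (1).} When $(i,r)\in\mathcal{O}_\ell$, the outgoing maps $x_{i-1,r}\colon T(i,r)\to T(i-1,r)$ and $y_{i+1,r-1}\colon T(i,r)\to T(i+1,r-1)$ are \emph{not} projections onto the summands $P(i-1,r+1)$ and $P(i+1,r)$. By the definitions just above Lemma~\ref{lem:con-rule}, their degree-$0$ components are $[a_{i-1,r}b_{i,r},\ a_{i-1,r}a_{i,r}]$ and $[b_{i+1,r-1}b_{i,r},\ b_{i+1,r-1}a_{i,r}]$, i.e.\ length-$2$ paths. (It is the \emph{incoming} maps $x_{i,r},y_{i,r}$ that are inclusions.) Consequently a chain map $(0,[f_1,f_2])\colon T(i,r)\to T(j,s)$ does \emph{not} trivially decompose as claimed: one must use the chain-map constraint $f_1a_{i-1,r+1}-f_2b_{i+1,r}=0$ to force the ``free'' pieces of $f_1,f_2$ to vanish and then exhibit a common factor. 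This is the paper's case~(a), and it is the computational core; case~(b) (your ``main obstacle'') is in fact handled in the paper by a short homotopy reduction followed by factoring through the inclusions $x_{j,s},y_{j,s}$ into $T(j,s)$, thereby reducing to~(a). So you have both mis-described the maps and mis-located the difficulty.

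\textbf{A different route for (2) and (3).} Your plan to distinguish $x_{i-1,r}$ from anything in $\Hom(\cdot)\,y_{i+1,r-1}$ by ``path length'' is shaky precisely because $x_{i-1,r}$ itself has length~$2$ when $(i,r)\in\mathcal{O}_\ell$, so it is not ``primitive'' in the sense you describe. The paper avoids any such bookkeeping: assuming $x_{i-1,r}=fy_{i+1,r-1}$, it applies (1) iteratively together with the commutation relation of Lemma~\ref{lem:rel-rule} to write $f=gx_{i,r-1}$ (the pure-$y$ part vanishes for degree reasons), whence $x_{i-1,r}=gx_{i,r-1}y_{i+1,r-1}=gy_{i,r-1}x_{i-1,r}$, so $(\mathrm{id}-gy_{i,r-1})x_{i-1,r}=0$ with $\mathrm{id}-gy_{i,r-1}$ invertible, a contradiction. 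This algebraic rewriting handles all cases uniformly and is what you should adopt; a direct length comparison would require a separate verification in each $\mathcal{O}_\ell$-configuration and does not follow from Lemma~\ref{lem:con-rule} alone.
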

\begin{proof}
(1) Since $x_{i-1,r}$ and $y_{i+1,r-1}$ are in the radical of $\mathcal{T}$, it is enough to show that for each non-isomorphic morphism $\varphi: T(i,r)\to T(j,s)$, there exist morphisms $g':T(i-1,r)\to T(j,s)$ and $g'':T(i+1,r-1)\to T(j,s)$ such that $\varphi=g'x_{i-1,r}+g''y_{i+1,r-1}$.
Consider the following four cases:
\begin{itemize}
\item[(a)] $(i,r)\in \mathcal{O}_\ell$, $(j,s)\not\in \mathcal{O}_\ell$.
\item[(b)] $(i,r)\in \mathcal{O}_\ell$, $(j,s)\in \mathcal{O}_\ell$.
\item[(c)] $(i,r)\not\in \mathcal{O}_\ell$, $(j,s)\in \mathcal{O}_\ell$.
\item[(d)] $(i,r)\not\in \mathcal{O}_\ell$, $(j,s)\not\in \mathcal{O}_\ell$.
\end{itemize}

\underline{Case (a)}: Assume $\varphi=(0,\left[\begin{smallmatrix} f_{1} & f_{2} \end{smallmatrix}\right])$ with $f_{1}: P(i-1,r+1)\to P(j,s)$ and $f_{2}: P(i+1,r)\to P(j,s)$.
By commutative relations of $A$, we can write $f_{1}=g_{1}b_{i,r}+\mathbf{a}$ and $f_{2}=g_{2}a_{i,r}+\mathbf{b}$, where $g_{1},g_{2}\in \Hom_{A}(P(i,r),P(j,s))$, $\mathbf{a}\in\langle a^{k}\mid k\in \mathbb{Z}\rangle_{K}$ and $\mathbf{b}\in \langle b^{k}\mid k\in\mathbb{Z}\rangle_{K}$.
Since 
\begin{align}
0=\begin{bmatrix} f_{1}&f_{2}\end{bmatrix}\begin{bmatrix} a_{i-1,r+1}\\-b_{i+1,r}\end{bmatrix}=f_{1}a_{i-1,r+1}-f_{2}b_{i+1,r}, \notag
\end{align}
we have $(g_{1}-g_{2})a_{i,r}b_{i+1,r}=0$, $\mathbf{a}=0$ and $\mathbf{b}=0$.
Assume $g_{1}-g_{2}\neq 0$ and let $g_{1}-g_{2}=ha^{p}b^{q}$, where $h$ is an invertible element in $e_{j,s}Ae_{j,s}$.
Then we have $ha^{p}b^{q}a_{i,r}b_{i+1,r}$=0, and hence $a^{p+1}b^{q+1}=0$.
By Lemma \ref{lem:comb-property}(1), we obtain $p=n-j$ or $q=j-1$.
If $p=n-j$, then $f_{2}=g_{2}a_{i,r}=(g_{1}-ha^{p}b^{q})a_{i,r}=g_{1}a_{i,r}-ha^{p+1}b^{q}=g_{1}a_{i,r}$.
Since we can take $g_{1}$ as $g_{1}=g'_{1}b_{i+1,r-1}+g''_{1}a_{i-1,r}$ for some $g'_{1}\in \Hom_{A}(P(i+1,r-1),P(j,s))$ and $g''_{1}\in \Hom_{A}(P(i-1,r),P(j,s))$, we have 
$\left[\begin{smallmatrix} f_{1}&f_{2}\end{smallmatrix}\right]=\left[\begin{smallmatrix}g'_{1}&g''_{1}\end{smallmatrix}\right]\left[\begin{smallmatrix}b_{i+1,r-1}b_{i,r}&b_{i+1,r-1}a_{i,r}\\a_{i-1,r}b_{i,r}&a_{i-1,r}a_{i,r}\end{smallmatrix}\right]$.
Hence $\varphi=(0,g''_{1})x_{i-1,r}+(0,g'_{1})y_{i+1,r-1}$.
For the remain cases, we have the assertion by a similar argument. 

\underline{Case (b)}: Assume $\varphi=(\varphi_{1},\varphi_{0})$ with
\begin{align}
\varphi_{0}=\begin{bmatrix}
f_{11} & f_{12}\\
f_{21} & f_{22}\\
\end{bmatrix},\notag
\end{align}
where $f_{11}:P(i-1,r+1)\to P(j-1,s+1)$, $f_{12}:P(i+1,r)\to P(j-1,s+1)$, $f_{21}:P(i-1,r+1)\to P(j+1,s)$, and $f_{22}:P(i+1,r)\to P(j+1,s)$.
Then we can write $\varphi_{1}$ as $\varphi_{1}=g'a_{i-1,r+1}+g''b_{i+1,r}$ for some $g'\in \Hom_{A}(P(i-1,r+1),P(j,s+1))$ and $g''\in \Hom_{A}(P(i+1,r),P(j,s+1))$.
Let $\varphi'_{0}:=\varphi_{0}-\left[\begin{smallmatrix}a_{j-1,s+1}\\-b_{j+1,s}\end{smallmatrix}\right]\left[\begin{smallmatrix}g'&-g''\end{smallmatrix}\right]$.
Since $\varphi'_{0}\left[\begin{smallmatrix}a_{i-1,r+1}\\-b_{i+1,r}\end{smallmatrix}\right]=0$ holds, we have a morphism $(0,\varphi'_{0})$ in $\mathcal{T}$.
By direct calculation, $(\varphi_{1},\varphi_{0})$ is homotopic to $(0,\varphi'_{0})$.
Hence we may always assume $\varphi_{1}=0$.
Therefore we have 
\begin{align}
\varphi=y_{j,s}(0, \left[\begin{smallmatrix}f_{11}&f_{12}\end{smallmatrix}\right])+x_{j,s}(0, \left[\begin{smallmatrix}f_{21}&f_{22}\end{smallmatrix}\right]).\notag
\end{align}
By $(j+1,s),(j-1,s+1)\notin\mathcal{O}_{\ell}$, the assertion follows from the case (a).

\underline{Case (c)}: Assume $\varphi =\left(0,\left[\begin{smallmatrix}f_{1}\\f_{2}\end{smallmatrix}\right]\right)$ with $f_{1}:P(i,r)\to P(j-1,s+1)$ and $f_{2}:P(i,r)\to P(j+1,s)$.
We may assume $f_{1}=a^{p}b^{q}\neq 0$ and $f_{2}=a^{p'}b^{q'}\neq 0$ for some $p,q,p',q'\ge 0$.

If $(i+1,r-1),(i-1,r)\in \mathcal{O}_{\ell}$, then we have $\left(0,\left[\begin{smallmatrix}f_{1}\\0\end{smallmatrix}\right]\right)=\psi_{1}y_{i+1,r-1}$ and $\left(0,\left[\begin{smallmatrix}0\\f_{2}\end{smallmatrix}\right]\right)=\psi_{2}x_{i-1,r}$, where $\psi_{1}:T(i+1,r-1)\to T(j,s)$ and $\psi_{2}:T(i-1,r)\to T(j,s)$ are given by the following diagrams:
\begin{align}
\xymatrix{
P(i+1,r)\ar[r]^-{\left[\begin{smallmatrix} a \\ -b\end{smallmatrix}\right]}\ar[d]^-{a^{p}b^{q}}&{\begin{matrix}P(i,r)\\\oplus\\ P(i+2,r-1)\end{matrix}}\ar[d]^-{\left[\begin{smallmatrix} f_{1}&0 \\ 0&a^{p}b^{q}\end{smallmatrix}\right]}\\
P(j,s+1)\ar[r]^-{\left[\begin{smallmatrix} a \\ -b\end{smallmatrix}\right]}&{\begin{matrix}P(j-1,s+1)\\\oplus\\ P(j+1,s)\end{matrix}}
}\hspace{10mm}
\xymatrix{
P(i-1,r+1)\ar[r]^-{\left[\begin{smallmatrix} a \\ -b\end{smallmatrix}\right]}\ar[d]^-{a^{p'}b^{q'}}&{\begin{matrix}P(i-2,r+1)\\\oplus\\ P(i,r)\end{matrix}}\ar[d]^-{\left[\begin{smallmatrix} a^{p'}b^{q'}&0 \\ 0&f_{2}\end{smallmatrix}\right]}\\
P(j,s+1)\ar[r]^-{\left[\begin{smallmatrix} a \\ -b\end{smallmatrix}\right]}&{\begin{matrix}P(j-1,s+1)\\\oplus\\ P(j+1,s)\end{matrix}}
}\notag
\end{align}

If $(i+1,r-1)\in \mathcal{O}_{\ell}$ and $(i-1,r)\notin \mathcal{O}_{\ell}$, then we have $\left(0,\left[\begin{smallmatrix}f_{1}\\0\end{smallmatrix}\right]\right)=\psi_{1}y_{i+1,r-1}$ and 
\begin{align}
\left(0,\left[\begin{smallmatrix}0\\f_{2}\end{smallmatrix}\right]\right)=
\begin{cases}
\ \left(0,\left[\begin{smallmatrix}0\\a^{p'-1}b^{q'}\end{smallmatrix}\right]\right)x_{i-1,r} &(p'>0)\\
\ \psi_{3}y_{i+1,r-1} &(p'=0),
\end{cases}\notag
\end{align}
where $\psi_{3}$ is given by the following morphism:
\begin{align}
\xymatrix{
P(i+1,r)\ar[r]^-{\left[\begin{smallmatrix} a \\ -b\end{smallmatrix}\right]}\ar[d]^-{-ab^{q'-1}}&{\begin{matrix}P(i,r)\\\oplus\\ P(i+2,r-1)\end{matrix}}\ar[d]^-{\left[\begin{smallmatrix} 0&a^{2}b^{q'-2} \\ f_{2}&0\end{smallmatrix}\right]}\\
P(j,s+1)\ar[r]^-{\left[\begin{smallmatrix} a \\ -b\end{smallmatrix}\right]}&{\begin{matrix}P(j-1,s+1)\\\oplus\\ P(j+1,s)\end{matrix}}
}\notag
\end{align}
Note that if $p'=0$, then $q'\ge 2$ by $(j,s+1),(i+1,r-1)\in \mathcal{O}_{\ell}$. 
For $(i+1,r-1)\notin\mathcal{O}_{\ell}$ and $(i-1,r)\in \mathcal{O}_{\ell}$, we obtain the assertion by a similar argument.

If $(i+1,r-1), (i-1,r)\notin \mathcal{O}_{\ell}$, then we have 
\begin{align}
\left(0, \left[\begin{smallmatrix}f_{1}\\0\end{smallmatrix}\right]\right)=
\begin{cases}
\left(0, \left[\begin{smallmatrix}a^{p-1}\\0\end{smallmatrix}\right]\right)x_{i-1,r}& (q= 0)\\
\left(0, \left[\begin{smallmatrix}a^{p}b^{q-1}\\0\end{smallmatrix}\right]\right)y_{i+1,r-1}& (q\neq 0)
\end{cases}\notag
\end{align}
and 
\begin{align}
\left(0, \left[\begin{smallmatrix}0\\f_{2}\end{smallmatrix}\right]\right)=
\begin{cases}
\left(0, \left[\begin{smallmatrix}0\\a^{p'-1}\end{smallmatrix}\right]\right)x_{i-1,r}& (q'= 0)\\[5pt]
\left(0, \left[\begin{smallmatrix}0\\a^{p'}b^{q'-1}\end{smallmatrix}\right]\right)y_{i+1,r-1}& (q'\neq 0)
\end{cases}\notag
\end{align}

\underline{Case (d)}: Assume $\varphi =(0,f)$ with non-zero $f:P(i,r)\to P(j,s)$. 
We may assume $f=a^{p}b^{q}$ for some $p,q\ge 0$.
If $q\neq 0$, then we have 
\begin{align}
\varphi=
\begin{cases}
(0,a^{p}b^{q-1})y_{i+1,r-1} & ((i+1,r-1)\notin \mathcal{O}_{\ell})\\
(0,[\begin{smallmatrix}f&a^{p+1}b^{q-1}\end{smallmatrix}])y_{i+1,r-1} &((i+1,r-1)\in \mathcal{O}_{\ell}).
\end{cases}\notag
\end{align}
On the other hand, if $q=0$, then we have
\begin{align}
\varphi=
\begin{cases}
(0,a^{p-1})x_{i-1,r} &((i-1,r)\notin \mathcal{O}_{\ell})\\
(0,[\begin{smallmatrix}a^{p-1}b&f\end{smallmatrix}])x_{i-1,r} &((i-1,r)\in \mathcal{O}_{\ell}).
\end{cases}\notag
\end{align}

Therefore we have the assertion.

(2) First we show $x_{i-1,r}\notin \Hom_{\mathcal{T}}(T(i+1,r-1),T(i-1,r))y_{i+1,r-1}$.
Suppose to contrary that $x_{i-1,r}\in \Hom_{\mathcal{T}}(T(i+1,r-1),T(i-1,r))y_{i+1,r-1}$.
Then there exists a non-isomorphic morphism $f:T(i+1,r-1)\to T(i-1,r)$ such that $x_{i-1,r}=fy_{i+1,r-1}$.
By repeated use of (1) and Lemma \ref{lem:rel-rule}, we can write $f=gx_{i,r-1}+\mathbf{y}$ with $g\in \Hom_{\mathcal{T}}(T(i,r-1),T(i-1,r))$ and $\mathbf{y}\in \langle \mathsf{Y} \rangle_{K}$.
Comparing the domain and codomain of $f$, we have $\mathbf{y}=0$.
By Lemma \ref{lem:rel-rule}, 
\begin{align}
x_{i-1,r}=fy_{i+1,r-1}=gx_{i,r-1}y_{i+1,r-1}=gy_{i,r-1}x_{i-1,r}.\notag
\end{align}
This implies $(\mathrm{id}-gy_{i,r-1})x_{i-1,r}=0$. 
Since $\mathrm{id}-gy_{i,r-1}\in \End_{\mathcal{T}}(T(i-1,r))$ is invertible, we have $x_{i-1,r}=0$, a contradiction.

Next we show $x_{i-1,r}\in \rad_{\mathcal{T}}(T(i,r),T(i-1,r))\setminus \rad^{2}_{\mathcal{T}}(T(i,r),T(i-1,r))$.
Suppose to the contrary that $x_{i-1,r}\in \rad_{\mathcal{T}}^{2}(T(i,r),T(i-1,r))$.
We can write $x_{i-1,r}=\sum_{k}f_{k}g_{k}$ with $f_{k}, g_{k}$ radical morphisms.
By (1), we obtain $g_{k}=g'_{k}x_{i-1,r}+g''_{k}y_{i+1,r-1}$ for some morphisms $g'_{k}, g''_{k}$.
Thus $(\mathrm{id}-\sum_{k}f_{k}g'_{k})x_{i-1,r}=\sum_{k}f_{k}g''_{k}y_{i+1,r-1}$.
Since $f_{k}g'_{k}$ is a radical morphism, we have $x_{i-1,r}\in \Hom_{\mathcal{T}}(T(i+1,r-1), T(i-1,r))y_{i+1,r-1}$, a contradiction. 

(3) By an argument similar to (2), we have the assertion.

(4) Let $f\in \rad_{\mathcal{T}}(T(i,r),T(j,s))$. 
By (1), we have $f=f'x_{i-1,r}+f''y_{i+1,r-1}$ for some $f'\in \Hom_{\mathcal{T}}(T(i-1,r),T(j,s))$ and $f''\in \Hom_{\mathcal{T}}(T(i+1,r-1),T(j,s))$.
Note that $x_{i-1,r}$ and $y_{i+1,r-1}$ belong to the radical of $\mathcal{T}$.
If $(j,s)$ is neither $(i-1,r)$ nor $(i+1,r-1)$, then $f'$ and $f''$ are in the radical of $\mathcal{T}$.
Hence $f\in \rad_{\mathcal{T}}^{2}(T(i,r),T(j,s))$.
Assume $(j,s)=(i-1,r)$. Then $f''\in \rad_{\mathcal{T}}(T(i+1,r-1),T(j,s))$. 
Hence the assertion follows from (2).
For $(j,s)=(i+1,r-1)$, the proof is similar.
\end{proof}

By Lemma \ref{lem:local-str}(4), the Gabriel quiver of $\End_{\mathcal{T}}(\mu_{X_{\ell}}(A))$ is isomorphic to $\mathbb{T}_{n,m}$.
Hence there exists a surjective map $\Phi: K\mathbb{T}_{n,m}\to \End_{\mathcal{T}}(\mu_{X_{\ell}}(A))$ given by
\begin{align}
(i,r)&\mapsto T(i,r)\notag\\
a_{i,r}&\mapsto x_{i,r}\notag\\
b_{i,r}&\mapsto y_{i,r}.\notag
\end{align}
Moreover, by Lemma \ref{lem:rel-rule}, we have $I\subset \ker\Phi$, where $I$ is the two-sided ideal generated by $a_{i,r}b_{i+1,r}-b_{i,r}a_{i-1,r+1}$ for all $i\in[1,n]$ and $r\in \mathbb{Z}/m\mathbb{Z}$.

To complete the proof of Proposition \ref{prop:derived-class}, we compare the dimension of $\End_{\mathcal{T}}(\mu_{X_{\ell}}(A))$ with that of $A$.
Let $f=z_{1}z_{2}\cdots z_{k}$ be a morphism in $\mathcal{T}$ with $z_{1},z_{2},\ldots, z_{k}\in \mathsf{Z}$.
For the sake of simplicity, we often write down a morphism without indices, e.g., $x_{i,r}x_{i+1, r}y_{i+2,r}y_{i+1,r+1}=:xxyx=:x^{2}yx$. 
Then we can regard the morphism $f$ as a word $\mathbf{f}$ with ``$x$'' and ``$y$''. 
We denote by $x(f)$ (respectively, $y(f)$) the number of ``$x$'' (respectively, ``$y$'') in the word $\mathbf{f}$.
Note that $x(f)=y(f)=0$ if and only if $f=\mathrm{id}$.

\begin{lemma}\label{lem:dim-end-mut}
Keep the notation above.
Fix $(i,r)\in\mathbb{T}_{0}$ and the codomain of $f$ is $T(i,r)$.
Then the following statements hold.
\begin{itemize}
\item[(1)] $f\neq 0$ if and only if $(x(f),y(f))\in [0,n-i]\times [0,i-1]$. 
\item[(2)] $\mathbb{B}:=\{ x^{p}y^{q}\mid (p,q)\in [0,n-i]\times [0,i-1]\}$ forms a basis of $\Hom_{\mathcal{T}}(\mu_{X_{\ell}}(A),T(i,r))$.
\item[(3)] $\dim_{K}\Hom_{\mathcal{T}}(\mu_{X_{\ell}}(A),T(i,r))=\dim_{K}P(i,r)$.
\end{itemize}
\end{lemma}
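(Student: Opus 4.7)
The plan is to first establish (1), and then derive (2) and (3) from it together with the surjection $\overline{\Phi}:A_{n,m}\to \End_{\mathcal{T}}(\mu_{X_{\ell}}(A))$ induced by $\Phi$, which is already available from the discussion preceding the lemma.

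For the vanishing direction of (1), suppose $f=z_{1}z_{2}\cdots z_{k}$ is non-zero with codomain $T(i,r)$ and set $p=x(f)$, $q=y(f)$. The idea is to exploit the commutation relation $x_{j,s}y_{j+1,s}=y_{j,s}x_{j-1,s+1}$ of Lemma~\ref{lem:rel-rule} iteratively to rewrite $f$ in two normal forms, the ``$xy$-form''
\[
x_{i,r}x_{i+1,r}\cdots x_{i+p-1,r}\cdot y_{i+p,r}y_{i+p-1,r+1}\cdots y_{i+p-q+1,r+q-1}
\]
and the ``$yx$-form''
\[
y_{i,r}y_{i-1,r+1}\cdots y_{i-q+1,r+q-1}\cdot x_{i-q,r+q}\cdots x_{i+p-q-1,r+q}.
\]
If $p>n-i$ then the $xy$-form contains the zero factor $x_{i+p-1,r}$ (since $i+p-1\ge n$); if $q>i-1$ then the $yx$-form contains $y_{i-q+1,r+q-1}=0$ (since $i-q+1\le 1$). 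Either way $f=0$, so non-vanishing forces $(p,q)\in[0,n-i]\times[0,i-1]$.

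For the non-vanishing direction of (1), I plan to compute $f=x^{p}y^{q}$ explicitly as a chain map from $T(i+p-q,r+q)$ to $T(i,r)$ using the four cases of Lemma~\ref{lem:con-rule}, and to show that it is not null-homotopic. In the ``generic'' situation in which none of the vertices appearing in the composition lies on the $\nu$-orbit $\mathcal{O}_{\ell}$, every $T(j,s)$ involved is just $P(j,s)$ and every $x_{j,s}$, $y_{j,s}$ reduces to multiplication by $a_{j,s}$, $b_{j,s}$, so $f=(0,a^{p}b^{q})$, which is non-zero in $\Hom_{A}(P(i+p-q,r+q),P(i,r))$ by Lemma~\ref{lem:comb-property}(1) under the assumption $(p,q)\in[0,n-i]\times[0,i-1]$. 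When the composition enters or leaves $\mathcal{O}_{\ell}$, the extra matrix entries from Lemma~\ref{lem:con-rule} must be tracked, but the essential point is that the resulting chain map still contains, as one of its components, a non-zero monomial $a^{p'}b^{q'}$ in the admissible range for the relevant vertex.

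Parts (2) and (3) will then follow quickly. Linear independence of $\mathbb{B}$ is automatic, since distinct $(p,q)\in[0,n-i]\times[0,i-1]$ give morphisms with distinct sources $T(i+p-q,r+q)$, placing them in pairwise distinct direct summands of $\Hom_{\mathcal{T}}(\mu_{X_{\ell}}(A),T(i,r))=\bigoplus_{(j,s)\in\mathbb{T}_{0}}\Hom_{\mathcal{T}}(T(j,s),T(i,r))$, while (1) guarantees each one is non-zero. On the other hand, the surjectivity of $\overline{\Phi}$ yields
\[
\dim_{K}\Hom_{\mathcal{T}}(\mu_{X_{\ell}}(A),T(i,r))\le \dim_{K}e_{(i,r)}A_{n,m}=i(n-i+1)=|\mathbb{B}|,
\]
using Lemma~\ref{lem:comb-property}(2). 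Together these bounds force $\mathbb{B}$ to be a basis and give the dimension formula in (3).

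The hard part will be the non-vanishing in (1): the case analysis of Lemma~\ref{lem:con-rule} is delicate because the matrix form of $x^{p}y^{q}$ changes each time the sequence of vertices meets $\mathcal{O}_{\ell}$, and one has to verify that no accidental cancellation takes place. Conceptually this is equivalent to showing that the surjection $\overline{\Phi}$ is in fact an isomorphism, which is exactly the statement needed for Proposition~\ref{prop:derived-class}.
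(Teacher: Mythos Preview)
Your linear-independence step in (2) contains a genuine error. You claim that distinct pairs $(p,q)\in[0,n-i]\times[0,i-1]$ produce morphisms $x^{p}y^{q}$ with distinct sources $T(i+p-q,r+q)$, but the second coordinate is only well-defined modulo $m$, and nothing in the standing hypotheses ($n$ odd, $\gcd(n-1,m)=1$) forces $m\ge i$. For a concrete failure take $n=13$, $m=5$, $i=7$: then $(p,q)=(0,0)$ and $(p,q)=(5,5)$ both lie in $[0,6]\times[0,6]$ and both give source $T(7,r)$, so $\mathrm{id}_{T(7,r)}$ and $x^{5}y^{5}$ sit in the same summand $\Hom_{\mathcal{T}}(T(7,r),T(7,r))$. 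The ``distinct summands'' argument therefore does not establish linear independence, and since your upper bound from the surjection $\overline{\Phi}$ and the spanning of $\mathbb{B}$ both point in the same direction, you are left with no lower bound on $\dim_{K}\Hom_{\mathcal{T}}(\mu_{X_{\ell}}(A),T(i,r))$.

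The paper's argument for linear independence proceeds instead by a filtration: if several $x^{p}y^{q}$ share a source then $p-q$ is constant, so the pairs are totally ordered; taking the minimal $(p_{0},q_{0})$ and using Lemma~\ref{lem:rel-rule} one writes $x^{p}y^{q}=x^{p_{0}}y^{q_{0}}\cdot z_{p,q}$ with $z_{p,q}$ a radical endomorphism, whence any non-trivial dependence becomes $x^{p_{0}}y^{q_{0}}(1+z)=0$ with $1+z$ invertible, contradicting the non-vanishing in (1). You will need an argument of this kind.

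A secondary remark on (1): the paper does not attempt to show each $x^{p}y^{q}$ non-zero directly, but first reduces to the single extremal case $x^{n-i}y^{i-1}\neq 0$ via the factorisation $x^{n-i}y^{i-1}=x^{n-i-p}y^{i-1-q}\cdot x^{p}y^{q}$. Only the situation where both endpoints lie in $\mathcal{O}_{\ell}$ then requires a genuine check that the explicit chain map is not null-homotopic; your remark that ``a non-zero monomial survives'' is not sufficient there, since one must also rule out a homotopy, and the paper does this by an explicit linear-algebra computation.
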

\begin{proof}
Fix a vertex $(i,r)\in\mathbb{T}_{0}$.

(1) Let $f=z_{1}z_{2}\cdots z_{k}$ be a morphism in $\mathcal{T}$ with $z_{1},z_{2},\ldots, z_{k}\in \mathsf{Z}$ and codomain $T(i,r)$.
First we claim that if $x(f)>n-i$, then $f=0$.
For $y(f)=0$, this is clear.
In the following, we assume $y(f)\in [1,i-1]$.
By repeated use of Lemma \ref{lem:rel-rule}, we may assume the first $n-i+2$ terms of $f$ as $f=x^{n-i}yx\cdots$.
Since it follows from Lemma \ref{lem:con-rule}(4) that 
\begin{align}
x^{n-i}yx=x_{i,r}\cdots x_{n-1,r}y_{n,r}x_{n-1,r+1}=0, \notag
\end{align} 
we have $f=0$.
Similarly, we obtain that if $y(f)>i-1$, then $f=0$.
Hence the ``if'' part follows.

Next we prove the ``only if'' part.
Let $p:=x(f)\in [0,n-i]$ and $q:=y(f)\in [0,i-1]$.
By repeatedly using Lemma \ref{lem:rel-rule}, we can write $f=x^{p}y^{q}$.
It is enough to show $x^{n-i}y^{i-1}\neq 0$.
Indeed, we assume that it is true.
If $x^{p}y^{q}=0$ holds for some $p\in [0,n-i]$ and $q\in [0,i-1]$, then we have $x^{n-i}y^{i-1}=0$, a contradiction. 
We show $x^{n-i}y^{i-1}\neq 0$.
By the symmetry of the quiver, we may assume $i\in [1,\frac{n+1}{2}]$.
If $i\neq \ell$ (or equivalently $(i,r)\notin\mathcal{O}_{\ell}$ and $(n-i+1,r+i-1)\notin\mathcal{O}_{\ell}$), then we have $x^{n-i}y^{i-1}=(0,a^{n-i}b^{i-1})$.
By Lemma \ref{lem:comb-property}(1), we obtain $a^{n-i}b^{i-1}\neq 0$, and hence $x^{n-i}y^{i-1}\neq 0$.
On the other hand, if $i=\ell$ (or equivalently $(i,r)\in\mathcal{O}_{\ell}$ and $(n-i+1,r+i-1)\in\mathcal{O}_{\ell}$), then we have a commutative diagram
\begin{align}
\xymatrix{
P(n-i+1,r+i)\ar[r]^-{\left[\begin{smallmatrix} a \\ -b\end{smallmatrix}\right]}\ar[d]^{0}&{\begin{matrix}P(n-i,r+i)\\\oplus\\ P(n-i+2,r+i-1)\end{matrix}}\ar[d]^-{\left[\begin{smallmatrix} 0 & 0\\ \alpha& 0\end{smallmatrix}\right]}\\
P(i,r+1)\ar[r]^-{\left[\begin{smallmatrix} a \\ -b\end{smallmatrix}\right]}&{\begin{matrix}P(i-1,r+1)\\\oplus\\ P(i+1,r)\end{matrix}}\\
}\notag
\end{align}
where $\alpha:= a_{i+1,r}\cdots a_{n-1,r}b_{n,r}\cdots b_{n-i+1,r+i-1}$.
Suppose to the contrary that $x^{n-i}y^{i-1}=0$, that is, there exists a morphism $\left[\begin{smallmatrix}h_{1}&h_{2}\end{smallmatrix}\right]: P(n-i,r+i)\oplus P(n-i+2,r+i-1)\to P(i,r+1)$ such that
\begin{align}
h_{1}a_{n-i,r+i}-h_{2}b_{n-i+2,r+i-1}=0\label{seq:rel1}\\
\left[\begin{smallmatrix}a_{i-1,r+1}\\-b_{i+1,r}\end{smallmatrix}\right]\left[\begin{smallmatrix}h_{1}&h_{2}\end{smallmatrix}\right]=\left[\begin{smallmatrix}0&0\\\alpha&0\end{smallmatrix}\right]\label{seq:rel2}.
\end{align}
By Lemma \ref{lem:comb-property}, we can write
\begin{align}
&h_{1}=k_{1}a_{i,r+1}\cdots a_{n-2,r+1}b_{n-1,r+1}\cdots b_{n-i+2,r+i-2}b_{n-i+1,r+i-1}\notag\\
&h_{2}=k_{2}a_{i,r+1}\cdots a_{n-2,r+1}b_{n-1,r+1}\cdots b_{n-i+2,r+i-2}a_{n-i+1,r+i-1}\notag
\end{align}
with $k_{1},k_{2}\in K$.
By \eqref{seq:rel1}, we have 
\begin{align}
(k_{1}-k_{2})a_{i,r+1}\cdots a_{n-1,r+1}b_{n,r+1}\cdots b_{n-i+2,r+i-1}=0.\notag
\end{align}
Thus $k_{1}=k_{2}$ holds.
On the other hand, by \eqref{seq:rel2}, we obtain 
\begin{align}
k_{2}a_{i-1,r+1}\cdots a_{n-1,r+1}b_{n,r+1}\cdots b_{n-i+3,r+i-2}=a_{i-1,r+1}h_{2}=0.\notag
\end{align}
Hence $k_{1}=k_{2}=0$.
This implies $0=-b_{i+1,r}h_{1}=\alpha\neq 0$, a contradiction.

(2) We show that $\mathbb{B}$ gives a basis of $K$-vector space 
\begin{align}
\displaystyle \Hom_{\mathcal{T}}(\mu_{X_{\ell}}(A),T(i,r))=\bigoplus_{(j,s)\in\mathbb{T}_{0}}\Hom_{\mathcal{T}}(T(j,s), T(i,r)).\notag
\end{align}
By Lemma \ref{lem:rel-rule} and (1), $\mathbb{B}$ generates $\Hom_{\mathcal{T}}(\mu_{X_{\ell}}(A),T(i,r))$.
In the following, we show that $\mathbb{B}$ is a linear independent.
Suppose to contrary that $\mathbb{B}$ is not linear independent.
Then there are 
$\emptyset \ne V\subset [0,n-i]\times[0,i-1]$ and $\alpha_{v}\in K\setminus\{0\}$ ($\forall v\in V$) such that 
\begin{align}
\sum_{(p,q)\in V} \alpha_{(p,q)}\cdot x^{p}y^{q}=0. \notag
\end{align}
We may assume that $x^{p}y^{q}:T(j,s)\to T(i,r)$ for each $(p,q)\in V$.
Then we can choose $(p_{0}, q_{0})\in V$ such that $p_{0}<p$ and $q_{0}<q$ hold for each $(p,q)\in V\setminus\{(p_{0},q_{0})\}$.
Hence, by Lemma \ref{lem:rel-rule}, we can write  
\begin{align}
\sum_{(p,q)\in V} \alpha_{(p,q)}\cdot x^{p}y^{q}=\alpha_{(p_{0},q_{0})}x^{p_{0}}y^{q_{0}} \left(1+z\right)\ \left(z\in \rad_{\mathcal{T}}\left(T(i,r),T(i,r)\right)\right). \notag
\end{align}
This means $x^{p_{0}}y^{q_{0}}=0$.
In particular, we have
\begin{align}
x^{n-i}y^{i-1}=x^{n-i-p_{0}}y^{i-1-q_{0}}x^{p_{0}}y^{q_{0}}=0. \notag
\end{align}

(3) This follows from (2) and Lemma \ref{lem:comb-property}(2).
\end{proof}

Now we are ready to prove Proposition \ref{prop:derived-class}.
\begin{proof}[Proof of Proposition \ref{prop:derived-class}]
By Lemmas \ref{lem:local-str}(4) and \ref{lem:dim-end-mut}, there exists a surjective map $\Phi:K\mathbb{T}_{n,m}\to \End_{\mathcal{T}}(\mu_{X_{\ell}}(A))$, which induces a surjective map $A\to \End_{\mathcal{T}}(\mu_{X_{\ell}}(A))$.
Moreover, it follows from Lemma \ref{lem:dim-end-mut} that $\dim_{K}A=\dim_{K}\End_{\mathcal{T}}(\mu_{X_{\ell}}(A))$.
Hence we obtain $A\cong \End_{\mathcal{T}}(\mu_{X_{\ell}}(A))$.
\end{proof}

\subsection*{Acknowledgements}
The authors are deeply grateful to Osamu Iyama for informing them of Abe--Hoshino's work and suggesting an abstract construction of $\widetilde{A}$.

\end{document}